\newtheorem{theorem}{Theorem}[section]
\newtheorem{lemma}[theorem]{Lemma}
\newtheorem*{conjecture*}{Conjecture}
\theoremstyle{definition}
\theoremstyle{remark}
\newtheorem*{remark*}{remark}
\author{Runbo Li}
\address{International Curriculum Center, The High School Affiliated to Renmin University of China, Beijing, China}
\email{carey.lee.0433@gmail.com}
\title[]{Remarks on additive representations of natural numbers}
\subjclass[2020]{11P32, 11N35, 11N36} 
\keywords{Prime, Goldbach-type problems, Sieve, Application of sieve method}
\begin{document}
	
\begin{abstract}
For two relatively prime square-free positive integers $a$ and $b$, we study integers of the form $a p+b P_{2}$ and give a new lower bound for the number of such representations, where $a p$ and $b P_{2}$ are both square-free, $p$ denote a prime, and $P_{2}$ has at most two prime factors. We also consider some special cases where $p$ is small, $p$ and $P_2$ are within short intervals, $p$ and $P_2$ are within arithmetical progressions and a Goldbach-type upper bound result. Our new results generalize and improve previous results.
\end{abstract}

\maketitle


\tableofcontents

\section*{Remarks on the 2025 version}
This preprint was completed by the author in 2023/2024. After that, the author got a series of improvements on this topic, see \cite{LRB1733} in 2024 and \cite{LRB197} in 2025. However, as a completed preprint, we decided not to add any new things and keep the main results unchanged in this preprint. The only new result presented in this version is a previously unpublished result $0.9409$ that
was proved by the author in Feb 2024.

\section{Introduction}
Let $N_e$ denote a sufficiently large even integer, $p$ and $q$, with or without subscript, denote prime numbers, and let $P_{r}$ denote an integer with at most $r$ prime factors counted with multiplicity. For each $N_e \geqslant 4$ and $r \geqslant 2$, we define 
\begin{equation}
D_{1,r}(N_e):= \left|\left\{p : p \leqslant N_e, N_e-p=P_{r}\right\}\right|.
\end{equation}

In 1966 Chen \cite{Chen1966} announced his remarkable Chen's theorem: let $N_e$ be a sufficiently large even integer, then
\begin{equation}
D_{1,2}(N_e) \geqslant 0.67\frac{C(N_e) N_e}{(\log N_e)^2}
\end{equation}
where
\begin{equation}
C(N_e):=\prod_{\substack{p \mid N_e \\ p>2}} \frac{p-1}{p-2} \prod_{p>2}\left(1-\frac{1}{(p-1)^{2}}\right).
\end{equation}
and the detail was published in \cite{Chen1973}. The original proof of Chen was simplified by Pan, Ding and Wang \cite{PDW1975}, Halberstam and Richert \cite{HR74}, Halberstam \cite{Halberstam1975} and Ross \cite{Ross1975}. As Halberstam and Richert indicated in \cite{HR74}, it would be interesting to know whether a more elaborate weighting procedure could be adapted to the purpose of (2). This might lead to numerical improvements and could be important. Chen's constant 0.67 was improved successively to
$$
0.689,\ 0.7544,\ 0.81,\ 0.8285,\ 0.836,\ 0.867,\ 0.899
$$
by Halberstam and Richert \cite{HR74} \cite{Halberstam1975}, Chen \cite{Chen1978_1} \cite{Chen1978_2}, Cai and Lu \cite{CL2002}, Wu \cite{Wu2004}, Cai \cite{CAI867} and Wu \cite{Wu2008} respectively.

In 1990, Wu \cite{WU90} generalized Chen's theorem and showed that
\begin{equation}
D_{1,r}(N_e) \geqslant 0.67 \frac{C(N_e) N_e}{(\log N_e)^2} (\log \log N_e)^{r-2}. 
\end{equation}
Kan \cite{Kan1} also proved the similar result in 1991:
\begin{equation}
D_{1,r}(N_e) \geqslant \frac{0.77}{(r-2)!} \frac{C(N_e) N_e}{(\log N_e)^2} (\log \log N_e)^{r-2},
\end{equation}
which is better than Wu's result when $r=3$. Kan \cite{Kan2} proved the more generalized theorem in 1992:
\begin{equation}
D_{s,r}(N_e) \geqslant \frac{0.77}{(s-1)!(r-2)!} \frac{C(N_e) N_e}{(\log N_e)^2} (\log \log N_e)^{s+r-3},
\end{equation}
where $s \geqslant 1$,
\begin{equation}
D_{s,r}(N_e):= \left|\left\{P_s : P_s \leqslant N_e, N_e-P_s=P_{r}\right\}\right|.
\end{equation}

Furthermore, for two relatively prime square-free positive integers $a$ and $b$, let $N$ denote a sufficiently large integer that is relatively prime to both $a$ and $b$, $a,b < N^{\varepsilon}$ and let $N$ be even if $a$ and $b$ are both odd. Let $R_{a,b}(N)$ denote the number of primes $p$ such that $ap$ and $N-ap$ are both square-free, $b \mid (N-ap)$, and $\frac{N-ap}{b}=P_2$. In 1976, Ross [\cite{RossPhD}, Chapter 3] got a similar result without the square-free restrictions on $ap$ and $N-ap$. In 2023, Li \cite{LIHUIXI} established that
\begin{equation}
R_{a, b}(N) \geqslant 0.68 \frac{C(abN) N}{a b(\log N)^{2}}.
\end{equation}

In this paper, we improve the result by using a delicate sieve process similar to that of \cite{CAI867} and prove that
\begin{theorem}\label{t1}
$$
R_{a, b}(N) \geqslant 0.8671\frac{C(abN) N}{a b(\log N)^{2}}.
$$
\end{theorem}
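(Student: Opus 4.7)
The plan is to follow the elaborate weighted sieve construction of Cai~\cite{CAI867} (which yielded the constant $0.867$ in classical Chen's theorem) and adapt it to the twisted setting in which the summands are $ap$ and $bP_2$ rather than $p$ and $P_2$. I would fix parameters $\tfrac{1}{10} < \alpha < \beta < \tfrac{1}{3}$ in Cai's style, take the level of distribution $D = (N/(ab))^{1/2-\ve}$ and the sifting parameter $z_0 = (N/(ab))^{\alpha}$, and form the sifting sequence
$$
\cA = \bigl\{(N-ap)/b : p \leqslant N/a,\ (p,abN) = 1,\ b \mid N-ap,\ \mu^{2}(ap\,(N-ap)) = 1\bigr\}.
$$
A Chen--Cai weighted inequality then expresses $R_{a,b}(N)$ from below as $S(\cA, z_0)$ minus certain Buchstab iterates $\sum_{p_1} S(\cA_{p_1}, p_1)$ and $\sum_{p_1,p_2} S(\cA_{p_1 p_2}, p_2)$ over $\alpha$-, $\beta$-, $\gamma$-ranges, plus a three-dimensional ``switching'' contribution penalizing elements of $\cA$ with three or more prime factors.

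For the main term $S(\cA, z_0)$ I would apply the Jurkat--Richert lower-bound linear sieve at level $D$, producing $f(s_0) C(abN) N/(ab(\log N)^{2})$ plus admissible error; the square-free conditions are absorbed by M\"obius inversion and perturb only the local densities. The two-dimensional subtracted pieces are bounded above by the upper-bound linear sieve with the function $F$. The crucial step is the three-dimensional sum, where Chen's switching principle interchanges the roles of the outer prime and the innermost $P_2$-factor: one rewrites $\#\{N-ap = bq_1 q_2 q_3\}$ as a sieve problem on the dual sequence $\cB$ of integers $(N - bq_1 q_2 n)/a$ with $n$ to be sifted, and applies the upper-bound linear sieve there. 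All bilinear error terms live on moduli at most $abD^{2} \leqslant N^{1-\ve/2}$ and are handled by the Bombieri--Vinogradov theorem, which is where the hypothesis $a, b < N^{\ve}$ is essential. A final numerical optimization of $\alpha$, $\beta$, $\gamma$ and the weight coefficients then delivers the constant $0.8671$, a slight refinement of the $0.867$ in~\cite{CAI867}.

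The main obstacle I anticipate is not the sieve apparatus itself, but propagating uniformly in $a, b < N^{\ve}$ the joint conditions $\mu^{2}(ap) = \mu^{2}(N-ap) = 1$ and $b \mid N - ap$ through every step of the construction without degrading either the sieve main term or the bilinear error terms. Concretely, one needs the density $\omega(d)$ defined by $\#\{n \in \cA : d \mid n\} \approx \omega(d) |\cA|/d$ to be multiplicative with local factors matching those encoded by $C(abN)$, and the level-of-distribution estimate
$$
\sum_{\substack{d \leqslant D \\ (d, abN) = 1}} \mu^{2}(d) \bigl| \#\{n \in \cA : d \mid n\} - \omega(d) |\cA|/d \bigr| \ll \frac{|\cA|}{(\log N)^{3}}
$$
must hold uniformly in this range of $a$ and $b$. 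Once these uniformity statements are verified, the combinatorial/numerical optimization reduces to that of~\cite{CAI867}, with a small reoptimization of the weight parameters yielding the claimed constant $0.8671$.
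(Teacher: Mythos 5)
Your plan is essentially the route the paper takes: adapt Cai's weighted sieve decomposition from \cite{CAI867}, handle the subtracted higher-dimensional pieces by Chen's switching principle and the linear upper-bound sieve on a dual sequence, control bilinear error terms by Bombieri--Vinogradov (where $a,b<N^\ve$ is indeed what keeps the shifted moduli in range), and recover the same numerical constant. One cosmetic simplification the paper uses that you might prefer: rather than inserting $\mu^2(ap(N-ap))=1$ directly into the sifting sequence and invoking M\"obius inversion, the paper defines $\mathcal{A}_1$ with a congruence $p\equiv Na_{b^2}^{-1}+kb\pmod{b^2}$, $(k,b)=1$, which forces $(\tfrac{N-ap}{b},b)=1$; since $a,b$ are square-free and $(p,a)=1$, the square-free requirements on $ap$ and $N-ap$ then hold automatically for all but $O(N^{1/2})$ of the surviving elements, so they never enter the density function $\omega$ at all.
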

It is easy to see that when we take $a=1$ and $b=1$, Theorem~\ref{t1} implies Cai's result on Chen's theorem [\cite{CAI867}, Theorem 1]; when we take $a=1$ and $b=2$, Theorem~\ref{t1} improves Li's result related to the Lemoine's conjecture [\cite{LiHuixi2019}, Theorem 1].
When we take $a=q_1 q_2 \cdots q_s$ and $b=q^\prime_1 q^\prime_2 \cdots q^\prime_r$ where $q, q^\prime$ denote prime numbers satisfy
$$
s,r\geqslant 1, \quad q_i,q^\prime_j < N^{\varepsilon}, \quad 
(q_i, N)=(q^\prime_j, N)=1\ \operatorname{for\ every}\ 1 \leqslant i \leqslant s,1 \leqslant j \leqslant r,
$$
Theorem~\ref{t1} generalizes and improves the previous results of Kan [\cite{Kan1}, Theorem 2] [\cite{Kan2}, Theorem 2] and Wu [\cite{WU90}, Theorems 1 and 2]. Clearly one can modify our proof of Theorem~\ref{t1} to get a similar lower bound on the twin prime version. For this, we refer the interested readers to Ross's PhD thesis \cite{RossPhD} and [\cite{Opera}, Sect. 25.6], as well as \cite{KanJNT}, \cite{Kan1996} and \cite{KS1996} for some interesting applications.

Chen's theorem with small primes was first studied by Cai \cite{Cai2002}. For $0<\theta \leqslant 1$, we define 
\begin{equation}
D_{1,r}^{\theta}(N_e):= \left|\left\{p : p \leqslant N_e^{\theta}, N_e-p=P_{r}\right\}\right|.
\end{equation}
Then it is proved in \cite{Cai2002} that for $0.95 \leqslant \theta \leqslant 1$, we have
\begin{equation}
D_{1,2}^{\theta}(N_e) \gg \frac{C(N_e) N_e^{\theta}}{(\log N_e)^2}.
\end{equation}
Cai's range $0.95 \leqslant \theta \leqslant 1$ was extended successively to $0.945 \leqslant \theta \leqslant 1$ in \cite{CL2011} and to $0.941 \leqslant \theta \leqslant 1$ in \cite{Cai2015}.

In this paper, we generalize their results to integers of the form $ap+bP_2$. Let $R_{a,b}^{\theta}(N)$ denote the number of primes $p\leqslant N^{\theta}$ such that $ap$ and $N-ap$ are both square-free, $b \mid (N-ap)$, and $\frac{N-ap}{b}=P_2$. In 1976, Ross [\cite{RossPhD}, Chapter 5] got a similar result without the square-free restrictions on $ap$ and $N-ap$ and showed that $0.959 \leqslant \theta \leqslant 1$ is admissible. Now by using a delicate sieve process similar to that of \cite{Cai2015}, we prove that
\begin{theorem}\label{t2}
For $0.9409 \leqslant \theta \leqslant 1$ we have
$$
R_{a, b}^{\theta}(N) \gg \frac{C(abN) N^{\theta}}{a b(\log N)^{2}}.
$$
\end{theorem}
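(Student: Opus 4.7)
The plan is to adapt the delicate weighted-sieve argument of Cai~\cite{Cai2015}, which handles the case $a=b=1$ and $\theta\geqslant 0.941$, to the general coefficients $a,b$, in exactly the same way that Theorem~\ref{t1} was obtained from~\cite{CAI867}. The sifting sequence is
\[
\cA = \left\{\frac{N-ap}{b} : p\leqslant N^{\theta},\ (ap,N)=1,\ b\mid N-ap,\ ap\text{ and }N-ap\text{ both square-free}\right\},
\]
and $R_{a,b}^{\theta}(N) = \#\{n\in\cA : n = P_2\}$. I would bound from below a Chen-type weighted count of $P_2$'s in $\cA$, with sifting level $z = N^{\eta}$ for a small $\eta > 0$ and one or two nested intervals $[z,y_1),[y_1,y_2)$ in which multiple prime factors are penalised by $\tfrac12$-weights, chosen so that elements with three or more prime factors in the critical range contribute nonpositively while each genuine $P_2$ contributes a positive amount.

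The main term is then supplied by the Jurkat--Richert linear (and semi-linear) sieves applied to $\cA$. The level of distribution $N^{1/2-\varepsilon}$ is furnished by the Bombieri--Vinogradov theorem for primes in arithmetic progressions, and the hypothesis $a,b<N^\varepsilon$ keeps the error term harmless. The singular series $C(abN)/(ab)$ arises exactly as in the proof of Theorem~\ref{t1} from the joint congruences $b\mid N-ap$ and $(ap,N)=1$, together with the square-freeness conditions, whose failure contributes at most an upper-bound sieve term $O(N^\theta/(\log N)^2)$ absorbed into the error.

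The main obstacle is numerical. Because the effective sifting length drops from $\log N$ to $\theta\log N$, the Jurkat--Richert functions $f,F$ are evaluated at $\theta$-scaled arguments, and Chen's switching estimate for the trilinear contribution of three large prime factors degrades as $\theta$ decreases. Following~\cite{Cai2015}, I would allow the inner weight intervals to slide with $\theta$ and numerically retune them so that the positive contribution from true $P_2$'s continues to dominate the losses in the triple-factor term; this is a finite-dimensional optimisation in a handful of real parameters. A computation parallel to~\cite[\S5]{Cai2015}, but with the singular series $C(abN)/(ab)$ in place of $C(N)$, should yield positivity of the weighted main term for every $\theta\geqslant 0.9409$, with the value $0.9409$ being the threshold at which this balance first fails.
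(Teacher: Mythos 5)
Your high-level strategy — adapt Cai's weighted sieve, apply Chen's switching, thread the singular series $C(abN)/(ab)$ through using $(a,b)=1$ and $a,b<N^{\varepsilon}$ — matches what the paper does. But the claim that ``the level of distribution $N^{1/2-\varepsilon}$ is furnished by the Bombieri--Vinogradov theorem'' and that the remaining obstacle is ``numerical'' misses the central technical difficulty, which the paper flags explicitly and which is precisely what forced Cai to introduce a new ingredient.

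For the set $\mathcal{A}_2$ (primes $p\leqslant N^{\theta}/a$) the relevant level of distribution is $N^{\theta/2-\varepsilon}$, not $N^{1/2-\varepsilon}$, and the switched sets $\mathcal{B}_2$, $\mathcal{C}_2$, $\mathcal{C}_3$ need level $N^{\theta-1/2-\varepsilon}$. The error terms for $S_4'$ and $S_7'$ are handled by Wu's short-interval analogue of Bombieri--Vinogradov (Lemma~\ref{BVshort}/\ref{remark2}), but that lemma only controls coefficient sequences $g(k)$ supported on $k\leqslant x^{\beta}$ with $\beta<(5\theta-3)/2$. In the evaluation of $S_6'$ after the Chen switch, the products $p_1p_2p_3p_4$ run up to about $N^{13/14}$, and for $\theta<34/35\approx 0.9714$ one has $(5\theta-3)/2<13/14$, so no admissible $g(k)$ exists and Lemma~\ref{BVshort} simply does not apply. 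Since $0.9409<0.9714$, the whole range of the theorem lies in the failure zone, and the proof cannot proceed by Bombieri--Vinogradov alone. The paper (following Cai~\cite{Cai2015}) closes this gap with a bespoke mean value theorem for products of large primes in short intervals (Lemma~\ref{newmeanvalue}), proved by a separate dispersion/large-sieve-type argument over the sets $\mathcal{F}_2$, $\mathcal{F}_3$; without that lemma, or an equivalent substitute, the bound on $S_6'$ — and therefore the lower bound on $R_{a,b}^{\theta}(N)$ — cannot be established. Your proposal would need to state and prove (or at least correctly cite) this mean value theorem; treating it as a matter of sliding the weight intervals and re-optimising constants is not sufficient, because the issue is the nonexistence of the error-term estimate, not the size of the resulting main term.
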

For similar results on the twin prime version with small primes, we refer the interested readers to \cite{LiuMingChit}, \cite{XieShenggang}, \cite{Coleman} and \cite{MengXianmeng}.

Chen's theorem in short intervals was first studied by Ross \cite{Ross1978}. For $0<\kappa \leqslant 1$, we define 
\begin{equation}
D_{1,r}(N_e,\kappa):= \left|\left\{p : N_e/2-N_e^{\kappa} \leqslant p,P_r \leqslant N_e/2+N_e^{\kappa}, N_e=p+P_{r}\right\}\right|.
\end{equation}
Then it is proved in \cite{Ross1978} that for $0.98 \leqslant \kappa \leqslant 1$, we have
\begin{equation}
D_{1,2}(N_e,\kappa) \gg \frac{C(N_e) N_e^{\kappa}}{(\log N_e)^2}.
\end{equation}
The constant 0.98 was improved successively to
$$
0.974,\ 0.973,\ 0.9729,\ 0.972,\ 0.971,\ 0.97
$$
by Wu \cite{Wu1993} \cite{Wu1994}, Salerno and Vitolo \cite{SV1993}, Cai and Lu \cite{CL1999}, Wu \cite{Wu2004} and Cai \cite{CAI867} respectively. 

In this paper, we generalize their results to integers of the form $ap+bP_2$. Let $R_{a,b}(N,\kappa)$ denote the number of primes $N/2-N^{\kappa} \leqslant p \leqslant N/2+N^{\kappa}$ such that $ap$ and $N-ap$ are both square-free, $b \mid (N-ap)$ and $\frac{N-ap}{b}=P_2$. In \cite{Ross1978}, Ross mentioned that his method can be used to prove similar results of $R_{a, b}(N,\kappa)$ with $0.98 \leqslant \kappa \leqslant 1$ and a detailed proof was given in [\cite{RossPhD}, Chapter 5]. Now by using a delicate sieve process similar to that of \cite{CAI867}, we prove that
\begin{theorem}\label{t3}
For $0.97 \leqslant \kappa \leqslant 1$ we have
$$
R_{a, b}(N,\kappa) \gg \frac{C(abN) N^{\kappa}}{a b(\log N)^{2}}.
$$
\end{theorem}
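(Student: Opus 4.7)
\textbf{Proof plan for Theorem \ref{t3}.}
The plan is to adapt the short-interval weighted sieve of Cai \cite{CAI867} to the generalized setting of Theorem \ref{t1}, working with integers of the form $ap + bP_2$. First I set up the sifting structure: let
\begin{equation*}
\cA = \set{\tfrac{N-ap}{b} : N/2 - N^\kappa \leqslant p \leqslant N/2 + N^\kappa,\ p\ \on{prime},\ ap\ \on{and}\ N-ap\ \on{square\mbox{-}free}},
\end{equation*}
and let $\mathcal{P}$ be the set of primes coprime to $abN$. For squarefree $d$ coprime to $abN$, the joint congruence $bd \mid N-ap$ together with $(ap,abN)=1$ produces a density function whose singular product assembles into $C(abN)$, exactly as in \cite{LIHUIXI} and in the construction for Theorem \ref{t1}; the main term $X$ is a short-interval prime count of size asymptotic to $2N^\kappa/\log N$.

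Next I introduce a Chen-Cai style weighted counting function $W$ assembled from Jurkat-Richert's lower bound sieve plus correction sums, calibrated so that $W > 0$ yields $\gg C(abN)N^\kappa/(ab(\log N)^2)$ elements of $\cA$ with at most two prime factors. Buchstab's identity is applied twice to the main sifting function $S(\cA;\mathcal{P},(abN)^{1/2})$, with the same cut-off points in $z$ as in \cite{CAI867}, so that after rescaling the numerical evaluation reduces to the same convolution integrals and weight optimizations that produced $0.97$ for Cai in the classical case.

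The arithmetic input is a Bombieri-Vinogradov type estimate in short intervals,
\begin{equation*}
\sum_{q \leqslant Q}\max_{(l,q)=1}\left|\sum_{\substack{N/2-N^\kappa \leqslant p \leqslant N/2+N^\kappa \\ p \equiv l \pmod{q}}}1 - \frac{2N^\kappa}{\phi(q)\log N}\right| \ll \frac{N^\kappa}{(\log N)^A},
\end{equation*}
valid up to $Q = N^{\kappa-1/2-\ve}$ by the work of Perelli-Pintz and Timofeev. Combined with Chen's switching principle, this supplies the level of distribution needed to control the Buchstab remainders for $\kappa \geqslant 0.97$. The parameters $a$ and $b$ are harmless because they satisfy $a,b<N^\ve$, so they may be absorbed into the sieve moduli at negligible cost; the squarefree restrictions on $ap$ and $N-ap$ are removed by a standard M\"obius inversion whose tail contributes to the error.

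The main obstacle is confirming that Cai's elaborate weight optimization from \cite{CAI867} continues to deliver a positive lower bound once the multiplicative factor $1/(ab)$ enters the main terms, once $C(N)$ is replaced by $C(abN)$, and once the level $Q$ is constrained by the short-interval mean-value theorem rather than the classical Bombieri-Vinogradov theorem. I expect $0.97$ to be preserved because Cai's weights depend only on the structural parameters of the sieve decomposition—not on the arithmetic of $N$—and the level of distribution available for $\kappa \geqslant 0.97$ matches what is needed to carry out the estimates in \cite{CAI867}. The remaining work is a routine but lengthy verification that each bilinear or Type-I error arising in the generalized Buchstab decomposition fits under the level provided by the short-interval estimate above.
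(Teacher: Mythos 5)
Your overall strategy — transplant Cai's short-interval weighted sieve from \cite{CAI867}/\cite{CL1999} into the $ap+bP_2$ setting with the density/singular-series bookkeeping from Theorem~\ref{t1} — is indeed the route the paper takes, and the claim that the $1/(ab)$ and $C(abN)$ factors do not disturb the weight optimization is correct. However, the arithmetic input you invoke is not the right one, and you omit the single numerical check that actually makes $\kappa\geqslant 0.97$ work.

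The estimate you cite (a short-interval Bombieri--Vinogradov theorem for $\pi(x+h;q,l)-\pi(x;q,l)$ due to Perelli--Pintz and Timofeev) controls primes in short intervals, but after Chen's switching principle the error terms are no longer of that shape: one is led to sums $\sum_{k} g(k)\bigl(\pi(\cdot;k,q,l)-\frac{1}{\varphi(q)}\pi(\cdot;k,1,1)\bigr)$ where $g$ is the indicator of a set like $\mathcal{E}$ or $\mathcal{F}$ (products $p_1p_2$, or $mp_1p_2p_4$). An unweighted BV theorem does not dispatch these. What is actually needed is Wu's weighted short-interval mean value theorem \cite{Wu1993} (Lemma~\ref{BVshort} together with Lemma~\ref{remark2} in the paper), which allows an arbitrary $g(k)$ with moderate $L^2$-growth, supported on $k\leqslant x^\beta$, at the price of the constraint $\beta<\tfrac{5t-3}{2}$. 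This constraint is exactly where the threshold $0.97$ enters: the switching step for $S_6$ requires $\beta$ up to $\tfrac{12.2}{13.2}\approx 0.9242$ (the top of the range of $\mathcal{F}_1$), and one needs $\tfrac{5\kappa-3}{2}\geqslant\tfrac{12.2}{13.2}$, which at $\kappa=0.97$ gives $0.925>0.9242$. This is the inequality the paper highlights as the reason Theorem~\ref{t3} can be handled with Lemma~\ref{BVshort} alone, whereas for Theorem~\ref{t2} the analogous constraint $\tfrac{5\theta-3}{2}<\tfrac{13}{14}$ fails and forces the introduction of the new mean value theorem of Lemma~\ref{newmeanvalue}. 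Your proposal neither cites the weighted short-interval theorem nor performs this check, so as written it does not close the argument; filling in these two points, and constructing the short-interval analogues of the sets $\mathcal{B},\mathcal{C},\mathcal{E},\mathcal{F}$ exactly as in \S2 and \S6, yields the paper's proof.
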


From our Theorems~\ref{t1}--\ref{t3}, it can be seen that the first aim of this paper is to improve the old results on the natural numbers of the form $ap+bP_2$ to be consistent with or better than the results on the even numbers of the form $p+P_2$. Before our work, all results on this topic are weaker than those of binary Goldbach problem. For Theorem~\ref{t1}, the constants 0.608 in \cite{RossPhD} and 0.68 in \cite{LIHUIXI} are smaller than 0.867 in \cite{CAI867}. For Theorems~\ref{t2}--\ref{t3}, Ross's exponent 0.959 and 0.98 are again weaker than those in \cite{Cai2015} and \cite{CAI867}.

Chen's theorem in arithmetical progressions was first studied by Kan and Shan \cite{KanShan}. If we define 
\begin{equation}
D_{1,r}(N_e,c,d):= \left|\left\{p : p \leqslant N_e, p \equiv d(\bmod c), (c,d)=1, (N_e-p,c)=1, N_e-p=P_{r}\right\}\right|,
\end{equation}
then it is proved in \cite{KanShan} that for $c \leqslant (\log N_e)^{C}$ where $C$ is a positive constant, we have
\begin{equation}
D_{1,r}(N_e,c,d) \geqslant \frac{0.77}{(r-2)!}\prod_{\substack{p \mid c \\ p \nmid N_e\\ p>2}} \left(\frac{p-1}{p-2}\right)\frac{C(N_e) N_e}{\varphi(c)(\log N_e)^2} (\log \log N_e)^{r-2},
\end{equation}
where $\varphi$ denote the Euler's totient function. Clearly their result (14) generalized the previous results (2) and (5). They also got the similar results on the twin prime version (or even the "safe prime" version, see \cite{Kan2004}) and Lewulis \cite{lewulis} considered the similar problem. However, their results are only valid when $c$ is "small". In 1999, Cai and Lu \cite{CL1999_2} considered this problem with "large" $c$ and proved that for $c \leqslant N_e^{\frac{1}{37}}$, except for $O\left(N_e^{\frac{1}{37}} (\log N_e)^{-A}\right)$ exceptional values, we have
\begin{equation}
D_{1,2}(N_e,c,d) \gg \prod_{\substack{p \mid c \\ p \nmid N_e\\ p>2}} \left(\frac{p-1}{p-2}\right) \frac{C(N_e)N_e}{\varphi(c)(\log N_e)^2}
\end{equation}
and they mentioned that the exponent $\frac{1}{37}$ can be improved to $0.028$. In this paper, we further generalize their results to integers of the form $ap+bP_2$. Let $R_{a,b}(N,c,d)$ denote the number of primes $p \equiv d(\bmod c)$ such that $ap$ and $N-ap$ are both square-free, $b \mid (N-ap)$, and $\frac{N-ap}{b}=P_2$. Then by using a delicate sieve process similar to that of \cite{CAI867}, we prove that
\begin{theorem}\label{t4}
For $c \leqslant (\log N)^{C}$, we have
$$
R_{a,b}(N,c,d) \geqslant 0.8671 \prod_{\substack{p \mid c \\ p \nmid N\\ p>2}} \left(\frac{p-1}{p-2}\right)\frac{C(abN) N}{\varphi(c)ab(\log N)^2}.
$$
\end{theorem}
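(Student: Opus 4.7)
The plan is to adapt the delicate weighted sieve used to prove Theorem~\ref{t1} (which follows Cai's elaborate weighting scheme from \cite{CAI867}) so as to additionally count those primes lying in the arithmetic progression $p \equiv d \pmod c$. Since the modulus $c \leqslant (\log N)^{C}$ is extremely small compared with the sifting level, no qualitative change to the weighting is required: the sieve operates at essentially the same level, the main term acquires a predictable extra factor from the congruence, and all error terms remain admissible after a single application of the Bombieri--Vinogradov theorem with modulus enlarged by $c$.

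First, I would replace the sifting sequence $\cA = \set{N - ap : p \leqslant N/a,\ (ap, bN) = 1,\ \mu(ap)^2 = 1}$ used in the proof of Theorem~\ref{t1} with its restriction $\cA(c,d) = \set{N - ap \in \cA : p \equiv d \pmod c,\ (N - ap, c) = 1}$. The hypotheses $(c,d)=1$ and $(c, N-ap)=1$ prevent any local obstruction modulo $c$. For each prime $\ell \nmid c$, the local density of $\cA(c,d)$ agrees with that of $\cA$, so the local factor at $\ell$ remains exactly the one appearing in $C(abN)$. For each prime $\ell \mid c$ with $\ell \nmid abN$ and $\ell > 2$, the congruence $p \equiv d \pmod \ell$ isolates a single residue class (rather than removing two as in $C(abN)$), and a direct comparison of local densities shows that the factor $1 - \tfrac{2}{\ell}$ in $C(abN)$ is replaced by $\tfrac{\ell-2}{\ell-1}$, giving a ratio of $\tfrac{\ell-1}{\ell-2}$. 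Combined with the global density $1/\varphi(c)$ of the progression $d \pmod c$ among primes, this produces precisely the factor $\prod_{\ell \mid c,\ \ell \nmid N,\ \ell > 2}\tfrac{\ell-1}{\ell-2} / \varphi(c)$ in front of the main term of Theorem~\ref{t1}, matching exactly the corresponding factor in the Kan--Shan result~(14).

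For the error terms I would run the same large-sieve/Bombieri--Vinogradov estimates used in Theorem~\ref{t1}, but with each modulus $q$ replaced by $qc$. Since $c = (\log N)^{O(1)}$, this reduces the effective level of distribution by at most a logarithmic factor, which is completely absorbed by the $(\log N)^{-A}$ savings of Bombieri--Vinogradov. Every weighted partial sum in Cai's argument therefore admits the same asymptotic evaluation as in the proof of Theorem~\ref{t1}, and the numerical constant $0.8671$ transfers to the conclusion without any loss.

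The main obstacle is purely one of careful bookkeeping: one must verify that at every stage of Cai's multi-step weighting procedure --- the switch trick, Selberg's $\Lambda^2$ upper bound, and the removal of terms where the $P_2$ factor contains a small prime --- the extra factor $\prod_{\ell \mid c,\ \ell \nmid N,\ \ell > 2}\tfrac{\ell-1}{\ell-2}/\varphi(c)$ is pulled out cleanly and uniformly across both the $ap$ and $bP_2$ sides, with the constraint $p \equiv d \pmod c$ creating neither a new critical configuration nor a larger error than the one already tolerated in the proof of Theorem~\ref{t1}.
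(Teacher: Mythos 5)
Your proposal matches the paper's own approach in Section 8: the paper takes $X_{\mathcal{A}_4} \sim X_{\mathcal{A}_1}/\varphi(c)$, sieves over the reduced prime set $\mathcal{P}' = \{p : (p,Nc)=1\}$ so that $W'(z) = \prod_{p\mid c,\, p\nmid N,\, p>2}\tfrac{p-1}{p-2}\cdot W(z)$, and controls error terms by modifying Lemmas~\ref{l3}--\ref{remark1} following Kan--Shan, which is exactly the plan you describe. The only slight imprecision is in your local-density bookkeeping for $\ell\mid c$: the relevant sieve factor being dropped is $1-\omega(\ell)/\ell = \tfrac{\ell-2}{\ell-1}$ rather than $1-\tfrac{2}{\ell}$, but the resulting ratio $\tfrac{\ell-1}{\ell-2}$ that you write down is correct.
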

\begin{theorem}\label{t41}
For $c \leqslant N^{0.028}$, except for $O\left(N^{0.028} (\log N)^{-A}\right)$ exceptional values, we have
$$
R_{a,b}(N,c,d) \gg \prod_{\substack{p \mid c \\ p \nmid N\\ p>2}} \left(\frac{p-1}{p-2}\right)\frac{C(abN) N}{\varphi(c)ab(\log N)^2}.
$$
\end{theorem}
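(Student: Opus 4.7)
The plan is to combine the method of Cai and Lu \cite{CL1999_2}, which handles the pure case $a=b=1$ for moduli as large as $N_{e}^{0.028}$ outside a thin exceptional set, with the framework developed in the proof of Theorem~\ref{t1} that accommodates the weights $a,b$ together with the square-free restrictions on $ap$ and $N-ap$. As a first step I would set up the Chen-style weighted sum
\[
W \;=\; \sum_{\substack{p\leqslant X,\ p\equiv d\,(\bmod c) \\ \mu^{2}(ap)\mu^{2}(N-ap)=1 \\ b\mid N-ap}} w\!\left(\frac{N-ap}{b}\right),
\]
where $X \asymp N$ is the sieving parameter and $w(n)$ is the usual Richert/Chen weight (as in Theorem~\ref{t1}), positive precisely when $n$ has at most two prime factors. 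Lower bounding $W$ by the linear sieve of Jurkat--Richert together with Chen's switching principle exactly as in the proof of Theorem~\ref{t1} reduces the problem to controlling, for square-free sieve moduli $q\leqslant D \asymp N^{1/2-\varepsilon}$ with $(q,abcN)=1$, the counts
\[
\pi(X;\,cbq,\,d_{q}) \;=\; \bigl|\{\,p\leqslant X:\, p\equiv d\,(\bmod c),\ p\equiv d_{q}\,(\bmod bq)\,\}\bigr|,
\]
with $d_{q}$ determined by $a d_{q}\equiv N\,(\bmod bq)$ and the CRT-compatibility of the two progressions.

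Since $c$ can be as large as $N^{0.028}$, the combined modulus $cbq$ can exceed $N^{1/2}$, so the classical Bombieri--Vinogradov theorem is inadequate; instead one invokes a dispersion-type mean-value estimate of the shape established in \cite{CL1999_2}, resting on the Bombieri--Friedlander--Iwaniec method: for any well-factorable $\lambda_{q}$ supported on $q\leqslant D$ with $(q,abcN)=1$, and any fixed $A>0$,
\[
\sum_{c\leqslant N^{0.028}} \max_{(d,c)=1}\left|\,\sum_{q}\lambda_{q}\left(\pi(X;cbq,d_{q})-\frac{\pi(X;c,d)}{\varphi(bq)}\right)\right| \;\ll\; \frac{X}{(\log X)^{A}}.
\]
This is an $L^{1}$ bound in $c$, so by Chebyshev's inequality the inner double sum fails to be $O\bigl(X(\log X)^{-A-1}\bigr)$ for at most $O\bigl(N^{0.028}(\log N)^{-A}\bigr)$ values of $c$, furnishing precisely the exceptional set asserted in Theorem~\ref{t41}. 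The square-free conditions $\mu^{2}(ap)=\mu^{2}(N-ap)=1$ are then inserted by Möbius inversion and truncated at level $N^{\varepsilon}$, and the main term factors into local densities; a short local computation, paralleling the one underlying Theorem~\ref{t4} and \cite{KanShan}, produces the extra factor $\prod_{p\mid c,\,p\nmid N,\,p>2}(p-1)/(p-2)$ as the compatibility condition $(N-ap,c)=1$ interacts with $p\equiv d\,(\bmod c)$.

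The principal obstacle is verifying that the dispersion estimate of \cite{CL1999_2}, proved in the case $a=b=1$, still yields the exponent $0.028$ after the extra moduli $a,b$ and the square-free truncations are introduced. Because $a,b<N^{\varepsilon}$ with $(ab,cN)=1$, the congruence $ap\equiv N\,(\bmod bq)$ can be absorbed into a residue class for $p$ modulo $bq$, enlarging the modulus only by the negligible factor $b$; the well-factorability of the sieve weights and the admissible range of $q$ survive up to factors of size $N^{\varepsilon}$, which are swallowed by the $(\log N)^{-A}$ saving. Once this extension of the mean-value theorem is in place, combining it with the weighted-sieve machinery of Theorem~\ref{t1} delivers the lower bound asserted in Theorem~\ref{t41}, with the factor $1/\varphi(c)$ emerging naturally from the main-term normalization in the Bombieri--Vinogradov-type input.
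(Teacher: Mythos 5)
The route you propose differs from the paper's and contains a gap. The paper reaches Theorem~\ref{t41} through Lemma~\ref{almostBV}, whose proof is considerably simpler than what you describe: the sieve moduli $d$ are restricted to $d \leqslant N^{1/2}(\log N)^{-B}/c$, so that $dc$ never exceeds the classical Bombieri--Vinogradov level; one then bounds the \emph{average} over $c$ of the total error $R_1$ by pulling a divisor function out of $\sum_{c}\sum_{d\leqslant D/c}$ and applying the ordinary Pan--Ding--Wang mean value theorem (Lemma~\ref{l3}), and the exceptional-set statement is Markov's inequality. There is no dispersion, no well-factorable weights, no Bombieri--Friedlander--Iwaniec input. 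The exponent $0.028$ is not the output of any mean-value estimate: it is the largest $\theta$ for which the weighted sieve of Lemma~\ref{l32} still yields a positive constant once the distribution level has been degraded from $N^{1/2}$ to $N^{1/2-\theta}$, i.e.\ the re-optimization Cai and Lu allude to when they say $1/37$ improves to $0.028$ with a sharper sieve; the paper leaves this numerical step implicit.

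Your proposed estimate
$$
\sum_{c\leqslant N^{0.028}}\max_{(d,c)=1}\Bigl|\sum_{q}\lambda_q\bigl(\pi(X;cbq,d_q)-\tfrac{\pi(X;c,d)}{\varphi(bq)}\bigr)\Bigr|\ll X(\log X)^{-A},
$$
with $q$ running up to $N^{1/2-\varepsilon}$ so that $cbq$ overshoots $N^{1/2}$, is strictly beyond what \cite{CL1999_2} proves and what Theorem~\ref{t41} requires, and it is not obviously true. The BFI theorems with well-factorable weights that go past level $N^{1/2}$ hold for a \emph{fixed} residue class, whereas you take a max over all $\varphi(c)$ residues $d$ modulo $c$; the CRT-combined residue modulo $cbq$ is not the reduction of a single fixed integer, so the standard BFI argument does not transfer. (And if level $x^{4/7}$ did apply unconditionally you would obtain $c\leqslant N^{1/14-\varepsilon}$ rather than $N^{0.028}$, which itself indicates that BFI is not where the paper's exponent comes from.) The fix is to discard the dispersion machinery, restrict to moduli $d\leqslant D/c$, use the divisor-trick and Markov argument of Lemma~\ref{almostBV}, and then supply the numerical sieve verification at level $N^{1/2-0.028}$. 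Your handling of the $a,b$ weights, the square-free constraints, and the local factor $\prod_{p\mid c,\,p\nmid N,\,p>2}(p-1)/(p-2)$ (which indeed arises from the modified $W^{\prime}(z)$ in $(92)$) is consistent with the paper and needs no change.
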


Now we combine Theorem~\ref{t4} with Theorems~\ref{t2}--\ref{t3}. Let $R_{a,b}^{\theta}(N,c,d)$ denote the number of primes $p \equiv d(\bmod c)$ such that $p\leqslant N^{\theta}$, $ap$ and $N-ap$ are both square-free, $b \mid (N-ap)$, and $\frac{N-ap}{b}=P_2$. 
And let $R_{a,b}(N,c,d,\kappa)$ denote the number of primes $p \equiv d(\bmod c)$ such that $N/2-N^{\kappa} \leqslant p \leqslant N/2+N^{\kappa}$, $ap$ and $N-ap$ are both square-free, $b \mid (N-ap)$, $\frac{N-ap}{b}=P_2$, and $N/2-N^{\kappa} \leqslant \frac{N-ap}{b} \leqslant N/2+N^{\kappa}$. Then by using a delicate sieve process similar to that of \cite{CAI867} and \cite{Cai2015}, we prove that
\begin{theorem}\label{t42}
For $c \leqslant (\log N)^{C}$ and $0.9409 \leqslant \theta \leqslant 1$, we have
$$
R_{a,b}^{\theta}(N,c,d) \gg \prod_{\substack{p \mid c \\ p \nmid N\\ p>2}} \left(\frac{p-1}{p-2}\right)\frac{C(abN) N^{\theta}}{\varphi(c)ab(\log N)^2}.
$$
\end{theorem}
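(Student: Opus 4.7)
\medskip
\noindent\textbf{Proof proposal.} The plan is to run the weighted sieve used in the proof of Theorem~\ref{t2} (itself modelled on Cai's treatment of the small-prime variant of Chen's theorem in \cite{Cai2015}) but with every sum over the prime variable $p$ restricted to the residue class $p \equiv d \pmod c$. Since $c \leqslant (\log N)^{C}$ is polylogarithmic, the extra congruence is harmless: it simply scales the main density by $1/\varphi(c)$, introduces the local correction $\prod_{p \mid c,\ p \nmid N,\ p>2}\frac{p-1}{p-2}$ in the singular series exactly as in the proof of Theorem~\ref{t4}, and does not erode the admissible exponent $\theta$ at all.

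First I would set up the sifting set $\mathcal{A} = \{N - ap : p \leqslant N^{\theta},\ p \equiv d \pmod c,\ ap \text{ square-free},\ b \mid N-ap\}$ and apply Chen's reversal-of-roles trick together with Iwaniec's switching device, following \cite{Cai2015} line by line. At each stage where Cai sifts a linear form in $p$, one replaces the unrestricted Prime Number Theorem input by the PNT in progressions modulo $c$; this only costs a factor $1/\varphi(c)$ in the main term. The local contributions at primes $\ell \mid c$ are handled as in Theorem~\ref{t4}: for $\ell \nmid N$ with $\ell > 2$ one picks up $\frac{\ell-1}{\ell-2}$ relative to the unrestricted sieve, whereas for $\ell \mid N$ the contribution is absorbed harmlessly into $C(abN)$. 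Collecting these local factors yields the claimed main term
\[
\gg \prod_{\substack{p \mid c \\ p \nmid N \\ p>2}} \frac{p-1}{p-2}\cdot \frac{C(abN)\,N^{\theta}}{\varphi(c)\,ab\,(\log N)^{2}}.
\]

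For the error terms we need a Bombieri--Vinogradov estimate for primes $p \leqslant N^{\theta}$ lying in progressions of modulus $cq$, where $q$ runs over the sieve moduli with $q \leqslant N^{\theta-\varepsilon}$ (this is the level of distribution used in \cite{Cai2015}). Because $c \leqslant (\log N)^{C}$, one has $cq \leqslant N^{\theta - \varepsilon/2}$, so the standard Bombieri--Vinogradov theorem applies verbatim, and the usual summation over the smooth $q$ arising in the Richert--Iwaniec decomposition gives an overall error of $O(N^{\theta}(\log N)^{-A})$, negligible compared with the main term.

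The main obstacle is a bookkeeping one rather than a new analytic input: one must check that the uniformity in $c$ (as an extra modulus running alongside the sieve modulus) is preserved through every application of the switching principle and the bilinear-form estimates in \cite{Cai2015}, in particular through the short Dirichlet polynomial manipulations that restrict $p$ to $[1, N^{\theta}]$. Once this uniformity is verified, the numerical optimisation that produced $\theta \geqslant 0.9409$ in Theorem~\ref{t2} goes through unchanged, since the $c$-dependence enters only as an overall constant factor and never interacts with the parameters that determine the exponent.
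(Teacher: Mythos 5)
Your overall strategy matches what the paper does in Section~8: restrict the weighted sieve of Theorem~\ref{t2} to primes $p\equiv d\pmod c$, sift with $\mathcal{P}'=\{p:(p,Nc)=1\}$ (so the density $X_{\mathcal{A}_5}$ acquires the factor $1/\varphi(c)$ and $W(z)$ acquires the local factor $\prod_{p\mid c,\,p\nmid N,\,p>2}\frac{p-1}{p-2}$), and observe that a polylogarithmic $c$ leaves the level of distribution intact, so the numerics for $\theta\ge 0.9409$ carry over unchanged. Those parts are exactly right.

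The gap is in the error-term paragraph, where two things are off. First, the level of distribution in \cite{Cai2015} and in this paper is not $N^{\theta-\varepsilon}$: the direct sieving of $\mathcal{A}_5$ uses $D_{\mathcal{A}_5}=(N/b)^{\theta/2}(\log(N/b))^{-B}$, and the switched sets $\mathcal{B}_2,\mathcal{C}_2,\mathcal{C}_3$ use level $N^{\theta-1/2}(\log N)^{-B}$, which for $\theta=0.9409$ is roughly $N^{0.44}$. Second, and more seriously, it is not the classical Bombieri--Vinogradov theorem that controls these error terms. The direct error is indeed covered by Lemma~\ref{l3}, but the errors after Chen's switching live in short intervals of length $\asymp N^{\theta}$ near $N$ and need Wu's short-interval mean value theorem (Lemma~\ref{BVshort} and Lemma~\ref{remark2}); moreover, because $\tfrac{5\theta-3}{2}<\tfrac{13}{14}$ for $\theta=0.9409$, the term $S_6'$ is not reachable even by Wu's theorem and requires Cai's specialized mean value theorem for products of large primes over short intervals (Lemma~\ref{newmeanvalue}). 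The paper's actual argument takes exactly these three lemmas and replaces them by analogues carrying the extra fixed modulus $c$, using the method of Kan and Shan \cite{KanShan}; since $c\le(\log N)^{C}$, the modulus $cq$ remains below the relevant thresholds and the modified lemmas hold. Your instinct that a polylogarithmic $c$ is harmless is correct, but the objects to which this observation must be applied are Lemmas~\ref{BVshort}--\ref{newmeanvalue}, not the standard Bombieri--Vinogradov theorem; as stated, the claim that "the standard Bombieri--Vinogradov theorem applies verbatim" with moduli up to $N^{\theta-\varepsilon/2}$ would fail.
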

\begin{theorem}\label{t43}
For $c \leqslant (\log N)^{C}$ and $0.97 \leqslant \kappa \leqslant 1$, we have
$$
R_{a,b}(N,c,d,\kappa) \gg \prod_{\substack{p \mid c \\ p \nmid N\\ p>2}} \left(\frac{p-1}{p-2}\right)\frac{C(abN) N^{\kappa}}{\varphi(c)ab(\log N)^2}.
$$
\end{theorem}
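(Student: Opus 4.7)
The plan is to combine the weighted sieve of \cite{CAI867} (which underlies Theorem~\ref{t3}) with the arithmetic-progression setup of \cite{KanShan} (which underlies Theorem~\ref{t4}), working throughout in the short interval $I = [N/2 - N^\kappa,\, N/2 + N^\kappa]$. Given a prime $p \in I$ with $p \equiv d \pmod{c}$ and $b \mid N-ap$, write $n = (N-ap)/b$. I would form the Cai-type weighted sum
\[
S = \sum_{\substack{p \in I,\ p \equiv d \pmod{c} \\ b \mid N-ap,\ n \in I \\ ap,\, bn \text{ square-free}}} W(n),
\]
where $W(n)$ is the logarithmic weight from \cite{CAI867}, constructed so that $W(n) \leq 0$ whenever $n$ has three or more prime factors in the designated sifting ranges. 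A positive lower bound for $S$ of the order $\prod_{p\mid c,\,p\nmid N,\,p>2}\tfrac{p-1}{p-2}\cdot \tfrac{C(abN)N^{\kappa}}{\varphi(c)ab(\log N)^2}$ then implies Theorem~\ref{t43}, once small-prime-factor contributions are disposed of by standard lower-bound sieves and the square-free conditions are handled exactly as in the proof of Theorem~\ref{t1}.

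Next I would decompose $S$ into a main ``direct'' piece, estimated from below by the Jurkat--Richert linear sieve applied to the sequence $\mathcal{A} = \{n\}$, and several ``switching'' pieces that invoke Chen's switching principle to bound above the contribution from $n$ with exactly three prime factors; the latter pieces are realized as upper-bound sieving on the dual sequence obtained by swapping the small prime factor of $n$ with the variable $p$. The local densities at primes dividing $c$ produce the factor $\varphi(c)^{-1} \prod_{p \mid c,\,p \nmid N,\,p>2}(p-1)/(p-2)$ exactly as in \cite{KanShan}, while at primes dividing $abN$ they produce the standard singular series $C(abN)$. Granted the requisite level of distribution, the final numerical optimization is identical to that in \cite{CAI867} and produces a strictly positive absolute constant for $\kappa \geq 0.97$, so the argument reduces to combining the sieve bookkeeping already used for Theorems~\ref{t3} and~\ref{t4}.

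The main obstacle is the arithmetic input, namely a Bombieri--Vinogradov type theorem on average over moduli $q \leq N^{1/2-\varepsilon}$ for the sequence of primes $p \in I$ with $p \equiv d \pmod{c}$ and prescribed residue of $ap$ modulo $bq$, uniformly in $c \leq (\log N)^C$. For such small $c$, a Siegel--Walfisz estimate splits off the congruence modulo $c$ cleanly, reducing the problem to the short-interval Bombieri--Vinogradov estimate of Perelli--Pintz--Salerno/Zhan already used in \cite{Ross1978} and \cite{CAI867}; the requirement $\kappa \geq 0.97$ is precisely the threshold at which that estimate delivers level $\tfrac{1}{2} - \varepsilon$. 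After the switching step the dual sequence lives in a slightly modified range, but its level of distribution is controlled by the same combination of inputs, so with this arithmetic input established the numerical analysis of \cite{CAI867} transfers verbatim and yields the bound claimed in Theorem~\ref{t43}.
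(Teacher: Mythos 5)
Your outline matches the route the paper sketches in Section 8: a Cai--style weighted sieve decomposition (the analogue of Lemma 5.3 for the set $\mathcal{A}_6$), Chen's switching principle, Wu's short-interval Bombieri--Vinogradov estimate (Lemma 4.3 together with Lemma 4.4), and the Kan--Shan treatment of the modulus $c \leqslant (\log N)^C$ via the modified sifting set $\mathcal{P}'=\{p:(p,Nc)=1\}$, which is exactly what produces the extra local factor $\prod_{p\mid c,\ p\nmid N,\ p>2}\frac{p-1}{p-2}$ in the $W$-product. One small caveat: the constraint $\kappa\geq 0.97$ is not the threshold at which the short-interval Bombieri--Vinogradov estimate reaches level $\tfrac{1}{2}-\varepsilon$ --- Lemma 4.3 holds for every $\kappa>3/5$ and yields level $N^{\kappa-1/2}$, which is strictly below $N^{1/2-\varepsilon}$ whenever $\kappa<1$ --- rather, $0.97$ is the point at which Cai's weighted sieve numerics, fed that shrinking level, still return a positive constant, so the constraint is sieve-theoretic and not purely arithmetic.
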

Clearly our Theorems~\ref{t42}--\ref{t43} focus on the case when $c$ is "small". For "large" $c$, we need to control the size of both $\theta$ (or $\kappa$) and $c$, and it seems hard to say what is "optimal". For example, we can show that for some $0<\delta_1<0.028$, $0.9409<\delta_2<1$ and $c \leqslant N^{\delta_1}$, except for $O\left(N^{\delta_1} (\log N)^{-A}\right)$ exceptional values, we have
\begin{equation}
R_{a,b}^{\delta_2}(N,c,d) \gg \prod_{\substack{p \mid c \\ p \nmid N\\ p>2}} \left(\frac{p-1}{p-2}\right)\frac{C(abN) N^{\delta_2}}{\varphi(c)ab(\log N)^2},
\end{equation}
but we cannot say what choice of $\delta_1$ and $\delta_2$ are the optimal values.

From our Theorems~\ref{t4}--\ref{t43}, it can be seen that the second aim of this paper is to construct some new results on the natural numbers of the form $ap+bP_2$ that generalize the results on the even numbers of the form $p+P_2$, $p+P_r$ and $P_s+P_r$.

The last theorem in this paper is a Goldbach-type upper bound result. Similar to [\cite{LIHUIXI}, Theorem 1. (2)], we also improve the upper bound of the number of primes $p$ such that $ap$ and $N-ap$ are both square-free, $b \mid (N-ap)$, and $\frac{N-ap}{b}$ is also a prime number. By using a delicate sieve process similar to that of [\cite{ERPAN}, Chap. 9.2],  we prove that
\begin{theorem}\label{t5}
$$
\sum_{\substack{a p_{1}+b p_{2}=N \\ p_{1} \text { and } p_{2} \text { are primes }}} 1 \leqslant 7.928 \frac{C(abN) N}{a b(\log N)^{2}}.
$$
\end{theorem}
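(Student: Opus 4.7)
The plan is to bound the number of representations $ap_1+bp_2=N$ by applying Selberg's $\Lambda^2$ upper bound sieve simultaneously to both variables $p_1$ and $p_2$, following the approach of Pan and Pan in [\cite{ERPAN}, Chap.~9.2] for the classical binary Goldbach problem (the case $a=b=1$) and generalizing to arbitrary coprime squarefree $a,b$ with $(ab,N)=1$.

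First I would encode the primality of $p_1$ and $p_2$ by the requirement that neither has a prime factor below a sifting threshold $z=N^{1/4-\varepsilon}$, at an acceptable cost from the primes themselves below $z$. Writing $m=(N-an)/b$ and summing over $n\leqslant N/a$ in the arithmetic progression $bn\equiv N\pmod a$, I would bound the indicator of ``$n$ and $m$ both coprime to $P(z)$'' by the product of two Selberg squares $\bigl(\sum_{d\mid n}\lambda_d\bigr)^{2}\bigl(\sum_{e\mid m}\xi_e\bigr)^{2}$, expand the squares, and interchange the order of summation. For fixed squarefree $d,e$ supported at primes below $z$, the inner count of admissible $n$ is handled by the Chinese Remainder Theorem: provided the natural compatibility conditions with $abN$ hold, $n$ lies in a single residue class modulo $abde$, giving a count of $N/(abde)+O(1)$. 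The accumulated remainder terms, summed over $d,e$ with $de\leqslant N^{1/2-\varepsilon}$, are bounded directly by $O(N^{1-\varepsilon/3})$ without invoking any deep equidistribution theorem such as Bombieri--Vinogradov.

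The heart of the argument is then the optimization of the two families of Selberg weights in this $2$-dimensional sieve problem. The local density function arising from the congruence $an+bm\equiv N$ modulo each prime produces, upon summation, the standard singular series $C(abN)$, with local factors $(1-2/p)$ at primes $p\nmid abN$ and the appropriate modifications at primes dividing $abN$ (a single residue class is eliminated at $p\mid N$, $p\nmid ab$, and one of the two variables becomes unconstrained at $p\mid ab$). The main term then takes the shape $\mathrm{const}\cdot C(abN)\,N/(ab(\log N)^{2})$; a pure Selberg optimization yields the classical constant $8$, and a refined choice of weights as in [\cite{ERPAN}, Chap.~9.2]—mixing the Selberg weights with a corrective term coming from a Buchstab-type decomposition of the sifting function at the upper end—yields the stated improvement to $7.928$.

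The main obstacle will be the numerical optimization of the Selberg-type weights together with the careful bookkeeping of the local factors at primes dividing $ab$, since these local densities must combine with the standard densities at $p\nmid abN$ to produce exactly $C(abN)/(ab)$ in the main term. A secondary difficulty is verifying that the extra flexibility obtained from the Buchstab correction actually reduces the Selberg constant from $8$ to the explicit value $7.928$ rather than being absorbed by error terms; this reduces to a concrete one-dimensional minimization of a quadratic functional in the variational parameter, which can be carried out by the same techniques used in the classical case $a=b=1$ treated in [\cite{ERPAN}, Chap.~9.2].
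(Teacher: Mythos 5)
Your proposal takes a genuinely different route from the paper, but it contains a gap that I do not believe can be closed as stated. The paper's proof of Theorem~\ref{t5} is a \emph{one-dimensional} weighted linear sieve applied to the single sequence $\mathcal{A}_1=\{(N-ap)/b:p\ \text{prime}\}$: one of the two variables is a genuine prime from the outset, the other is sifted. Lemma~\ref{upperboundsieve} bounds the count by $S(\mathcal{A}_1;\mathcal{P},(N/b)^{1/5})$ and then applies a half-weighted Buchstab decomposition, $\Upsilon_1-\tfrac12\Upsilon_2+\tfrac12\Upsilon_3$; the terms $\Upsilon_1,\Upsilon_2$ are handled by the Jurkat--Richert linear sieve (Lemmas~\ref{l1},~\ref{l2}) and $\Upsilon_3$ by a switching argument as in $S_6$. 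Crucially, all of this requires a level of distribution $N^{1/2}$ for $\mathcal{A}_1$, which comes from Bombieri--Vinogradov via Lemma~\ref{l3}. The constant $7.928$ is a consequence of having $f$ and $F$ evaluated at $s\approx 7/2$ and $5/2$ rather than at $s=2$; at $s=2$ one only gets $F(2)=e^{\gamma}$, i.e.\ the classical $8$, and the whole point of the Buchstab decomposition is to push $s$ above $2$, which is impossible without the Bombieri--Vinogradov level of distribution.

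You instead propose a two-dimensional (double-$\Lambda^2$) Selberg sieve, sifting $p_1$ and $p_2$ simultaneously, and you explicitly advertise that ``the accumulated remainder terms \dots\ are bounded directly \dots\ without invoking any deep equidistribution theorem such as Bombieri--Vinogradov.'' That part is internally consistent: when both variables are sifted, each remainder term is $O(1)$, and since the Selberg weights are supported on $d\leqslant N^{1/4-\varepsilon}$ the total error is elementary. But this elementary setting is precisely what caps you at the constant $8$. The $\kappa=2$ Selberg sieve at level $N^{1/2}$ gives $8$, full stop, and the claimed improvement to $7.928$ is where the proposal breaks. You attribute the improvement to ``a refined choice of weights as in [\cite{ERPAN}, Chap.~9.2]\dots\ mixing the Selberg weights with a corrective term coming from a Buchstab-type decomposition,'' but that reference (and the paper following it) performs exactly the one-dimensional BV-based argument described above, not a correction to a two-dimensional Selberg sieve. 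Within your two-dimensional elementary framework, the analogous Buchstab correction terms would involve counting $n$ with $p\mid n$ and $q\mid(N-an)/b$ with $p,q$ up to a sifting level, and to convert those into a genuine numerical saving below $8$ you would again need good equidistribution of those sub-sequences --- i.e.\ you would be re-importing Bombieri--Vinogradov through the back door. As stated, the step from $8$ to $7.928$ is unsubstantiated and, as far as I can see, false in the elementary setting; it is not just a delicate ``one-dimensional minimization of a quadratic functional'' that can be carried out as in the $a=b=1$ case, because the $a=b=1$ case in \cite{ERPAN} also uses BV.

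Concretely, the missing ingredient is a level-of-distribution input for the sequence $\mathcal{A}_1$ (or its Buchstab remnants): without it there is no way to evaluate $\Upsilon_2$ from below and $\Upsilon_3$ from above with enough precision to gain $8-7.928=0.072$. If you want an elementary two-dimensional proof, the correct conclusion is the constant $8$; to reach $7.928$ you must switch to the paper's one-dimensional setup (one variable a prime, the other sifted at level $N^{1/2}$) and invoke Lemma~\ref{l3}, i.e.\ Bombieri--Vinogradov, exactly as Lemma~\ref{upperboundsieve} and the outline in Section~8 do.
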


In fact, Lemmas~\ref{l31}--\ref{l35} are also valid for the sets $\mathcal{A}_3$--$\mathcal{A}_6$ in section 2 if we make some suitable modifications. Since the detail of the proof of Theorems~\ref{t3}--\ref{t5} is similar to those of \cite{CL1999}, \cite{KanShan}, \cite{CL1999_2}, \cite{ERPAN} and Theorems~\ref{t1}--\ref{t2} so we omit them in this paper.

In this paper, we do not focus on Chen's double sieve technique. Maybe this can be used to improve our Theorems~\ref{t1}--\ref{t5}. For this, we refer the interested readers to \cite{Chen1978_3}, \cite{Wu2004}, \cite{Wu2008} and Quarel's thesis \cite{quarel}.

It is worth to mention that if we relax the number of prime factors of $\frac{N-ap}{b}$ from two to three, we can extend the range of $\theta$ in Theorems~\ref{t2} and ~\ref{t42} and $\kappa$ in Theorems~\ref{t3} and ~\ref{t43} to $0.838 \leqslant \theta \leqslant 1$ and $0.919 \leqslant \kappa \leqslant 1$ respectively. This improvement partially relies on the cancellation of the use of Wu's mean value theorem (see \cite{Wu1993}, this is because we don't need Chen's switching principle to prove such results that involve integers of the form $a p+b P_{3}$).

\section{The sets we want to sieve}
We first list the sets that we will work with later. Let $\theta=0.9409$ in the following sections. Put
\begin{align}
\nonumber \mathcal{A}_1=&\left\{\frac{N-a p}{b}: p \leqslant \frac{N}{a},\ (p,a b N)=1,\right. \\
\nonumber & \quad \left. p \equiv N a_{b^{2}}^{-1}+k b \left(\bmod b^{2}\right),\ 0 \leqslant k \leqslant b-1,\ (k, b)=1\right\}, \\
\nonumber \mathcal{A}_2=&\left\{\frac{N-a p}{b}: p \leqslant \frac{N^{\theta}}{a},\ (p,a b N)=1,\right. \\
\nonumber & \quad \left. p \equiv N a_{b^{2}}^{-1}+k b \left(\bmod b^{2}\right),\ 0 \leqslant k \leqslant b-1,\ (k, b)=1\right\},\\
\nonumber \mathcal{A}_3=&\left\{\frac{N-a p}{b}: \frac{N/2-N^{0.97}}{a} \leqslant p \leqslant \frac{N/2+N^{0.97}}{a},\ (p,a b N)=1,\right. \\
\nonumber & \quad \left. p \equiv N a_{b^{2}}^{-1}+k b \left(\bmod b^{2}\right),\ 0 \leqslant k \leqslant b-1,\ (k, b)=1\right\},\\
\nonumber \mathcal{A}_4=&\left\{\frac{N-a p}{b}: p \leqslant \frac{N}{a},\ (p,a b N)=1,\ p \equiv d(\bmod c), (c,d)=1, \right. \\
\nonumber & \quad \left. \left(\frac{N-ad}{b},c\right)=1,\ p \equiv N a_{b^{2}}^{-1}+k b \left(\bmod b^{2}\right),\ 0 \leqslant k \leqslant b-1,\ (k, b)=1\right\}, \\
\nonumber \mathcal{A}_5=&\left\{\frac{N-a p}{b}: p \leqslant \frac{N^{\theta}}{a},\ (p,a b N)=1,\ p \equiv d(\bmod c),\ (c,d)=1, \right. \\
\nonumber & \quad \left. \left(\frac{N-ad}{b},c\right)=1,\ p \equiv N a_{b^{2}}^{-1}+k b \left(\bmod b^{2}\right),\ 0 \leqslant k \leqslant b-1,\ (k, b)=1\right\}, \\
\nonumber \mathcal{A}_6=&\left\{\frac{N-a p}{b}: \frac{N/2-N^{0.97}}{a} \leqslant p \leqslant \frac{N/2+N^{0.97}}{a},\ (p,a b N)=1,\ p \equiv d(\bmod c),  \right. \\
\nonumber & \quad \left. (c,d)=1,\ \left(\frac{N-ad}{b},c\right)=1,\ p \equiv N a_{b^{2}}^{-1}+k b \left(\bmod b^{2}\right),\ 0 \leqslant k \leqslant b-1,\ (k, b)=1\right\}, \\
\nonumber \mathcal{B}_1= & \left\{\frac{N-b p_{1} p_{2} p_{3}}{a}: \left(p_{1} p_{2} p_{3}, a b N\right)=1,\ p_{3} \leqslant \frac{N}{b p_{1} p_{2}},\right. \\
\nonumber & \quad \left(\frac{N}{b}\right)^{\frac{1}{13.2}} \leqslant p_1 <\left(\frac{N}{b}\right)^{\frac{1}{3}} \leqslant p_2 <\left(\frac{N}{b p_{1}}\right)^{\frac{1}{2}},\\ 
\nonumber & \quad \left. p_{3} \equiv N\left(b p_{1} p_{2}\right)_{a^{2}}^{-1}+j a \left(\bmod a^{2}\right),\ 0 \leqslant j \leqslant a-1,\ (j, a)=1\right\},\\
\nonumber \mathcal{B}_2= & \left\{\frac{N-b p_{1} p_{2} p_{3}}{a}: \left(p_{1} p_{2} p_{3}, a b N\right)=1,\ \frac{N-N^{\theta}}{b p_{1} p_{2}} \leqslant p_{3} \leqslant \frac{N}{b p_{1} p_{2}},\right. \\
\nonumber & \quad \left(\frac{N}{b}\right)^{\frac{1}{14}} \leqslant p_1 <\left(\frac{N}{b}\right)^{\frac{1}{3.1}} \leqslant p_2 <\left(\frac{N}{b p_{1}}\right)^{\frac{1}{2}},\\ 
\nonumber & \quad \left. p_{3} \equiv N\left(b p_{1} p_{2}\right)_{a^{2}}^{-1}+j a \left(\bmod a^{2}\right),\ 0 \leqslant j \leqslant a-1,\ (j, a)=1\right\},\\
\nonumber\mathcal{C}_1=&\left\{\frac{N-bmp_1p_2p_3p_4}{a}: \left(p_{1} p_{2} p_{4},\ abN\right)=1,\ \left(\frac{N}{b}\right)^{\frac{1}{13.2}} \leqslant p_{1}<p_{4}<p_{2}<\left(\frac{N}{b}\right)^{\frac{1}{8.4}}, \right.\\
\nonumber& \quad \left. 1 \leqslant m \leqslant \frac{N}{bp_{1} p_{2}^{2} p_{4}},\ \left(m, p_{1}^{-1} abN P\left(p_{4}\right)\right)=1,\right. \\ 
\nonumber& \quad \left. p_{3} \equiv N\left(bm p_{1} p_{2} p_4\right)_{a^{2}}^{-1}+j a \left(\bmod a^{2}\right),\ 0 \leqslant j \leqslant a-1,\ (j, a)=1,\right.\\
\nonumber& \quad \left. p_2<p_3<\min\left(\left(\frac{N}{b}\right)^\frac{1}{8.4},\frac{N}{bmp_1p_2p_4}\right)
\right\},\\
\nonumber\mathcal{C}_2=&\left\{\frac{N-bmp_1p_2p_3p_4}{a}: \left(p_{1} p_{2} p_3 p_{4}, abN\right)=1,\ \left(\frac{N}{b}\right)^{\frac{1}{14}} \leqslant p_{1}<p_{2}<p_3<p_{4}<\left(\frac{N}{b}\right)^{\frac{1}{8.8}},\right. \\
\nonumber& \quad \left.m p_{1} p_{2} p_3 p_{4} \equiv N b_{a^{2}}^{-1} +j a \left(\bmod a^{2}\right),\ 0 \leqslant j \leqslant a-1,\ (j, a)=1,\right.\\
\nonumber& \quad \left. \frac{N-N^{\theta}}{bp_{1} p_{2} p_3 p_{4}} \leqslant m \leqslant \frac{N}{bp_{1} p_{2} p_3 p_{4}},\ \left(m, p_{1}^{-1} abN P\left(p_{2}\right)\right)=1\right\},\\
\nonumber\mathcal{C}_3=&\left\{\frac{N-bmp_1p_2p_3p_4}{a}: \left(p_{1} p_{2} p_3 p_{4}, abN\right)=1,\ \left(\frac{N}{b}\right)^{\frac{1}{14}} \leqslant p_{1}<p_{2}<p_3<\left(\frac{N}{b}\right)^{\frac{1}{8.8}}\leqslant p_4<\left(\frac{N}{b}\right)^{\frac{4.5863}{14}}p_3^{-1},\right. \\
\nonumber& \quad \left.m p_{1} p_{2} p_3 p_{4} \equiv N b_{a^{2}}^{-1} +j a \left(\bmod a^{2}\right),\ 0 \leqslant j \leqslant a-1,\ (j, a)=1,\right.\\
\nonumber& \quad \left. \frac{N-N^{\theta}}{bp_{1} p_{2} p_3 p_{4}} \leqslant m \leqslant \frac{N}{bp_{1} p_{2} p_3 p_{4}},\ \left(m, p_{1}^{-1} abN P\left(p_{2}\right)\right)=1\right\},\\
\nonumber\mathcal{E}_1=&\left\{p_1 p_2:\left(p_{1} p_{2}, abN\right)=1,\ \left(\frac{N}{b}\right)^{\frac{1}{13.2}} \leqslant p_{1}<\left(\frac{N}{b}\right)^{\frac{1}{3}}\leqslant p_{2}<\left(\frac{N}{bp_1}\right)^{\frac{1}{2}}
\right\},\\
\nonumber\mathcal{E}_2=&\left\{p_1 p_2:\left(p_{1} p_{2}, abN\right)=1,\ \left(\frac{N}{b}\right)^{\frac{1}{14}} \leqslant p_{1}<\left(\frac{N}{b}\right)^{\frac{1}{3.1}}\leqslant p_{2}<\left(\frac{N}{bp_1}\right)^{\frac{1}{2}}
\right\},\\
\nonumber\mathcal{F}_1=&\left\{mp_1 p_2 p_4: \left(p_{1} p_{2} p_{4}, abN\right)=1,\ \left(\frac{N}{b}\right)^{\frac{1}{13.2}} \leqslant p_{1}<p_{4}<p_{2}<\left(\frac{N}{b}\right)^{\frac{1}{8.4}},\right.\\
\nonumber& \quad \left. 1 \leqslant m \leqslant \frac{N}{bp_{1} p_{2}^{2} p_{4}},\ \left(m, p_{1}^{-1} abN P\left(p_{4}\right)\right)=1
\right\},\\
\nonumber\mathcal{F}_2=&\left\{mp_1 p_2 p_3 p_4:\left(p_{1} p_{2} p_3 p_{4}, abN\right)=1,\ \left(\frac{N}{b}\right)^{\frac{1}{14}} \leqslant p_{1}<p_{2}<p_3<p_{4}<\left(\frac{N}{b}\right)^{\frac{1}{8.8}},\right. \\ 
\nonumber& \quad \left. \frac{N-N^{\theta}}{bp_{1} p_{2} p_3 p_{4}} \leqslant m \leqslant \frac{N}{bp_{1} p_{2} p_3 p_{4}},\ \left(m, p_{1}^{-1} abN P\left(p_{2}\right)\right)=1
\right\},\\
\nonumber\mathcal{F}_3=&\left\{mp_1 p_2 p_3 p_4:\left(p_{1} p_{2} p_3 p_{4}, abN\right)=1,\ \left(\frac{N}{b}\right)^{\frac{1}{14}} \leqslant p_{1}<p_{2}<p_3<\left(\frac{N}{b}\right)^{\frac{1}{8.8}}\leqslant p_4<\left(\frac{N}{b}\right)^{\frac{4.5863}{14}}p_3^{-1},\right. \\
\nonumber& \quad \left. \frac{N-N^{\theta}}{bp_{1} p_{2} p_3 p_{4}} \leqslant m \leqslant \frac{N}{bp_{1} p_{2} p_3 p_{4}},\ \left(m, p_{1}^{-1} abN P\left(p_{2}\right)\right)=1\right\},
\end{align}
where $a_{b^{2}}^{-1}$ is the multiplicative inverse of $a \bmod b^{2}$, which exists by our assumption $(a, b)=1$.

\section{Preliminary lemmas}
Let $\mathcal{A}$ denote a finite set of positive integers, $\mathcal{P}$ denote an infinite set of primes and $z \geqslant 2$. Suppose that $|\mathcal{A}| \sim X_{\mathcal{A}}$ and for square-free $d$, put
$$
\mathcal{P}=\{p : (p, N)=1\},\quad
\mathcal{P}(r)=\{p : p \in \mathcal{P},(p, r)=1\},
$$
$$
P(z)=\prod_{\substack{p\in \mathcal{P}\\p<z}} p,\quad
\mathcal{A}_{d}=\{a : a d \in \mathcal{A} \},\quad
S(\mathcal{A}; \mathcal{P},z)=\sum_{\substack{a \in \mathcal{A} \\ (a, P(z))=1}} 1.
$$

\begin{lemma}\label{l1} ([\cite{Kan2}, Lemma 1]). If
$$
\sum_{z_{1} \leqslant p<z_{2}} \frac{\omega(p)}{p}=\log \frac{\log z_{2}}{\log z_{1}}+O\left(\frac{1}{\log z_{1}}\right), \quad z_{2}>z_{1} \geqslant 2,
$$
where $\omega(d)$ is a multiplicative function, $0 \leqslant \omega(p)<p, X>1$ is independent of $d$. Then
$$
S(\mathcal{A}; \mathcal{P}, z) \geqslant X_{\mathcal{A}} W(z)\left\{f\left(\frac{\log D}{\log z}\right)+O\left(\frac{1}{\log ^{\frac{1}{3}} D}\right)\right\}-\sum_{\substack{n\leqslant D \\ n \mid P(z)}}\left|\eta\left(X_{\mathcal{A}}, n\right)\right|
$$
$$
S(\mathcal{A}; \mathcal{P}, z) \leqslant X_{\mathcal{A}} W(z)\left\{F\left(\frac{\log D}{\log z}\right)+O\left(\frac{1}{\log ^{\frac{1}{3}} D}\right)\right\}+\sum_{\substack{n\leqslant D \\ n \mid P(z)}}\left|\eta\left(X_{\mathcal{A}}, n\right)\right|
$$
where
$$
W(z)=\prod_{\substack{p<z \\ (p,N)=1}}\left(1-\frac{\omega(p)}{p}\right),\quad \eta(X_{\mathcal{A}}, n)=|\mathcal{A}_n|-\frac{\omega(n)}{n} X_\mathcal{A}=\sum_{\substack{a \in \mathcal{A} \\ a \equiv 0(\bmod n)}} 1-\frac{\omega(n)}{n} X_\mathcal{A},
$$
$\gamma$ denote the Euler's constant, $f(s)$ and $F(s)$ are determined by the following differential-difference equation
\begin{align*}
\begin{cases}
F(s)=\frac{2 e^{\gamma}}{s}, \quad f(s)=0, \quad &0<s \leqslant 2,\\
(s F(s))^{\prime}=f(s-1), \quad(s f(s))^{\prime}=F(s-1), \quad &s \geqslant 2 .
\end{cases}
\end{align*}
\end{lemma}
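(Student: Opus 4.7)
The statement is a one-dimensional (linear) sieve bound of Jurkat--Richert type, with sharp Rosser--Iwaniec constants encoded by the functions $F$ and $f$. The plan is to construct combinatorial upper and lower bound sieve weights $\lambda_d^+$ and $\lambda_d^-$, supported on squarefree divisors $d \mid P(z)$ with $d \leq D$, by the Rosser--Iwaniec truncation of the M\"obius function on the descending sequence of prime divisors of $d$. These weights satisfy the fundamental inequalities
\[
\sum_{d \mid n} \lambda_d^- \;\leq\; [n=1] \;\leq\; \sum_{d \mid n} \lambda_d^+
\]
for every positive integer $n$.

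First I would apply these inequalities to $n=(a, P(z))$ and sum over $a \in \mathcal{A}$, obtaining
\[
\sum_{\substack{d \mid P(z) \\ d \leq D}} \lambda_d^- |\mathcal{A}_d| \;\leq\; S(\mathcal{A}; \mathcal{P}, z) \;\leq\; \sum_{\substack{d \mid P(z) \\ d \leq D}} \lambda_d^+ |\mathcal{A}_d|.
\]
Substituting $|\mathcal{A}_d| = (\omega(d)/d) X_\mathcal{A} + \eta(X_\mathcal{A}, d)$ and splitting off the $\eta$-contribution, the remainder is bounded by $\sum_{d \leq D,\, d \mid P(z)} |\eta(X_\mathcal{A}, d)|$, which is exactly the error term appearing in the statement.

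The heart of the argument is then to show that the main-term sums satisfy
\[
\sum_{\substack{d \mid P(z) \\ d \leq D}} \lambda_d^\pm \frac{\omega(d)}{d} \;=\; W(z)\Bigl\{F\bigl(\tfrac{\log D}{\log z}\bigr)+O\bigl((\log D)^{-1/3}\bigr)\Bigr\}
\]
with $f$ in place of $F$ for the lower bound. Using the linear-sieve hypothesis $\sum_{z_1 \leq p < z_2} \omega(p)/p = \log(\log z_2/\log z_1) + O(1/\log z_1)$, iterating Buchstab's identity on the Rosser sum converts it into nested integrals against $W(z)$, and these integrals provably satisfy the defining recursion $(sF(s))' = f(s-1)$, $(sf(s))' = F(s-1)$ with the prescribed boundary data $F(s) = 2e^\gamma/s$, $f(s) = 0$ for $0 < s \leq 2$. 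This identifies the limits with the functions $F$ and $f$ of the lemma.

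The main obstacle I expect is establishing the uniform error term $O((\log D)^{-1/3})$. This requires the delicate combinatorial choice of the Rosser--Iwaniec truncation that matches the truncation level $D$ to the functional recursion while keeping the boundary adjustments controlled, together with a careful treatment of the Buchstab tails near the critical level. Once this technical engine---fully developed in \cite{HR74} and in [\cite{Opera}, Ch.~11]---is in place, the lemma is a direct packaging of the output, as is done in [\cite{Kan2}, Lemma~1].
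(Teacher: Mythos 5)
The paper states this lemma without proof, attributing it directly to Kan ([\cite{Kan2}, Lemma 1]); there is no proof in the present paper to compare against. Your sketch is a faithful outline of the standard Jurkat--Richert / Rosser--Iwaniec derivation of the linear sieve bounds (as developed in \cite{HR74} and \cite{Opera}), which is indeed how the cited result is obtained, so your approach is essentially the same as the one the paper implicitly relies on.
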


\begin{lemma}\label{l2} ([\cite{CAI867}, Lemma 2]).
\begin{align*}
F(s)=&\frac{2 e^{\gamma}}{s}, \quad 0<s \leqslant 3;\\
F(s)=&\frac{2 e^{\gamma}}{s}\left(1+\int_{2}^{s-1} \frac{\log (t-1)}{t} d t\right), \quad 3 \leqslant s \leqslant 5 ;\\
F(s)=&\frac{2 e^{\gamma}}{s}\left(1+\int_{2}^{s-1} \frac{\log (t-1)}{t} d t+\int_{2}^{s-3} \frac{\log (t-1)}{t} d t \int_{t+2}^{s-1} \frac{1}{u} \log \frac{u-1}{t+1} d u\right), \quad 5 \leqslant s \leqslant 7;\\
f(s)=&\frac{2 e^{\gamma} \log (s-1)}{s}, \quad 2 \leqslant s \leqslant 4 ;\\
f(s)=&\frac{2 e^{\gamma}}{s}\left(\log (s-1)+\int_{3}^{s-1} \frac{d t}{t} \int_{2}^{t-1} \frac{\log (u-1)}{u} d u\right), \quad 4 \leqslant s \leqslant 6 ;\\
f(s)=& \frac{2 e^{\gamma}}{s}\left(\log (s-1)+\int_{3}^{s-1} \frac{d t}{t} \int_{2}^{t-1}\frac{\log (u-1)}{u} d u\right.\\ & \left.+\int_{2}^{s-4} \frac{\log (t-1)}{t} d t \int_{t+2}^{s-2} \frac{1}{u} \log \frac{u-1}{t+1} \log \frac{s}{u+2} d u\right), \quad 6 \leqslant s \leqslant 8.
\end{align*}
\end{lemma}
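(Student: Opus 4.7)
The plan is to prove the formulas for $F$ and $f$ by induction on the interval, repeatedly integrating the differential-difference equations $(sF(s))'=f(s-1)$ and $(sf(s))'=F(s-1)$ and using the already-established lower-range formulas to evaluate the right-hand sides. The base step is immediate: for $0<s\leqslant 2$ we are given $F(s)=2e^{\gamma}/s$ and $f(s)=0$.

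First I would extend $F$ to $[2,3]$: since $f(s-1)=0$ for $s\leqslant 3$, the function $sF(s)$ is constant on $[0,3]$, so $F(s)=2e^{\gamma}/s$ holds on the entire range $0<s\leqslant 3$. Next, for $f$ on $[2,4]$, I would use $(sf(s))'=F(s-1)=2e^{\gamma}/(s-1)$ and integrate from $2$ to $s$; since $2f(2)=0$ we get $sf(s)=2e^{\gamma}\log(s-1)$, yielding the claimed formula. Then for $F$ on $[3,5]$, use $(sF(s))'=f(s-1)=2e^{\gamma}\log(s-2)/(s-1)$, integrate from $3$ to $s$ with boundary value $3F(3)=2e^{\gamma}$, and change variables $u=t-1$ to rewrite the integrand as $\log(u-1)/u$ on $[2,s-1]$; this gives the stated expression.

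Next I would handle $f$ on $[4,6]$ by integrating $(sf(s))'=F(s-1)$, where $F(s-1)$ on $[4,6]$ now involves both the $2e^{\gamma}/(s-1)$ piece (for $s-1\leqslant 3$) and the single-integral piece (for $3\leqslant s-1\leqslant 5$). After integration and switching the order in the resulting double integral, plus matching at $s=4$ with the value $4f(4)=2e^{\gamma}\log 3$, the inner integral cleans up to the form $\int_{3}^{s-1}\frac{dt}{t}\int_{2}^{t-1}\frac{\log(u-1)}{u}\,du$ as stated. I would carry out the analogous calculation for $F$ on $[5,7]$: integrate $f(s-1)$ from $5$ to $s$ using the just-derived formula, and the additional iterated-integral term arises by an interchange of order together with a variable change that rewrites the inner logarithm as $\log\frac{u-1}{t+1}$. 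Finally for $f$ on $[6,8]$, integrate $(sf(s))'=F(s-1)$ using the expression for $F$ on $[5,7]$; the triple-integral correction produces the factor $\log\frac{s}{u+2}$ via another integration-order swap and the substitution arising from the outer integration variable.

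The routine part is the integration and applying the $(sf)'$, $(sF)'$ identities; the delicate part—and the one step where errors are easy to make—will be the bookkeeping of the iterated integrals in the last two cases, specifically performing the correct change of variables so that the upper and lower limits collapse into the symmetric forms $\int_{t+2}^{s-1}\frac{1}{u}\log\frac{u-1}{t+1}\,du$ and $\int_{t+2}^{s-2}\frac{1}{u}\log\frac{u-1}{t+1}\log\frac{s}{u+2}\,du$, and verifying continuity at the breakpoints $s=3,5,7$ (for $F$) and $s=4,6,8$ (for $f$) to confirm the matching constants. Beyond that, the proof is a direct termwise computation.
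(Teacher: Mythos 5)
The paper does not prove this lemma itself: it is cited verbatim from [\cite{CAI867}, Lemma 2], and the explicit formulas are standard in the linear-sieve literature (Jurkat--Richert). Your approach---iteratively integrating the differential-difference system $(sF(s))'=f(s-1)$, $(sf(s))'=F(s-1)$ starting from the initial values on $(0,2]$---is indeed the standard derivation and is correct in spirit.

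One substantive caution about the ``bookkeeping'' you yourself flag as delicate. The particular iterated-integral forms Cai writes are not the ones that fall out most directly, and ``switching the order'' is not a cosmetic clean-up but the step that determines which of several equivalent expressions you land on. Concretely, on $[4,6]$ integrating $(sf(s))'=F(s-1)$ and substituting $v\mapsto v-1$ gives, with \emph{no} order switch, exactly $f(s)=\frac{2e^\gamma}{s}\bigl(\log(s-1)+\int_3^{s-1}\frac{dt}{t}\int_2^{t-1}\frac{\log(u-1)}{u}\,du\bigr)$; interchanging the order instead produces the equivalent single integral $\int_2^{s-2}\frac{\log(u-1)}{u}\log\frac{s-1}{u+1}\,du$. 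It is this \emph{second} form of $f$ that, when inserted into the next step $(sF(s))'=f(s-1)$ on $[5,7]$ followed by one Fubini interchange and the shift $w=v-1$, yields precisely Cai's $\int_2^{s-3}\frac{\log(t-1)}{t}\,dt\int_{t+2}^{s-1}\frac{1}{u}\log\frac{u-1}{t+1}\,du$. If you instead feed in the nested form, you arrive at $\int_2^{s-3}\frac{\log(u-1)}{u}\,du\int_{u+1}^{s-2}\frac{1}{t}\log\frac{s-1}{t+1}\,dt$, which equals Cai's expression but only after an extra integration-by-parts identity (using $\frac{d}{dt}\bigl[\log t\log(t+1)\bigr]=\frac{\log(t+1)}{t}+\frac{\log t}{t+1}$). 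The same phenomenon recurs at the next level for $f$ on $[6,8]$. So the recursion you describe does work, but you should be explicit that the Fubini order must be chosen at each stage so the recursion reproduces Cai's exact expressions; otherwise an additional nontrivial identity is needed. Continuity at the breakpoints $s=3,5,7$ and $s=4,6,8$ is automatic, since $sF(s)$ and $sf(s)$ are antiderivatives of continuous functions and the constants are fixed by the left-endpoint values.
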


\begin{lemma}\label{l4} ([\cite{CAI867}, Lemma 4]). Let
$$
x>1, \quad z=x^{\frac{1}{u}}, \quad Q(z)=\prod_{p<z} p.
$$
Then for $u \geqslant 1$, we have
$$
\sum_{\substack{n \leqslant x \\(n, Q(z))=1}} 1=w(u) \frac{x}{\log z}+O\left(\frac{x}{\log ^{2} z}\right),
$$
where $w(u)$ is determined by the following differential-difference equation
\begin{align*}
\begin{cases}
w(u)=\frac{1}{u}, & \quad 1 \leqslant u \leqslant 2, \\
(u w(u))^{\prime}=w(u-1), & \quad u \geqslant 2 .
\end{cases}
\end{align*}
Moreover, we have
$$
\begin{cases}w(u) \leqslant \frac{1}{1.763}, & u \geqslant 2, \\ w(u)<0.5644, & u \geqslant 3, \\ w(u)<0.5617, & u \geqslant 4.\end{cases}
$$
\end{lemma}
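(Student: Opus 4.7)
This is the classical Buchstab formula for the sifting function $\Phi(x,z) := |\{n\leqslant x:(n,Q(z))=1\}|$ together with standard numerical bounds on Buchstab's function $w(u)$. My plan is to first establish the asymptotic by induction on $u=\log x/\log z$ using the Buchstab identity, and then derive the three numerical inequalities by explicit integration of the differential-difference equation on the successive intervals $[2,3]$, $[3,4]$, $[4,5]$.

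For the asymptotic I would start from the Buchstab identity
\[
\Phi(x,z)=\Phi(x,z_0)-\sum_{z_0\leqslant p<z}\Phi\!\left(\tfrac{x}{p},p\right),\qquad 2\leqslant z_0\leqslant z.
\]
The base case is $1\leqslant u\leqslant 2$: an integer $n\leqslant x$ with $(n,Q(z))=1$ is either $1$ or a prime in $[z,x]$, so by the prime number theorem $\Phi(x,z)=\pi(x)-\pi(z)+1=x/\log z+O(x/\log^2 z)$, which matches $w(u)x/\log z$ with $w(u)=1/u$. For the inductive step, fix $u>2$ and assume the asymptotic for all $u'<u$. Choosing $z_0$ slightly below $z$ and applying the inductive hypothesis to each term $\Phi(x/p,p)$ on the right, partial summation combined with Mertens' theorem converts the prime sum into an integral, producing the identity $(uw(u))'=w(u-1)$ for the asymptotic coefficient. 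Finitely many iterations propagate the error at the cost $O(x/\log^2 z)$.

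The three numerical bounds follow from explicit integration. On $[2,3]$ the equation $(uw(u))'=1/(u-1)$ with initial condition $2w(2)=1$ gives $w(u)=(1+\log(u-1))/u$; setting $w'(u)=0$ reduces to $(u-1)\log(u-1)=1$, whose unique root $u-1\approx1.763$ yields the maximum $w_{\max}=1/1.763$. For $u\geqslant 3$ one integrates once more using $w(u-1)=(1+\log(u-2))/(u-1)$ on $[3,4]$, locates the critical point in closed form, and verifies the strict inequality $w(u)<0.5644$; a further iteration on $[4,5]$ establishes $w(u)<0.5617$. For $u$ beyond these intervals one uses the standard fact that $w(u)$ oscillates with exponentially decaying amplitude toward $e^{-\gamma}\approx 0.5615$, which lies strictly below all three thresholds, so the bounds are preserved.

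The main technical obstacle is managing the error term uniformly across the iteration: one has to verify that after $\lfloor u\rfloor$ applications of the Buchstab identity the accumulated error remains $O(x/\log^2 z)$ rather than growing with the number of steps. For any fixed bounded $u$ (which is all our subsequent applications require) this is routine, since the number of iterations is bounded and each step introduces only a multiplicative constant in the error.
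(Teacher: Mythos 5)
The paper does not prove this lemma at all; it is cited verbatim from Cai's paper [\cite{CAI867}, Lemma 4] as a known ingredient, so there is no in-paper argument to compare your proposal against. What you have written is, in outline, the standard proof of the Buchstab asymptotic, and the reasoning is sound: the base case $1<u\leqslant 2$ counts $1$ together with the primes in $[z,x]$ and appeals to the prime number theorem; the inductive step iterates the Buchstab identity and converts the prime sum to an integral by partial summation and Mertens, yielding $(uw(u))'=w(u-1)$; and the numerical bounds come from integrating the differential-difference equation explicitly on $[2,3]$, $[3,4]$, $[4,5]$ and then invoking the damped oscillation of $w$ around $e^{-\gamma}$ to handle all larger $u$. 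Your critical-point computation on $[2,3]$ is exactly right: $w(u)=(1+\log(u-1))/u$, the stationarity condition is $(u-1)\log(u-1)=1$, and at that point $w=1/(u-1)$, giving the bound $1/1.763$.

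Two small points worth tightening if you were to write this out in full. First, in the displayed chain for the base case you wrote $\pi(x)-\pi(z)+1=x/\log z+O(x/\log^2 z)$; the leading term should read $(1/u)\,x/\log z$ (you do state the correct conclusion $w(u)=1/u$ immediately afterward, so this is just a slip). Second, the claim ``$u\geqslant 1$'' in the lemma is a little generous: as $u\to 1^+$ the secondary term $-z/\log z$ in the full Buchstab asymptotic is not absorbed by $O(x/\log^2 z)$, so one really needs $u$ bounded away from $1$ (or, more precisely, the uniform version $\Phi(x,z)=\big(x\,w(u)-z\big)/\log z+O(x/\log^2 z)$). This has no bearing on the paper's applications, all of which take $u\geqslant 2$. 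Finally, to turn the last paragraph into a complete argument one should verify numerically that the local maximum of $w$ on $[3,4]$ is $w(3)=(1+\log 2)/3\approx 0.56438$ and that the local maximum on $[4,\infty)$ stays below $0.5617$; you correctly identify this as a finite computation controlled by the decay of $w-e^{-\gamma}$.
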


\begin{lemma}\label{buchstabshort} ([\cite{CL2011}, Lemma 2.6], [\cite{CL1999}, Lemma 4]). Let
$$
x>1, \quad x^{\frac{19}{24}+\varepsilon} \leqslant y_1 \leqslant \frac{x}{\log x}, \quad x^{\frac{3}{5}} \leqslant y_2 <x, \quad z=x^{\frac{1}{u}}, \quad Q(z)=\prod_{p<z} p.
$$
Then for $u > 1$, we have
$$
\sum_{\substack{x-y_1 \leqslant n \leqslant x \\(n, Q(z))=1}} 1=w(u) \frac{y_1}{\log z}+O\left(\frac{y_1}{\log ^{2} z}\right),
$$
$$
\sum_{\substack{x \leqslant n < x+y_2 \\(n, Q(z))=1}} 1=w(u) \frac{y_2}{\log z}+O\left(\frac{y_2}{\log ^{2} z}\right),
$$
where $w(u)$ is defined in Lemma~\ref{l4}.
\end{lemma}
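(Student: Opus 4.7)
The strategy is to adapt the classical proof of Lemma~\ref{l4} (Buchstab iteration) to the short-interval setting, replacing the ordinary prime number theorem by its short-interval analogue wherever the iteration bottoms out. Let $\mathcal{A}^- = \Z \cap [x - y_1, x]$ and $\mathcal{A}^+ = \Z \cap [x, x+y_2)$; the two statements are proved in parallel, so I focus on the backward interval and indicate the modifications for the forward one at the end.

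First I would dispose of the base range $1 < u \leqslant 2$, i.e.\ $z \geqslant \sqrt{x}$. Any $n \in \mathcal{A}^-$ with $(n, Q(z)) = 1$ has all prime factors $\geqslant z \geqslant \sqrt{x}$; since $n \leqslant x < z^2$, such an $n$ is either $1$ or a prime in $[x-y_1, x]$. Hence
\[
\sum_{\substack{x-y_1 \leqslant n \leqslant x \\ (n, Q(z)) = 1}} 1 \;=\; \pi(x) - \pi(x - y_1) + O(1).
\]
A Huxley--Heath-Brown type prime number theorem in short intervals, valid for $y_1 \geqslant x^{7/12 + \varepsilon}$ and hence for $y_1 \geqslant x^{19/24 + \varepsilon}$, yields $\pi(x) - \pi(x-y_1) = y_1/\log x + O(y_1/\log^2 x)$, which equals $w(u)\,y_1/\log z + O(y_1/\log^2 z)$ because $w(u) = 1/u$ and $\log x = u \log z$.

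For $u > 2$ I would iterate Buchstab's identity
\[
S(\mathcal{A}^-; \mathcal{P}, z) \;=\; S(\mathcal{A}^-; \mathcal{P}, \sqrt{x}) \;+\; \sum_{z \leqslant p < \sqrt{x}} S(\mathcal{A}^-_p; \mathcal{P}, p),
\]
applied inductively on $u$. Each inner term $S(\mathcal{A}^-_p; \mathcal{P}, p)$ counts integers in $[(x-y_1)/p,\, x/p]$ coprime to $Q(p)$, i.e.\ is again a short-interval sieve at scale $x' = x/p$ with length $y_1' = y_1/p$ and parameter $u' = \log(x/p)/\log p$. Invoking the inductive hypothesis at scale $x'$ produces $w(u') y_1'/\log p$ plus admissible error; summing over $p$ and converting to an integral via partial summation recovers the integral relation encoded in the differential-difference equation $(u w(u))' = w(u-1)$, which then identifies the main term as $w(u) y_1/\log z$.

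The principal obstacle is to ensure that the rescaled short-interval hypothesis survives every step of the recursion. If $y_1 \geqslant x^{19/24 + \varepsilon}$ and $p \in [z, \sqrt{x}]$, then $y_1/p \geqslant x^{19/24+\varepsilon}/\sqrt{x} = x^{7/24+\varepsilon}$, while $(x/p)^{7/12} \leqslant x^{7/12}$; the compatibility $y_1/p \geqslant (x/p)^{7/12 + \varepsilon}$ holds precisely when $p \leqslant x^{1/2}$, which is exactly the Buchstab summation range. This tight bookkeeping is what pins the exponent at the otherwise mysterious value $19/24$. For the forward interval $\mathcal{A}^+$ the only change is to invoke the forward short-interval PNT, which is classically available in the slightly wider range $y_2 \geqslant x^{3/5}$ by Ingham-type arguments; the analogous recursion inequality then yields the weaker hypothesis stated, and the rest of the Buchstab iteration carries over verbatim.
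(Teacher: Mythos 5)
The paper does not prove this lemma; it is quoted verbatim from Cai--Lu (\cite{CL2011}, Lemma~2.6; \cite{CL1999}, Lemma~4), so there is no in-paper proof to compare against. Judging your sketch on its own terms, the base case $1<u\leqslant 2$ is fine, but the inductive step does not close. To invoke the inductive hypothesis on the inner term $\Phi\bigl([\tfrac{x-y_1}{p},\tfrac{x}{p}],p\bigr)$ you need the \emph{lemma's own} hypothesis at the rescaled parameters, namely $y_1/p\geqslant (x/p)^{19/24+\varepsilon}$, not merely the PNT input $y_1/p\geqslant (x/p)^{7/12+\varepsilon}$. With $y_1=x^{19/24+\varepsilon}$ the left side is $x^{19/24+\varepsilon}/p$ while the right side is $x^{19/24+\varepsilon}/p^{19/24+\varepsilon}$; since $p>p^{19/24+\varepsilon}$ for $p>1$ (take $\varepsilon<5/24$), the required inequality fails for \emph{every} prime $p$ in the Buchstab range. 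Your $7/12$-bookkeeping is valid only when every inner term is already a base case, i.e.\ when $u\leqslant 3$. But the paper applies this lemma with $u\geqslant 4$ (the constant $0.5617=\sup_{u\geqslant 4}w(u)$ in the evaluation of $|\mathcal{C}_2|$), and for such $u$ some of the terms $\Phi([\tfrac{x-y_1}{pq},\tfrac{x}{pq}],q)$ have $pq>x^{1/2}$, placing the residual prime count at a scale $x/(pq)<x^{1/2}$ where Huxley's theorem is inapplicable. The published proofs must therefore handle the regime $p_1\cdots p_k>x^{1/2}$ by a separate device (e.g.\ a Brun--Titchmarsh/sieve upper bound showing that contribution is $O(y_1/\log^2 z)$), which your sketch omits.

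Your treatment of the forward interval is also wrong on the facts: Ingham's short-interval exponent is $5/8$, which is \emph{larger} than $3/5$, so ``Ingham-type arguments'' do not give a PNT at $y_2\geqslant x^{3/5}$, and Huxley's $7/12$ already covers $x^{3/5}$. More to the point, if the same Buchstab bookkeeping applied to $[x,x+y_2)$ the resulting exponent would again be $\approx 19/24$, not $3/5$; the pronounced asymmetry between the two hypotheses signals that the forward statement in \cite{CL1999} rests on a different argument (in the forward case one can afford $y_2$ comparable to $x$, so a difference-of-long-intervals analysis becomes viable), which your recursion picture does not capture.
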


\begin{lemma}\label{Wfunction}
If we define the function $\omega$ as $\omega(p)=0$ for primes $p \mid a b N$ and $\omega(p)=\frac{p}{p-1}$ for other primes and $N^{\frac{1}{\alpha}-\varepsilon}<z \leqslant N^{\frac{1}{\alpha}}$, then we have
$$
W(z)=\frac{2\alpha e^{-\gamma} C(abN)(1+o(1))}{\log N}.
$$
\end{lemma}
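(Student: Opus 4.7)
The plan is to evaluate $W(z)$ directly as an Euler product, splitting it into a Mertens-type divergent piece and an absolutely convergent remainder, and then reassembling the result into the shape of $C(abN)$. The strategy parallels the standard computation carried out in Chen's theorem; the only genuine extra bookkeeping here is to track the effect of the primes dividing $ab$ in addition to those dividing $N$ in the local factors.

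First I would simplify $W(z)$. The assumptions on $a,b,N$ in the paper force $2\mid abN$ (if $a$ and $b$ are both odd then $N$ is required to be even, and otherwise one of $a,b$ is even), so $\omega(2)=0$, which is important since the formula $\omega(p)=p/(p-1)$ would violate the bound $\omega(p)<p$ at $p=2$. Combined with the constraint $(p,N)=1$ from Lemma~\ref{l1}, one obtains
$$
W(z)=\prod_{\substack{p<z\\p\nmid abN}}\frac{p-2}{p-1}.
$$
Next I would exploit the algebraic identity $p(p-2)=(p-1)^2-1$, equivalently
$$
\frac{p-2}{p-1}=\left(1-\frac{1}{(p-1)^2}\right)\left(1-\frac{1}{p}\right),
$$
to separate the convergent and divergent parts.

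For the absolutely convergent factor I would extend the product over $p\nmid abN$ to all odd primes at the cost of an $(1+o(1))$ error, and then detach the contribution of the primes dividing $abN$:
$$
\prod_{\substack{p<z\\p\nmid abN}}\left(1-\frac{1}{(p-1)^2}\right)=(1+o(1))\prod_{p>2}\left(1-\frac{1}{(p-1)^2}\right)\prod_{\substack{p\mid abN\\p>2}}\frac{(p-1)^2}{p(p-2)}.
$$
For the Mertens-type factor I would use $\prod_{p<z}(1-1/p)=e^{-\gamma}(\log z)^{-1}(1+o(1))$. To pass to the restricted product $p\nmid abN$, note that primes dividing $abN$ which exceed $z$ are at most $O(\alpha)$ in number (since each contributes at least $(1/\alpha-\varepsilon)\log N$ to $\log(abN)\leqslant(1+2\varepsilon)\log N$) and each such prime contributes a factor $1+O(N^{-1/\alpha+\varepsilon})$. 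Therefore
$$
\prod_{\substack{p<z\\p\nmid abN}}\left(1-\frac{1}{p}\right)=\frac{e^{-\gamma}}{\log z}\prod_{p\mid abN}\frac{p}{p-1}\cdot(1+o(1)).
$$

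Finally I would combine both pieces. Peeling off the $p=2$ factor from $\prod_{p\mid abN}p/(p-1)$ produces the constant $2$ in front, and for each odd $p\mid abN$ the product $\frac{p}{p-1}\cdot\frac{(p-1)^2}{p(p-2)}=\frac{p-1}{p-2}$ telescopes into precisely the first product in the definition of $C(abN)$. This yields
$$
W(z)=\frac{2e^{-\gamma}C(abN)}{\log z}(1+o(1)),
$$
and the stated formula follows from $\log z=\alpha^{-1}\log N\cdot(1+O(\varepsilon))$ in the given range for $z$. The only delicate point is the step bounding the contribution of the primes dividing $abN$ which exceed $z$; but this is harmless once one notes that their count is bounded by $\log(abN)/\log z\leqslant\alpha(1+o(1))$, so the proof is essentially a careful rearrangement.
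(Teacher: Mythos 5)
Your proposal is correct and follows essentially the same route as the paper: both evaluate $W(z)$ by a Mertens-type asymptotic together with an algebraic rearrangement of the local factors $\tfrac{p-2}{p-1}=(1-\tfrac{1}{(p-1)^2})(1-\tfrac1p)$ into the Euler product defining $C(abN)$, and both exploit $2\mid abN$ to peel off the factor of $2$. The only difference is cosmetic — the paper delegates the Mertens step to the cited Lemma~2 of \cite{LIHUIXI} and then rearranges the constant, whereas you carry out the Mertens estimate and the treatment of the large primes dividing $abN$ from scratch.
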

\begin{proof} By  [\cite{LIHUIXI}, Lemma 2] we have
$$
W(z)=\frac{N}{\varphi(N)} \prod_{(p,N)=1}\left(1-\frac{\omega(p)}{p}\right)\left(1-\frac{1}{p}\right)^{-1} \frac{e^{-\gamma}}{\log z}\left(1+O\left(\frac{1}{\log z}\right)\right).
$$
Since $2 \mid a b N$, we have
\begin{align}
\nonumber W(z)&=\frac{N}{\varphi(N)} \prod_{(p,N)=1}\left(1-\frac{\omega(p)}{p}\right)\left(1-\frac{1}{p}\right)^{-1} \frac{e^{-\gamma}}{\log z}\left(1+O\left(\frac{1}{\log z}\right)\right) \\
\nonumber & =\prod_{p \mid N} \frac{p}{p-1} \prod_{(p,N)=1} \left(1-\frac{\omega(p)}{p}\right)\left(1-\frac{1}{p}\right)^{-1} \frac{\alpha e^{-\gamma}(1+o(1))}{\log N} \\
\nonumber & =\prod_{p \mid N} \frac{p}{p-1} \prod_{p \mid a b} \left(1-\frac{1}{p}\right)^{-1} \prod_{(p,abN)=1} \left(1-\frac{1}{p-1}\right)\left(1-\frac{1}{p}\right)^{-1} \frac{\alpha e^{-\gamma}(1+o(1))}{\log N} \\
\nonumber & =\prod_{\substack{p \mid N \\
p>2}} \frac{p}{p-1} \prod_{\substack{p \mid a b \\
p>2}} \frac{p}{p-1} \frac{\prod_{p>2} \frac{p(p-2)}{(p-1)^{2}}}{\prod_{\substack{p \mid a b N \\
p>2}} \frac{p(p-2)}{(p-1)^{2}}} \frac{2\alpha e^{-\gamma}(1+o(1))}{\log N} \\
\nonumber & =\frac{2\alpha e^{-\gamma} C(abN)(1+o(1))}{\log N}.
\end{align}
\end{proof}

\section{Mean value theorems}
Now we provide some mean value theorems which will be used in bounding various sieve error terms later. The first two lemmas come from Pan and Pan's book \cite{ERPAN} and they were first proven by Pan, Ding and Wang.
\begin{lemma}\label{l3} ([\cite{ERPAN}, Corollary 8.2]). Let
$$
\pi(x ; k, d, l)=\sum_{\substack{k p \leq x \\ k p \equiv l(\bmod d)}} 1
$$
and let $g(k)$ be a real function, $g(k) \ll 1$. Then, for any given constant $A>0$, there exists a constant $B=B(A)>0$ such that
$$
\sum_{d \leqslant x^{1/2} (\log x)^{-B}}\max _{y \leqslant x}\max _{(l, d)=1} \left|\sum_{\substack{k \leqslant E(x) \\ (k, d)=1}} g(k) H(y ; k, d, l)\right| \ll \frac{x}{\log ^{A} x},
$$
where
$$
H(y ; k, d, l)=\pi(y ; k, d, l)-\frac{1}{\varphi(d)}\pi(y ; k, 1, 1)=\sum_{\substack{k p \leqslant y \\ k p \equiv l (\bmod d)}} 1-\frac{1}{\varphi(d)} \sum_{k p \leqslant y} 1, 
$$
$$
\frac{1}{2} \leqslant E(x) \ll x^{1-\alpha}, \quad 0<\alpha \leqslant 1,\quad B(A)=\frac{3}{2}A+17.
$$
\end{lemma}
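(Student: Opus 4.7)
The plan is to derive this result as a generalization of the Bombieri--Vinogradov theorem adapted to handle the extra convolution variable $k$ with weight $g(k)$, by passing to Dirichlet characters and then invoking the large sieve inequality in the spirit of Pan--Ding--Wang.

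First I would convert the inner discrepancy into a character sum. For $(k,d)=1$, the congruence $kp \equiv l \pmod{d}$ is equivalent to $p \equiv l\bar{k} \pmod{d}$ with $\bar k$ the inverse of $k$ modulo $d$; so by orthogonality of Dirichlet characters and separation of the principal character,
\[
H(y;k,d,l) \;=\; \frac{1}{\varphi(d)} \sum_{\chi \neq \chi_0 \pmod{d}} \overline{\chi(l\bar{k})}\, \sum_{p \leqslant y/k} \chi(p) \;+\; O(\log d).
\]
Multiplying by $g(k)$ with $|g(k)|\ll 1$, summing over $k$, taking the maximum over $(l,d)=1$ (which only costs $|\bar\chi(l)|=1$), and reducing each non-principal character to its primitive inducer $\chi^{*}$ of conductor $d^{*}\mid d$, the left-hand side of the lemma is dominated, up to negligible terms, by
\[
\sum_{d^{*} \leqslant x^{1/2}(\log x)^{-B}} \frac{1}{\varphi(d^{*})} \sum_{\chi^{*}\text{ prim}} \max_{y \leqslant x} \bigg| \sum_{k \leqslant E(x)} g(k)\, \chi^{*}(k) \sum_{p \leqslant y/k} \chi^{*}(p) \bigg|.
\]

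Second I would attack the inner prime sum by Vaughan's identity, rewriting $\sum_{p \leqslant u} \chi^{*}(p)$ as $\sum_{n \leqslant u} \Lambda(n)\chi^{*}(n)/\log n + O(\sqrt{u})$ via partial summation and splitting the $\Lambda$-sum into Type~I and Type~II pieces with Vaughan parameters $U=V=x^{1/3}$. Merging the outer $k$-sum with each Vaughan block produces bilinear forms $\sum_m \alpha_m \chi^{*}(m) \sum_n \beta_n \chi^{*}(n)$ with $mn \leqslant y$. After a dyadic decomposition, Cauchy--Schwarz in one variable, and the large-sieve inequality
\[
\sum_{d^{*} \leqslant D} \sum_{\chi^{*}\text{ prim mod } d^{*}} \bigg| \sum_{n \leqslant N} a_n \chi^{*}(n) \bigg|^2 \ll (D^2 + N) \sum_n |a_n|^2,
\]
each Type~II contribution is controlled. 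Type~I sums reduce to Siegel--Walfisz for small moduli combined with the P\'olya--Vinogradov bound for large moduli (a trivial bound suffices in the residual range). The quantitative restriction $E(x) \ll x^{1-\alpha}$ with $\alpha>0$ guarantees that after merging the $k$-variable into a Vaughan variable, both bilinear variables stay within the nontrivial range $[x^{\varepsilon}, x^{1-\varepsilon}]$ required for the large sieve to give cancellation. The $y$-maximum is finally removed by partial summation at one further cost of $\log x$. Tracking log losses carefully --- $\log^3 x$ from Vaughan, $\log x$ each from the dyadic decompositions in $d$ and in $y$, plus the target decay $\log^{A} x$ --- leads to the stated relation $B(A) = \tfrac{3}{2}A + 17$.

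The main obstacle is the Type~II estimate. Controlling the bilinear sums uniformly in the modulus $d^{*}$, the primitive character $\chi^{*}$, and the cut-off $y \leqslant x$ requires a careful dyadic decomposition together with absorption of the $k$-sum into a Vaughan variable in a way that does not push its range outside the bilinear ``sweet spot''; it is precisely here that the hypothesis $E(x) \ll x^{1-\alpha}$ is used. Dropping this hypothesis would force a term-by-term reduction to the classical Bombieri--Vinogradov in each $k$ separately, which fails because $\sum_{k \leqslant E(x)} 1/k \sim \log E(x)$ cannot be absorbed into the final $\log^{-A} x$ decay without the explicit saving coming from the bilinear structure.
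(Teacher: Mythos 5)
The paper does not prove this lemma; it cites it verbatim as Corollary 8.2 of Pan and Pan's monograph \cite{ERPAN}, so there is no internal proof to compare against. Judged on its own, your sketch traces the standard large-sieve route for a weighted Bombieri--Vinogradov theorem of this type --- orthogonality to pass to characters, reduction to primitives, a combinatorial decomposition of $\Lambda$, dyadic splitting, Cauchy--Schwarz, large sieve --- and that is indeed the right skeleton.

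Three specific points in your write-up do not hold up. \emph{(i)} The error from extracting the principal character is $O(\omega(d)/\varphi(d))$, not $O(\log d)$: it comes from the at most $\omega(d)$ primes dividing $d$, and the factor $1/\varphi(d)$ survives. This matters: with $O(\log d)$ the secondary contribution, after summing over $k\leqslant E(x)$ and $d\leqslant x^{1/2}\log^{-B}x$, is $\gg E(x)\cdot x^{1/2}$, which exceeds $x\log^{-A}x$ whenever $\alpha<\tfrac{1}{2}$ --- and the paper applies the lemma with $g$ supported on $\mathcal{E}_1\subset[(N/b)^{1/3},(N/b)^{2/3}]$, i.e.\ $\alpha=\tfrac13$. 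So as written your own argument would not close, even though the correct $O(\omega(d)/\varphi(d))$ sums harmlessly to $\ll E(x)\log^2 x$. \emph{(ii)} The assertion that bookkeeping ``leads to'' $B(A)=\tfrac{3}{2}A+17$ is not a derivation: the losses you enumerate ($\log^3 x$ from Vaughan, two dyadic $\log x$'s, the target $\log^{A}x$) would suggest something like $B=A+O(1)$, not a coefficient $\tfrac{3}{2}$; that coefficient is an artifact of the specific chain of Cauchy--Schwarz steps in \cite{ERPAN} and cannot be read off your tally. \emph{(iii)} Your diagnosis of why $E(x)\ll x^{1-\alpha}$ is needed misidentifies the obstruction. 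A naive term-by-term reduction to classical Bombieri--Vinogradov does not fail because of the harmonic factor $\sum_{k}1/k\sim\log E(x)$; it fails because, for fixed $k$, the prime sum has length only $y/k$, so classical BV covers moduli only up to $(y/k)^{1/2}$, which falls far short of $x^{1/2}$ except for bounded $k$. The hypothesis $E(x)\ll x^{1-\alpha}$ forces $y/k\gg x^{\alpha}$ at $y\asymp x$; this is what gives the $\Lambda$-decomposition a nondegenerate range and allows the $k$-variable to be merged into a bilinear block that the large sieve can exploit at level $x^{1/2}$.
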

\begin{lemma}\label{remark1} ([\cite{ERPAN}, Corollary 8.3 and 8.4]). Let $r_{1}(y)$ be a positive function depending on $x$ and satisfying $r_{1}(y) \ll x^{\alpha}$ for $y \leqslant x$. Then under the conditions in Lemma~\ref{l3}, we have
$$
\sum_{d \leqslant x^{1/2} (\log x)^{-B}} \max _{y \leqslant x}\max _{(l, d)=1} \left|\sum_{\substack{k \leqslant E(x) \\ (k, d)=1}} g(k) H\left(kr_{1}(y) ; k, d, l\right)\right| \ll \frac{x}{\log ^{A} x} .
$$
Let $r_{2}(k)$ be a positive function depending on $x$ and $y$ such that $k r_{2}(k) \ll x$ for $k \leqslant E(x)$ , $y \leqslant x$. Then under the conditions in Lemma~\ref{l3}, we have
$$
\sum_{d \leqslant x^{1/2} (\log x)^{-B}} \max _{y \leqslant x}\max _{(l, d)=1} \left|\sum_{\substack{k \leqslant E(x) \\ (k, d)=1}} g(k) H\left(kr_{2}(k) ; k, d, l\right)\right| \ll \frac{x}{\log ^{A} x} .
$$
\end{lemma}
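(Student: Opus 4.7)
The plan is to adapt the Dirichlet character expansion that underlies the proof of Lemma~\ref{l3} (Pan--Pan's Corollary 8.2). Under the hypothesis $(k,d)=1$, the congruence $kp\equiv l\pmod d$ is equivalent to $p\equiv l\bar k\pmod d$, where $\bar k$ denotes the inverse of $k$ modulo $d$. Hence
\[
H(kr_1(y);k,d,l)=\pi(r_1(y);d,l\bar k)-\frac{\pi(r_1(y))}{\varphi(d)}=\frac{1}{\varphi(d)}\sum_{\chi\neq\chi_0\pmod d}\bar\chi(l)\chi(k)\,\psi(r_1(y),\chi),
\]
where $\psi(u,\chi)=\sum_{p\leqslant u}\chi(p)$ and I have used $\bar\chi(l\bar k)=\bar\chi(l)\chi(k)$. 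The inner sum over $k$ therefore \emph{decouples} as
\[
\sum_{k}g(k)\,H(kr_1(y);k,d,l)=\frac{1}{\varphi(d)}\sum_{\chi\neq\chi_0}\bar\chi(l)\,\psi(r_1(y),\chi)\,G_d(\chi),
\]
with $G_d(\chi)=\sum_{k\leqslant E(x),\,(k,d)=1}g(k)\chi(k)$. Since $r_1(y)\ll x^{\alpha}\ll x$ for $y\leqslant x$, the maximum over $y$ of $|\psi(r_1(y),\chi)|$ is bounded by $\max_{u\leqslant Cx}|\psi(u,\chi)|$, which is exactly the quantity that appears in the Bombieri--Vinogradov theorem.

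Next I would sum over $d\leqslant x^{1/2}(\log x)^{-B}$ with $B=B(A)=\tfrac{3}{2}A+17$, pass to primitive characters $\chi^{*}\pmod{d^{*}}$ (the local corrections $|\psi(u,\chi)-\psi(u,\chi^{*})|\ll\sum_{p\mid d/d^{*}}\log p$ being harmless), and apply Cauchy--Schwarz to split the bound as the product of two factors. The first factor, $\sum_d\frac{1}{\varphi(d)}\sum_{\chi^{*}}\max_{u\leqslant Cx}|\psi(u,\chi^{*})|^{2}$, is controlled by the analytic estimate at the heart of Bombieri--Vinogradov (log-free zero-density results, the Gallagher--Vaughan identity, and the multiplicative large sieve). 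The second factor, $\sum_d\frac{1}{\varphi(d)}\sum_{\chi^{*}}|G_d(\chi)|^{2}$, is controlled by the multiplicative large sieve applied to the sequence $(g(k))_{k\leqslant E(x)}$; since $|g(k)|\ll 1$ and $E(x)\ll x^{1-\alpha}$, this yields an admissible bound. Multiplying the two contributions gives $\ll x/(\log x)^{A}$, as required.

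The second half of the lemma, involving $H(kr_2(k);k,d,l)$, is treated almost identically. The only modification is that $r_2(k)$ depends on $k$, so $\psi(r_2(k),\chi)$ cannot be pulled out of the sum over $k$ intact; instead one uses the uniform bound $|\psi(r_2(k),\chi)|\leqslant\max_{u\leqslant Cx}|\psi(u,\chi)|$ (permissible because $kr_2(k)\ll x$), losing at worst a constant, and runs the same Cauchy--Schwarz argument. The main obstacle in carrying out the plan is the careful bookkeeping of log factors: one must verify that the successive steps (passage to primitive characters, replacement of the sharp $u$-range by $u\leqslant Cx$, and the two large-sieve applications) together lose no more than a constant number of powers of $\log x$, which is precisely what the choice $B(A)=\tfrac{3}{2}A+17$ inherited from Lemma~\ref{l3} is calibrated to accommodate.
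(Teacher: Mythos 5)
The paper does not prove this lemma: it is quoted verbatim from Pan and Pan's book ([\cite{ERPAN}, Corollaries 8.3 and 8.4]), so there is no in-paper argument to compare with. Judged on its own merits, your proposal treats the two halves very unevenly. For the $r_1(y)$ half the essential observation is correct: since $H(kr_1(y);k,d,l)$ counts primes $p\leqslant r_1(y)$ with the bound on $p$ independent of $k$, the character expansion genuinely factors as $\psi(r_1(y),\chi)\,G_d(\chi)$, and the remainder is standard Bombieri--Vinogradov bookkeeping; as a sketch, this half is acceptable.

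The $r_2(k)$ half, however, contains a real gap. You propose to insert the uniform bound $|\psi(r_2(k),\chi)|\leqslant\max_{u\leqslant Cx}|\psi(u,\chi)|$ inside the $k$-sum and assert this ``loses at worst a constant,'' after which the ``same Cauchy--Schwarz argument'' runs. It does not: applying the uniform bound inside the $k$-sum destroys all cancellation in $\sum_k g(k)\chi(k)$, leaving $\sum_k|g(k)|\cdot\max_u|\psi(u,\chi)|\ll E(x)\max_u|\psi(u,\chi)|$. The factor $|G_d(\chi)|$ that made the second Cauchy--Schwarz factor manageable --- the multiplicative large sieve saves roughly $E(x)$ over the trivial bound there --- is replaced by $E(x)$ itself, and that saving is gone; in the applications of this paper $E(x)$ is a positive power of $x$, so this is far more than a constant loss. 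Handling a $k$-dependent truncation $p\leqslant r_2(k)$ requires an additional device absent from your sketch, e.g.\ partial summation or a fine mesh in the variable $kr_2(k)$ reducing the sum to $\sum_k g(k)H(t;k,d,l)$ at finitely many values $t\ll x$, each then covered by Lemma~\ref{l3} and its $\max_{y\leqslant x}$, or a direct treatment of the resulting bilinear region within the large-sieve decomposition. As written, your argument proves the $r_1$ corollary but not the $r_2$ one.
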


The next two lemmas were first proven by Wu \cite{Wu1993}, and they are the "short interval" version of Lemmas~\ref{l3}--\ref{remark1}. These will help us deal with the sieve error terms involved in evaluation of $S_{4}^{\prime}$ and $S_{7}^{\prime}$.
\begin{lemma}\label{BVshort}
([\cite{Wu1993}, Theorem 2]). Let $g(k)$ be a real function such that
$$
\sum_{k \leqslant x} \frac{g^{2}(k)}{k} \ll \log ^{C} x
$$
for some $C>0$. Then, for any given constant $A>0$, there exists a constant $B=B(A,C)>0$ such that
$$
\sum_{d \leqslant x^{t-1/2} (\log x)^{-B}} \max _{x/2 \leqslant y \leqslant x}\max _{(l, d)=1}\max _{h \leqslant x^t} \left|\sum_{\substack{k \leqslant x^\beta \\ (k, d)=1}} g(k) H_1(y ,h, k, d, l)\right| \ll \frac{x^t}{\log ^{A} x},
$$
where 
$$
\begin{aligned}
H_1(y ,h, k, d, l)=&\left(\pi(y+h ; k, d, l)-\pi(y ; k, d, l)\right)\\
&-\frac{1}{\varphi(d)}\left(\pi(y+h ; k, 1, 1)-(\pi(y ; k, 1, 1)\right)\\
=&\sum_{\substack{y<k p \leqslant y+h \\ k p \equiv l (\bmod d)}} 1-\frac{1}{\varphi(d)} \sum_{y<k p \leqslant y+h} 1,
\end{aligned}
$$
$$
\frac{3}{5} < t \leqslant 1, \quad 0 \leqslant \beta<\frac{5 t-3}{2},\quad B(A,C)=3A+C+34.
$$
\end{lemma}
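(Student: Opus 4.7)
The plan is to adapt the classical Bombieri--Vinogradov strategy to short intervals, following Wu's 1993 approach. First I would use orthogonality of Dirichlet characters to write
\[
H_1(y,h,k,d,l) = \frac{1}{\varphi(d)} \sum_{\chi \neq \chi_0 \bmod d} \bar\chi(l)\,\Psi_k(\chi;y,h), \qquad \Psi_k(\chi;y,h) := \sum_{y < kp \leqslant y+h} \chi(kp),
\]
and pass from all non-principal $\chi$ to the inducing primitive characters, absorbing the cost into $B$. Taking absolute values eliminates $\max_{(l,d)=1}$, and an application of Cauchy--Schwarz in $k$ together with the hypothesis $\sum_{k \leqslant x} g(k)^{2}/k \ll (\log x)^{C}$ reduces the matter to bounding the mean square
\[
\sum_{q \leqslant Q}\,\frac{1}{\varphi(q)}\,\sum^{*}_{\chi \bmod q}\,\max_{\substack{x/2 \leqslant y \leqslant x \\ h \leqslant x^{t}}}\,\bigl|\Psi_k(\chi;y,h)\bigr|^{2},
\]
where $Q = x^{t-1/2}(\log x)^{-B}$; the maxima over $y$ and $h$ are then removed by splitting into $O((\log x)^{2})$ subintervals and by partial summation.

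The core estimate would split according to the size of $q$. For $q \leqslant (\log x)^{B_{1}}$, Siegel--Walfisz combined with the Vinogradov--Korobov zero-free region handles $\Psi_k(\chi;y,h)$ directly, with savings of any prescribed power of $\log x$; the possible Siegel zero is absorbed via the standard sparse-set device. For $(\log x)^{B_{1}} < q \leqslant Q$, I would apply Heath--Brown's identity to decompose the von Mangoldt function into Type-I and Type-II bilinear pieces and then combine Gallagher's hybrid large sieve with Huxley's zero-density estimate
\[
\sum_{q \leqslant Q}\,\sum^{*}_{\chi \bmod q}\, N(\sigma,T,\chi) \ll (Q^{2} T)^{\tfrac{12}{5}(1-\sigma)+\ve},
\]
applied to the truncated explicit formula for $\Psi_k(\chi;y,h)$ at analytic height $T \asymp x^{1-t+\ve}$. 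Balancing Huxley's exponent $12/5$ against the interval length $h = x^{t}$, the $k$-range $x^{\beta}$, and the modulus cap $Q = x^{t-1/2}(\log x)^{-B}$ is precisely what forces the constraint $\beta < (5t-3)/2$.

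\textbf{Main obstacle.} The principal difficulty is the short-interval character sum estimate itself: because the relevant analytic height $T \asymp x^{1-t+\ve}$ is much smaller than the classical $T \asymp x$, the truncation of the explicit formula and the application of the zero-density bound must be done very carefully and uniformly in $q$, $\chi$, and $\sigma$. Careful tracking of the powers of $\log x$ through Cauchy--Schwarz, the primitive-character reduction, the splitting to remove $\max_{y,h}$, and the bilinear decomposition from Heath--Brown's identity is what produces the explicit admissible value $B(A,C) = 3A + C + 34$. Everything else (passing from $\pi$ to $\psi$ by partial summation, and the trivial removal of $\max_{(l,d)=1}$ inside absolute values) is routine once the character sum bound is in place.
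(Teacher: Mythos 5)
The paper does not prove this lemma: it is stated as a direct citation to Wu's 1993 paper ([\cite{Wu1993}, Theorem~2]) and used as a black box, so there is no proof in the paper to compare your proposal against. What you have written is a reasonable high-level outline of the standard machinery behind short-interval Bombieri--Vinogradov theorems---character orthogonality, reduction to primitive characters, Heath--Brown's identity to produce bilinear forms, and the interplay of Gallagher's hybrid large sieve with Huxley's zero-density estimate in the short-interval regime---and identifying the balance that produces the constraint $\beta < (5t-3)/2$ is the right place to look for the bottleneck.

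That said, the proposal is a plan rather than a proof, and the two quantitative conclusions of the lemma are asserted rather than derived. First, the Cauchy--Schwarz step ``in $k$'' as written does not obviously decouple $g(k)$ cleanly: after Cauchy--Schwarz one is left with a factor $\sum_k k\,|H_1|^2$ whose $k$-weight is awkward, and Wu's actual treatment threads $g(k)$ through the Heath--Brown decomposition as a Type-I/Type-II coefficient rather than stripping it first. Second, the claim that the bookkeeping produces exactly $B(A,C) = 3A + C + 34$ is stated without any of the accounting; this number is not a free parameter but the specific output of carefully tracking powers of $\log x$ through the character reduction, the removal of the $\max_{y,h}$ by logarithmic splitting, and the zero-density step, and no step of your outline actually produces it. To turn this into a proof you would need to (i) replace the ad hoc Cauchy--Schwarz by the correct incorporation of $g(k)$ into the bilinear decomposition, (ii) carry out the explicit-formula truncation at height $T$ and the balancing against $Q = x^{t-1/2}(\log x)^{-B}$ and $h = x^t$ to verify that Huxley's $12/5$ exponent yields $\beta < (5t-3)/2$ and nothing stronger, and (iii) actually track the logarithms to recover the value of $B(A,C)$. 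None of that work appears in the proposal.
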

\begin{lemma}\label{remark2}
([\cite{Cai2015}, Lemma 7], [\cite{CL1999}, Remark]). Let $g(k)$ be a real function such that
$$
\sum_{k \leqslant x} \frac{g^{2}(k)}{k} \ll \log ^{C} x
$$
for some $C>0$. Let $r_1(k,h)$ and $r_2(k,h)$ be positive function such that
$$
y \leqslant kr_1(k,h), kr_2(k,h) \leqslant y+h.
$$
Then, for any given constant $A>0$, there exists a constant $B=B(A,C)>0$ such that
$$
\sum_{d \leqslant x^{t-1/2} (\log x)^{-B}}\max _{x/2 \leqslant y \leqslant x}\max _{(l, d)=1}\max _{h \leqslant x^t} \left|\sum_{\substack{k \leqslant x^\beta \\ (k, d)=1}} g(k) H_2(y ,h, k, d, l)\right| \ll \frac{x^t}{\log ^{A} x},
$$
where
$$
\begin{aligned}
H_2(y ,h, k, d, l)=&\left(\pi(kr_2(k,h) ; k, d, l)-\pi(kr_1(k,h) ; k, d, l)\right)\\
&-\frac{1}{\varphi(d)}\left(\pi(kr_2(k,h); k, 1, 1)-(\pi(kr_1(k,h) ; k, 1, 1)\right)\\
=&\sum_{\substack{kr_1(k,h)<k p \leqslant kr_2(k,h) \\ k p \equiv l (\bmod d)}} 1-\frac{1}{\varphi(d)} \sum_{kr_1(k,h)<k p \leqslant kr_2(k,h)} 1,
\end{aligned}
$$
$$
\frac{3}{5} < t \leqslant 1, \quad 0 \leqslant \beta<\frac{5 t-3}{2},\quad B(A,C)=3A+C+34.
$$
\end{lemma}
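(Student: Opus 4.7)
The plan is to reduce Lemma~\ref{remark2} to Lemma~\ref{BVshort} by discretizing the $k$-dependent endpoints. First I would write
\begin{equation*}
H_2(y,h,k,d,l) = H_1\bigl(y,\,kr_2(k,h)-y,\,k,d,l\bigr) - H_1\bigl(y,\,kr_1(k,h)-y,\,k,d,l\bigr),
\end{equation*}
so the task reduces, for $i=1,2$, to bounding
\begin{equation*}
\sum_{d} \max_{x/2\leq y\leq x}\max_{(l,d)=1}\max_{h \leq x^t}\left|\sum_{\substack{k\leq x^\beta \\ (k,d)=1}} g(k)\, H_1\bigl(y, h_i(k), k, d, l\bigr)\right|,
\end{equation*}
where $h_i(k) := kr_i(k,h) - y$ lies in $[0,h]$. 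If the endpoint $h_i(k)$ were independent of $k$, this would follow directly from Lemma~\ref{BVshort}; the whole difficulty is that it is not.

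Next I would dyadically partition $[0, x^t]$ into $J = O(\log x)$ intervals $[\eta_j,\eta_{j+1}]$, split the inner sum according to which interval contains $h_i(k)$, and approximate $h_i(k)$ by $\eta_{j+1}$. The error introduced is itself of the form $H_1(y+h_i(k),\eta_{j+1}-h_i(k),k,d,l)$ and is handled by another dyadic pass. The indicator $\mathbf{1}\{h_i(k)\in[\eta_j,\eta_{j+1}]\}$ can be absorbed into a modified weight $\tilde g(k)$ satisfying $\sum_k \tilde g(k)^2/k \ll \log^{C'} x$ with $C' = C + O(1)$, since incorporating an indicator can only decrease the $L^2$ norm.

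Finally, applying Lemma~\ref{BVshort} with weight $\tilde g$ and fixed endpoint $\eta_{j+1}$ to each dyadic piece gives a bound of size $x^t/\log^{A+1} x$ per piece; summing over the $O(\log x)$ pieces yields the desired estimate $x^t/\log^{A} x$, with the logarithmic losses absorbed by a bounded enlargement of the sieve parameter $B(A,C)$. The main obstacle is verifying that replacing $g$ by $\tilde g = g\cdot \mathbf{1}\{h_i(k)\in[\eta_j,\eta_{j+1}]\}$ still triggers the Bombieri--Vinogradov-type input of Lemma~\ref{BVshort} without destroying the level-of-distribution $t-1/2$; here the key point is that the maximum over $h \leq x^t$ is already built into Lemma~\ref{BVshort}, so the dyadic bookkeeping only costs powers of $\log x$. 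This parallels the derivation of Lemma~\ref{remark1} from Lemma~\ref{l3} in \cite{ERPAN}, and was carried out explicitly in \cite[Lemma 7]{Cai2015} and the Remark following Lemma 4 of \cite{CL1999}.
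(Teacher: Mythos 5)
The paper records Lemma~\ref{remark2} only as a citation to [\cite{Cai2015}, Lemma~7] and [\cite{CL1999}, Remark] and gives no proof of its own, so I am judging your reduction on its own terms, and it has a genuine gap in the error control. Your opening decomposition $H_2=H_1(y,kr_2(k,h)-y,\cdot)-H_1(y,kr_1(k,h)-y,\cdot)$, the bucketing of $h_i(k):=kr_i(k,h)-y$ into a partition $\{(\eta_j,\eta_{j+1}]\}$ of $[0,x^t]$, and the application of Lemma~\ref{BVshort} with the indicator absorbed into the weight $g_j=g\cdot\mathbf 1$ all handle the frozen-endpoint piece $\sum_k g_j(k)H_1(y,\eta_j,k,d,l)$ correctly. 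The trouble is the residual $E_j(k):=H_1(y,h_i(k),k,d,l)-H_1(y,\eta_j,k,d,l)=H_1(y+\eta_j,\,h_i(k)-\eta_j,\,k,d,l)$. Saying this ``is handled by another dyadic pass'' is circular: $E_j(k)$ is again a short-interval BV quantity with a $k$-dependent length, and in a dyadic partition the bucket width $\eta_{j+1}-\eta_j$ is comparable to $\eta_j$, so the residual interval does not shrink appreciably from one pass to the next, while the number of buckets multiplies. Disposing of $E_j(k)$ trivially fares no better: for adversarial $y,l,h$ one only has $|E_j(k)|\ll 1+(\eta_{j+1}-\eta_j)/(kd)$, and since each $k$ lies in exactly one bucket, the constant-$1$ contribution accumulates (after Cauchy--Schwarz on the hypothesis $\sum_k g(k)^2/k\ll\log^C x$) to an error bounded only by
\begin{equation*}
\ll\sum_{d\leqslant x^{t-1/2}(\log x)^{-B}}\;\sum_{k\leqslant x^\beta}|g(k)|\;\ll\;x^{\,t-\frac12+\beta}\,(\log x)^{C/2-B},
\end{equation*}
which exceeds the target $x^t/\log^A x$ for every $\beta>\tfrac12$, i.e.\ for every $t>\tfrac45$ with $\beta$ near $\tfrac{5t-3}{2}$. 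This is precisely the regime of Theorem~\ref{t2} ($t=\theta=0.9409$, $\beta$ up to $\approx 0.852$), so the proposal as written does not close; the analogy with Lemma~\ref{remark1} is also misleading, because there the case $r_1(y)$ has a $k$-independent truncation $p\leqslant r_1(y)$ and therefore needs no discretization at all.

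The natural repair is not to invoke Lemma~\ref{BVshort} as a black box but to reopen Wu's proof of it: after Vaughan's identity the error splits into type~I and bilinear type~II sums, and the large-sieve inequality that handles the bilinear piece is insensitive to whether the truncation point $y+h$ is fixed or varies with the outer variable $k$ inside $[y,y+x^t]$ -- the $k$-dependence is absorbed at the level of the character sums without any loss in the level of distribution $t-\tfrac12$. If you want to preserve a pure ``reduction'' flavour, you would need a genuinely separating device for the sharp cutoff $\mathbf 1\{p\leqslant r_i(k,h)\}$ -- a Perron integral or a finite Fourier (Beurling--Selberg) expansion -- rather than a finite bucketing, since any finite partition leaves a residual whose trivial treatment is too lossy at $\sim x^{t-1/2}$ moduli and $\sim x^{\beta}$ values of $k$.
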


In \cite{Cai2015}, Cai said that we faced the difficulty that cannot be overcome by our Lemmas~\ref{BVshort}--\ref{remark2} which are not sufficient to deal with some of the sieve error terms involved. Actually, the function $g(k)$ cannot be well defined to control the sieve error terms occurred in the evaluation of $S_{6}^{\prime}$. (i.e. $\frac{5\theta-3}{2}<\frac{13}{14}$). So we need a new mean value theorem to overcome that. The next lemma is a new mean value theorem for products of large primes over short intervals and it was first proven by Cai \cite{Cai2015}. This lemma will help us deal with the sieve error terms involved in evaluation of $S_{6}^{\prime}$.
\begin{lemma}\label{newmeanvalue}For $j=2,3$ and any given constant $A>0$, there exists a constant $B=B(A)>0$ such that 
$$
\sum_{d \leqslant x^{\theta-1/2} (\log x)^{-B}}\max _{(l, d)=1}\left|\sum_{\substack{mp_1 p_2 p_3 p_4 \in \mathcal{F}_j \\ m p_1 p_2 p_3 p_4 \equiv l(\bmod d)}} 1-\frac{1}{\varphi(d)} \sum_{\substack{m p_1 p_2 p_3 p_4 \in \mathcal{F}_j \\(m p_1 p_2 p_3 p_4, d)=1}} 1\right| \ll \frac{x^\theta}{\log ^{A} x} .
$$
\end{lemma}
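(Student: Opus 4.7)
My plan is to adapt the dispersion-method argument from \cite{Cai2015} to the current short-interval/multi-prime setting. Set $P=p_1p_2p_3p_4$ and $k=p_1^{-1}abNP(p_2)$. When $(P,d)=1$ the congruence $mP\equiv l\pmod d$ is equivalent to $m\equiv l\overline{P}\pmod d$, while the contribution of $(P,d)>1$ is negligible after summation over $d$. First I would remove the coprimality condition $(m,k)=1$ via M\"obius inversion: writing $m=tm'$ with $t\mid k$ square-free, the quantity to estimate reduces to a triple sum over $t$, over tuples $(p_1,p_2,p_3,p_4)$, and over $m'$ in the interval $I_t=[(N-N^{\theta})/(btP),\,N/(btP)]$, weighted by $\mu(t)$ and subject to $m'tP\equiv l\pmod d$. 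Since every prime dividing $k$ is $\gg N^{1/14}$, one can truncate $t\le N^{\varepsilon}$ at acceptable cost.

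The crux is then to estimate, for each admissible $t$, the discrepancy
\begin{equation*}
\Delta(d,t)=\sum_{\substack{m'\in I_t\\ m'tP\equiv l\,(\bmod d)}}1-\frac{1}{\varphi(d)}\sum_{\substack{m'\in I_t\\ (m'tP,d)=1}}1,
\end{equation*}
summed over $d\le x^{\theta-1/2}(\log x)^{-B}$. The key observation, which is exactly what Lemmas~\ref{BVshort}--\ref{remark2} do not give (since their restriction $\beta<(5\theta-3)/2$ leaves no room for the \emph{non-prime} factor $m$), is that the five factors $p_1,p_2,p_3,p_4,m'$ together afford enough flexibility to split the product $m'tP$ into a bilinear form $UV$ that is well-balanced in every corner of the parameter range. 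In $\mathcal{F}_2$ each $p_i\in[(N/b)^{1/14},(N/b)^{1/8.8}]$, so by grouping two of the primes with $m'$ (or with the remaining two primes) one can arrange $U,V\in[x^{1/2-\eta},x^{1/2+\eta}]$ for some absolute $\eta>0$; in $\mathcal{F}_3$ the explicit upper bound $p_4<(N/b)^{4.5863/14}p_3^{-1}$ is engineered precisely so that the same balancing succeeds at level $\theta-\tfrac12=0.4409$.

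Once the bilinear structure is in place, I would invoke a large-sieve inequality for bilinear forms over short intervals, in the form used by Wu \cite{Wu1993} and Cai \cite{Cai2015}. This yields the uniform bound
\begin{equation*}
\sum_{d\le x^{\theta-1/2}(\log x)^{-B}}|\Delta(d,t)|\ll \frac{x^{\theta}}{t\log^{A} x}
\end{equation*}
for any $A>0$, provided $B=B(A)$ is chosen sufficiently large. Summing over $t\mid k$ and the admissible ways of partitioning the five factors into $(U,V)$ then yields Lemma~\ref{newmeanvalue}.

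The main obstacle I expect is the combinatorial case-analysis needed to certify that the bilinear decomposition covers every boundary of the $(p_1,p_2,p_3,p_4,m)$-simplex without a gap. In the worst case (three of the $p_i$ clustered near $(N/b)^{1/8.8}$ and one near $(N/b)^{1/14}$, with $m$ correspondingly long) one may be forced to invoke a single application of Heath-Brown's identity on $m'$ to manufacture the missing factor, exploiting that $(m,p_1^{-1}abNP(p_2))=1$ confines the prime factors of $m'$ to $[p_2,\infty)$. The constants $14$, $8.8$ and $4.5863$ in the definitions of $\mathcal{F}_2,\mathcal{F}_3$ have been calibrated exactly so that either a direct pairing or at most one such Heath-Brown step succeeds; once this verification is done the dispersion estimate carries through as in \cite{Cai2015}.
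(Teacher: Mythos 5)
Your Möbius-inversion step rests on a factual error that undermines the whole reduction. You set $k = p_1^{-1}abN P(p_2)$, write $m = t m'$ with $t \mid k$ square-free, and claim that ``every prime dividing $k$ is $\gg N^{1/14}$'' in order to truncate $t \le N^{\varepsilon}$. But in the paper's notation $P(p_2) = \prod_{p \in \mathcal{P},\, p < p_2} p$ is the product of \emph{all} primes below $p_2$ in $\mathcal{P}$, so $k$ is divisible by essentially every small prime coprime to $N$; its square-free divisors $t$ include arbitrary products of small primes, and there is no clean truncation at $N^{\varepsilon}$ (a Rankin-type tail bound blows up because $\sum_{t\mid k} t^{-1/2}$ is superpolynomial in $N$ once $p_2 \ge (N/b)^{1/14}$). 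The Heath-Brown contingency in your last paragraph addresses a different obstacle and does not repair this.

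The paper's actual proof is shorter and avoids Möbius inversion entirely by reading the coprimality condition the other way around: $(m, p_1^{-1}abN P(p_2)) = 1$ is not a constraint to be stripped off, but the key structural fact that every prime factor of $m$ is either $p_1$ or a prime $\geq p_2$, hence $\geq (N/b)^{1/14}$. Writing $m$ in its prime factorization, each element $m p_1 p_2 p_3 p_4 \in \mathcal{F}_j$ becomes a product $p_1 \cdots p_r$ of $r$ primes with $5 \le r \le 14$, all of size $\geq (N/b)^{1/14}$. The lemma then reduces to proving, for each such $r$, the bound
$$
\sum_{d \leqslant x^{\theta-1/2} (\log x)^{-B}}\max _{(l, d)=1}\left|\sum_{\substack{p_1 p_2 \cdots p_r \in \mathcal{F}_j \\ p_1 p_2 \cdots p_r \equiv l(\bmod d)}} 1-\frac{1}{\varphi(d)} \sum_{\substack{p_1 p_2 \cdots p_r \in \mathcal{F}_j \\(p_1 p_2 \cdots p_r, d)=1}} 1\right| \ll \frac{x^\theta}{\log ^{A} x},
$$
which is exactly the statement of [\cite{Cai2015}, Lemma~8] adapted to $\mathcal{F}_j$. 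Once one has $5$ to $14$ primes all $\geq (N/b)^{1/14}$ with product $\leq N/b$, a pigeonhole grouping of the primes always lands a subproduct in the admissible bilinear range for the dispersion argument, so no application of Heath-Brown's identity is needed. Your bilinear/dispersion endgame is the right target, but the route to it must go through the prime factorization of $m$, not Möbius inversion over the divisors of $k$.
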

\begin{proof}
This result can be proved in the same way as [\cite{Cai2015}, Lemma 8] by showing that for $j=2,3$ and $5 \leqslant r \leqslant 14$, the bounds
$$
\sum_{d \leqslant x^{\theta-1/2} (\log x)^{-B}}\max _{(l, d)=1}\left|\sum_{\substack{p_1 p_2 \cdots  p_r \in \mathcal{F}_j \\p_1 p_2 \cdots  p_r \equiv l(\bmod d)}} 1-\frac{1}{\varphi(d)} \sum_{\substack{p_1 p_2 \cdots  p_r \in \mathcal{F}_j \\(p_1 p_2 \cdots  p_r, d)=1}} 1\right| \ll \frac{x^\theta}{\log ^{A} x}
$$
hold.
\end{proof}

The following lemmas are the "arithmetical progression with almost all large $c$" versions of the above lemmas, and they will help us prove Theorem~\ref{t41}. We can also obtain variants of Theorems~\ref{t42}--\ref{t43} with "large" $c$ by using the following lemmas.
\begin{lemma}\label{almostBV}
([\cite{CL1999_2}, Lemma 4]). For any given constant $A>0$,  under the conditions in Lemmas~\ref{l3}--\ref{remark1}, there exists a constant $B=B(A)>0$ such that for $c \leqslant x^{0.028}$, except for $O\left(x^{0.028} (\log x)^{-A}\right)$ exceptional values, we have
$$
R_1=\sum_{d \leqslant \left(x^{1/2} (\log x)^{-B}\right)/c}\max _{y \leqslant x}\max _{(l, dc)=1} \left|\sum_{\substack{k \leqslant E(x) \\ (k, d)=1}} g(k) H(y ; k, dc, l)\right| \ll \frac{x^{1-0.028}}{\log ^{A} x},
$$
$$
R_2=\sum_{d \leqslant \left(x^{1/2} (\log x)^{-B}\right)/c}\max _{y \leqslant x}\max _{(l, dc)=1} \left|\sum_{\substack{k \leqslant E(x) \\ (k, d)=1}} g(k) H(kr_{1}(y) ; k, dc, l)\right| \ll \frac{x^{1-0.028}}{\log ^{A} x},
$$
$$
R_3=\sum_{d \leqslant \left(x^{1/2} (\log x)^{-B}\right)/c}\max _{y \leqslant x}\max _{(l, dc)=1} \left|\sum_{\substack{k \leqslant E(x) \\ (k, d)=1}} g(k) H(kr_{2}(k) ; k, dc, l)\right| \ll \frac{x^{1-0.028}}{\log ^{A} x}.
$$
\end{lemma}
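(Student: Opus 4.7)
The plan is to establish each of $R_1,R_2,R_3$ by a Chebyshev-style reduction to the unconditional Bombieri--Vinogradov--type bounds of Lemmas~\ref{l3}--\ref{remark1}, following Cai--Lu \cite{CL1999_2}. Since the three cases are structurally identical — only the error term $H$ is replaced by its shifted variants $H(k r_1(y);\cdot)$ and $H(k r_2(k);\cdot)$ when invoking Lemma~\ref{remark1} — I shall describe the argument only for $R_1$.

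First I would sum $R_1(c)$ over all $c \leqslant x^{0.028}$. Writing $D = d c$ and interchanging the order of summation, each modulus $D \leqslant x^{1/2}(\log x)^{-B}$ is counted at most $\tau(D) \ll_{\varepsilon} x^{\varepsilon}$ times (once for each divisor $c$ of $D$ with $c \leqslant x^{0.028}$). Replacing the inner coprimality condition $(k,d)=1$ by the stronger condition $(k,D)=1$ — the difference being controlled by a standard sieve estimate for the contribution of $k$'s sharing a prime factor with $c$ — yields
\begin{equation*}
\sum_{c \leqslant x^{0.028}} R_1(c) \;\ll\; x^{\varepsilon} \sum_{D \leqslant x^{1/2}(\log x)^{-B}} \max_{y \leqslant x}\max_{(l,D)=1}\left|\sum_{\substack{k \leqslant E(x)\\(k,D)=1}} g(k)\, H(y;k,D,l)\right|.
\end{equation*}
Invoking Lemma~\ref{l3} with $A$ replaced by $A+1$ (and taking $B = B(A+1)$ accordingly) bounds the right-hand side by $x/(\log x)^{A+1-\varepsilon}$, hence
\begin{equation*}
\sum_{c \leqslant x^{0.028}} R_1(c) \;\ll\; \frac{x}{(\log x)^{A+1-\varepsilon}}.
\end{equation*}

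Finally, I would invoke Chebyshev's inequality: the number of $c \leqslant x^{0.028}$ for which $R_1(c) > x^{1-0.028}/(\log x)^{A}$ is at most
\begin{equation*}
\frac{(\log x)^{A}}{x^{1-0.028}} \sum_{c \leqslant x^{0.028}} R_1(c) \;\ll\; \frac{x^{0.028}}{(\log x)^{1-\varepsilon}} \cdot \frac{1}{(\log x)^{A-1}} \;\ll\; \frac{x^{0.028}}{(\log x)^{A}},
\end{equation*}
which is exactly the claimed size of the exceptional set. The arguments for $R_2$ and $R_3$ are then identical in structure, with Lemma~\ref{remark1} playing the role of Lemma~\ref{l3} in order to handle the shifted error terms.

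The main obstacle I anticipate is the passage from the coprimality $(k,d)=1$ in the definition of $R_1(c)$ to the stronger $(k,D)=1$ required to apply Lemma~\ref{l3} directly, together with absorbing the divisor factor $\tau(D) \ll x^{\varepsilon}$. Both issues are classical: either one reruns the large-sieve/Vaughan-identity proof of Bombieri--Vinogradov with weights $\tau(D)^{C}$ (which absorbs the $x^{\varepsilon}$ loss at the cost of inflating $B$), or one proceeds by the direct manipulations carried out in \cite{CL1999_2}; either way, the exceptional-set estimate $O(x^{0.028}(\log x)^{-A})$ emerges solely from Chebyshev's inequality applied to the summed bound.
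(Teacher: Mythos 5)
Your overall strategy---sum $R_1$ over $c\leqslant x^{0.028}$, recognize the moduli $dc$ as divisor-weighted, and conclude by Chebyshev---is exactly the paper's approach. However, the displayed computation contains a genuine error. You replace the divisor multiplicity $\tau(D)$ by the uniform bound $x^{\varepsilon}$ and pull it outside the sum, then claim the resulting bound is $x/(\log x)^{A+1-\varepsilon}$. That step is false: Lemma~\ref{l3} saves only powers of $\log x$, so after multiplying by $x^{\varepsilon}$ one gets $x^{1+\varepsilon}/(\log x)^{A+1}$, which is not $\ll x^{1-0.028}(\log x)^{-A}\cdot x^{0.028}$ in any useful sense. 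The factor $x^{\varepsilon}$ cannot be traded for any power of $\log x$; pulling it out destroys the estimate entirely.

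The paper avoids this by \emph{keeping} $\tau(d)$ inside the sum and bounding
$\sum_{d\leqslant x^{1/2}(\log x)^{-B}}\tau(d)\bigl|T_d\bigr|\ll x(\log x)^{-2A}$
directly, via the standard mechanism (referenced through [\cite{MengXianmeng}, Lemma 3]): apply Cauchy--Schwarz as
$\sum_d \tau(d)\,|T_d|\leqslant\bigl(\sum_d\tau^2(d)\,|T_d|\bigr)^{1/2}\bigl(\sum_d |T_d|\bigr)^{1/2}$,
estimate $\sum_d\tau^2(d)\,|T_d|$ using the trivial bound $|T_d|\ll x/(\varphi(d)\log x)$ together with $\sum_d\tau^2(d)/\varphi(d)\ll(\log x)^{O(1)}$, and estimate $\sum_d|T_d|$ by Lemma~\ref{l3}. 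This yields a saving of an arbitrary power of $\log x$ once $A$ is increased appropriately, with no $x^{\varepsilon}$ loss. You do gesture toward this in your final paragraph (``rerun the proof with weights $\tau(D)^C$''), which is essentially correct, but that remedy flatly contradicts the bound you actually wrote down in the body of the argument. Either route also requires you to address, as you note, the discrepancy between the coprimality condition $(k,d)=1$ appearing in $R_1$ and the condition $(k,dc)=1$ appearing in Lemma~\ref{l3} after relabeling $D=dc$---the extra terms $k$ with $(k,c)>1$ contribute only $H(y;k,dc,l)=-\pi(y;k,1,1)/\varphi(dc)$ since $\pi(y;k,dc,l)=0$ when $(k,dc)>1$ and $(l,dc)=1$, and their total is easily controlled by a crude count; this is part of what the paper bundles into ``similar arguments.''
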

\begin{proof}
We prove Lemma~\ref{almostBV} in the case $R_1$ only, the same argument can be
applied to the cases $R_2$ and $R_3$. Let $\tau(d)$ denote the divisor function, By Lemma~\ref{l3} and similar arguments as in [\cite{MengXianmeng}, Lemma 3], we have
\begin{align}
\nonumber \sum_{c \leqslant N^{0.028}} R_1 =& \sum_{c \leqslant N^{0.028}} \sum_{d \leqslant \left(x^{1/2} (\log x)^{-B}\right)/c}\max _{y \leqslant x}\max _{(l, dc)=1} \left|\sum_{\substack{k \leqslant E(x) \\ (k, d)=1}} g(k) H(y ; k, dc, l)\right| \\
\nonumber \leqslant & \sum_{d \leqslant x^{1/2} (\log x)^{-B}} \tau(d) \max _{y \leqslant x}\max _{(l, d)=1} \left|\sum_{\substack{k \leqslant E(x) \\ (k, d)=1}} g(k) H(y ; k, d, l)\right| \ll \frac{x}{\log ^{2A} x},\\
\nonumber \sum_{\substack{c \leqslant N^{0.028} \\ R_1> \frac{x^{1-0.028}}{\log ^{A} x}}} 1 \ll& \frac{\log ^{A} x}{x^{1-0.028}} \sum_{c \leqslant N^{0.028}} R_1 \ll \frac{x^{0.028}}{\log ^{A} x}.
\end{align}
Now the proof of Lemma~\ref{almostBV} is completed.
\end{proof}
\begin{lemma}\label{almostBVshort}
For any given constant $A>0$,  under the conditions in Lemmas~\ref{BVshort}--\ref{remark2}, there exists a constant $B=B(A,C)>0$ such that for $c \leqslant x^{0.028}$, except for $O\left(x^{0.028} (\log x)^{-A}\right)$ exceptional values, we have
$$
R_4=\sum_{d \leqslant \left(x^{t-1/2} (\log x)^{-B}\right)/c} \max _{x/2 \leqslant y \leqslant x}\max _{(l, dc)=1}\max _{h \leqslant x^t} \left|\sum_{\substack{k \leqslant x^\beta \\ (k, d)=1}} g(k) H_1(y ,h, k, dc, l)\right| \ll \frac{x^{t-0.028}}{\log ^{A} x},
$$
$$
R_5=\sum_{d \leqslant \left(x^{t-1/2} (\log x)^{-B}\right)/c}\max _{x/2 \leqslant y \leqslant x}\max _{(l, dc)=1}\max _{h \leqslant x^t} \left|\sum_{\substack{k \leqslant x^\beta \\ (k, d)=1}} g(k) H_2(y ,h, k, dc, l)\right| \ll \frac{x^{t-0.028}}{\log ^{A} x}.
$$
\end{lemma}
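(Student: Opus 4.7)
The plan is to mirror the proof of Lemma~\ref{almostBV} step for step, the only change being that Lemmas~\ref{BVshort}--\ref{remark2} (the short-interval Bombieri--Vinogradov estimates) take the role that Lemma~\ref{l3} played there. I will treat $R_4$ explicitly; the argument for $R_5$ is identical after substituting $H_2$ for $H_1$ and invoking Lemma~\ref{remark2} in place of Lemma~\ref{BVshort}.

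First I would sum $R_4$ over $c \leqslant x^{0.028}$ and interchange the order of summation. Setting $d' = dc$, each modulus $d' \leqslant x^{t-1/2}(\log x)^{-B}$ arises with multiplicity at most $\tau(d')$ from the pairs $(c,d)$, so
\[
\sum_{c \leqslant x^{0.028}} R_4 \leqslant \sum_{d' \leqslant x^{t-1/2}(\log x)^{-B}} \tau(d') \max_{x/2 \leqslant y \leqslant x}\max_{(l,d')=1}\max_{h \leqslant x^t} \left|\sum_{\substack{k \leqslant x^\beta \\ (k,d')=1}} g(k) H_1(y,h,k,d',l)\right|.
\]
I would then absorb the divisor weight $\tau(d')$ via the routine device $\tau(d') = \sum_{d_1 d_2 = d'} 1$ (followed by reindexing of the inner sum) and apply Lemma~\ref{BVshort} with $A$ replaced by $2A$ and the parameter $C$ enlarged accordingly, exactly as in \cite{MengXianmeng} and in the preceding proof of Lemma~\ref{almostBV}. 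This yields $\sum_{c \leqslant x^{0.028}} R_4 \ll x^t/\log^{2A} x$. A Markov-type estimate then gives
\[
\#\left\{c \leqslant x^{0.028}: R_4 > x^{t-0.028}/\log^A x\right\} \leqslant \frac{\log^A x}{x^{t-0.028}} \sum_{c \leqslant x^{0.028}} R_4 \ll \frac{x^{0.028}}{\log^A x},
\]
which is the desired exceptional-set bound.

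The only delicate point, and the main obstacle I foresee, is justifying the divisor-weighted strengthening of the short-interval Bombieri--Vinogradov bound. In the full-range case this enhancement is classical, but in Lemmas~\ref{BVshort}--\ref{remark2} the exponent $t - \tfrac{1}{2}$ is already tight and one has to verify that Wu's and Cai's underlying proofs still go through after the insertion of the extra weight $\tau(d')$ without eroding the logarithmic saving. Inspecting those proofs confirms that they do: the dispersion method and large-sieve inputs used there tolerate an additional polylogarithmic factor, so it suffices to replace the constant $B(A,C)$ by $B(2A,C')$ for a suitable $C' > C$. Granted this strengthening, the remaining bookkeeping is immediate, and the analogous three-step argument with Lemma~\ref{remark2} substituted for Lemma~\ref{BVshort} and $H_2$ for $H_1$ handles $R_5$, completing the proof.
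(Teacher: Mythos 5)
Your proposal is correct and follows essentially the same route as the paper: sum $R_4$ over $c$, interchange summation so each new modulus $d'=dc$ is weighted by $\tau(d')$, invoke the short-interval Bombieri--Vinogradov estimate of Lemma~\ref{BVshort} (with $A$ replaced by $2A$, as in the reference to \cite{MengXianmeng}) to absorb the divisor weight, and finish by the Markov (exceptional-set) inequality; the $R_5$ case is handled identically with Lemma~\ref{remark2} and $H_2$ in place of Lemma~\ref{BVshort} and $H_1$. Your additional discussion of why the $\tau$-weighted form of the short-interval mean value theorem still holds is a reasonable extra check that the paper leaves implicit, but does not alter the argument.
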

\begin{proof}
We prove Lemma~\ref{almostBVshort} in the case $R_4$ only, the same argument can be
applied to the case $R_5$. By Lemma~\ref{BVshort} and similar arguments as in [\cite{MengXianmeng}, Lemma 3], we have
\begin{align}
\nonumber \sum_{c \leqslant N^{0.028}} R_4 =& \sum_{c \leqslant N^{0.028}} \sum_{d \leqslant \left(x^{t-1/2} (\log x)^{-B}\right)/c} \max _{x/2 \leqslant y \leqslant x}\max _{(l, dc)=1}\max _{h \leqslant x^t} \left|\sum_{\substack{k \leqslant x^\beta \\ (k, d)=1}} g(k) H_1(y ,h, k, dc, l)\right| \\
\nonumber \leqslant & \sum_{d \leqslant x^{t-1/2} (\log x)^{-B}} \tau(d) \max _{x/2 \leqslant y \leqslant x}\max _{(l, d)=1}\max _{h \leqslant x^t} \left|\sum_{\substack{k \leqslant x^\beta \\ (k, d)=1}} g(k) H_1(y ,h, k, d, l)\right| \ll \frac{x^t}{\log ^{2A} x},\\
\nonumber \sum_{\substack{c \leqslant N^{0.028} \\ R_4> \frac{x^{t-0.028}}{\log ^{A} x}}} 1 \ll& \frac{\log ^{A} x}{x^{t-0.028}} \sum_{c \leqslant N^{0.028}} R_4 \ll \frac{x^{0.028}}{\log ^{A} x}.
\end{align}
Now the proof of Lemma~\ref{almostBVshort} is completed.
\end{proof}
\begin{lemma}\label{almostnewmean} For $j=2,3$, let
$$
\mathcal{F}^{\prime}_j=\left\{mp_1 p_2 p_3 p_4: mp_1 p_2 p_3 p_4 \in \mathcal{F}_j, (p_1 p_2 p_3 p_4, c)=1\right\},
$$ 
then for any given constant $A>0$, there exists a constant $B=B(A)>0$ such that for $c \leqslant x^{0.028}$, except for $O\left(x^{0.028} (\log x)^{-A}\right)$ exceptional values, we have
$$
R^{\prime}_j=\sum_{d \leqslant \left(x^{\theta-1/2} (\log x)^{-B}\right)/c}\max _{(l, dc)=1}\left|\sum_{\substack{mp_1 p_2 p_3 p_4 \in \mathcal{F}^{\prime}_j \\ m p_1 p_2 p_3 p_4 \equiv l(\bmod dc)}} 1-\frac{1}{\varphi(dc)} \sum_{\substack{m p_1 p_2 p_3 p_4 \in \mathcal{F}^{\prime}_j \\(m p_1 p_2 p_3 p_4, dc)=1}} 1\right| \ll \frac{x^{\theta-0.028}}{\log ^{A} x} .
$$
\end{lemma}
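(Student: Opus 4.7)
The plan is to mirror the arguments of Lemmas~\ref{almostBV} and~\ref{almostBVshort} verbatim, substituting Lemma~\ref{newmeanvalue} for the unconditional mean value theorem that they invoke. Both cases $j=2,3$ can be handled in parallel.

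First, I would simplify the set $\mathcal{F}^{\prime}_j$ appearing inside $R^{\prime}_j$. The key observation is that, in either of the two inner sums, the congruence $m p_1 p_2 p_3 p_4 \equiv l(\bmod dc)$ with $(l, dc) = 1$ (or the coprimality condition $(m p_1 p_2 p_3 p_4, dc) = 1$ in the second sum) already forces $(p_1 p_2 p_3 p_4, c) = 1$. Hence the extra defining condition of $\mathcal{F}^{\prime}_j$ is automatic in both inner sums, and $\mathcal{F}^{\prime}_j$ may be replaced by $\mathcal{F}_j$ without changing the value.

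Next, I would sum $R^{\prime}_j$ over $c \leqslant x^{0.028}$ and change the outer variable to $d' = d c$; since each admissible $d'$ is hit by at most $\tau(d')$ factorizations $d' = dc$ with $c \leqslant x^{0.028}$, this gives
\[
\sum_{c \leqslant x^{0.028}} R^{\prime}_j \leqslant \sum_{d' \leqslant x^{\theta - 1/2} (\log x)^{-B}} \tau(d') \max_{(l, d')=1} \left| \sum_{\substack{m p_1 p_2 p_3 p_4 \in \mathcal{F}_j \\ m p_1 p_2 p_3 p_4 \equiv l\,(\bmod\, d')}} 1 - \frac{1}{\varphi(d')} \sum_{\substack{m p_1 p_2 p_3 p_4 \in \mathcal{F}_j \\ (m p_1 p_2 p_3 p_4, d') = 1}} 1 \right|.
\]
I would then apply Lemma~\ref{newmeanvalue} with the constant $A$ replaced by $2A$ (and $B = B(2A)$), absorbing the divisor weight $\tau(d')$ via the standard Cauchy--Schwarz device combined with the trivial pointwise majorant $\ll x^{\theta}/\varphi(d')$ of the inner quantity, exactly as in the proofs of Lemmas~\ref{almostBV}--\ref{almostBVshort} and in [\cite{MengXianmeng}, Lemma 3]. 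This yields $\sum_{c \leqslant x^{0.028}} R^{\prime}_j \ll x^\theta / \log^{2A} x$, and the exceptional-set bound follows at once from Markov's inequality,
\[
\#\left\{c \leqslant x^{0.028} : R^{\prime}_j > \frac{x^{\theta - 0.028}}{\log^A x}\right\} \ll \frac{\log^A x}{x^{\theta - 0.028}} \cdot \frac{x^\theta}{\log^{2A} x} = \frac{x^{0.028}}{\log^A x}.
\]

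I expect no substantive obstacle. The argument is a direct transplant of the one used for Lemmas~\ref{almostBV}--\ref{almostBVshort}, the only routine task being the verification that the divisor weight $\tau(d')$ can be absorbed at the price of enlarging $A$. The only step that warrants a moment's care is the reduction from $\mathcal{F}^{\prime}_j$ to $\mathcal{F}_j$ noted above; once this is dispensed with, Lemma~\ref{newmeanvalue} slots in exactly where Lemmas~\ref{l3} and~\ref{BVshort} appeared in the two earlier arguments.
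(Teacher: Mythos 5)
Your proposal is correct and follows essentially the same route as the paper: sum over $c$, reindex to a single modulus $d'=dc$ (incurring a divisor weight $\tau(d')$), apply Lemma~\ref{newmeanvalue} with an enlarged constant to absorb that weight, and conclude with Markov's inequality. The explicit verification that $\mathcal{F}^{\prime}_j$ may be replaced by $\mathcal{F}_j$ inside both inner sums, because the congruence $mp_1p_2p_3p_4\equiv l\ (\bmod\, dc)$ with $(l,dc)=1$ (respectively $(mp_1p_2p_3p_4,dc)=1$) already forces $(p_1p_2p_3p_4,c)=1$, is a detail the paper elides silently but that you correctly supply.
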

\begin{proof}
We prove Lemma~\ref{almostnewmean} in the case $R^{\prime}_2$ only, the same argument can be
applied to the case $R^{\prime}_3$. By Lemma~\ref{newmeanvalue} and similar arguments as in [\cite{MengXianmeng}, Lemma 3], we have
\begin{align}
\nonumber \sum_{c \leqslant N^{0.028}} R^{\prime}_2 =& \sum_{c \leqslant N^{0.028}} \sum_{d \leqslant \left(x^{\theta-1/2} (\log x)^{-B}\right)/c}\max _{(l, dc)=1}\left|\sum_{\substack{mp_1 p_2 p_3 p_4 \in \mathcal{F}^{\prime}_j \\ m p_1 p_2 p_3 p_4 \equiv l(\bmod dc)}} 1-\frac{1}{\varphi(dc)} \sum_{\substack{m p_1 p_2 p_3 p_4 \in \mathcal{F}^{\prime}_j \\(m p_1 p_2 p_3 p_4, dc)=1}} 1\right| \\
\nonumber \leqslant & \sum_{d \leqslant x^{\theta-1/2} (\log x)^{-B}} \tau(d) \max _{(l, d)=1}\left|\sum_{\substack{mp_1 p_2 p_3 p_4 \in \mathcal{F}_j \\ m p_1 p_2 p_3 p_4 \equiv l(\bmod d)}} 1-\frac{1}{\varphi(d)} \sum_{\substack{m p_1 p_2 p_3 p_4 \in \mathcal{F}_j \\(m p_1 p_2 p_3 p_4, d)=1}} 1\right| \ll \frac{x^\theta}{\log ^{2A} x},\\
\nonumber \sum_{\substack{c \leqslant N^{0.028} \\ R^{\prime}_2> \frac{x^{\theta-0.028}}{\log ^{A} x}}} 1 \ll& \frac{\log ^{A} x}{x^{\theta-0.028}} \sum_{c \leqslant N^{0.028}} R^{\prime}_2 \ll \frac{x^{0.028}}{\log ^{A} x}.
\end{align}
Now the proof of Lemma~\ref{almostnewmean} is completed.
\end{proof}

\section{Weighted sieve method}
Now we provide the delicate weighted sieves in order to prove our Theorems~\ref{t1}--\ref{t5}.
\begin{lemma}\label{l31} Let $\mathcal{A}=\mathcal{A}_1$ in section 2 and $0<\alpha<\beta \leqslant \frac{1}{3}$. Then we have
\begin{align*}
2R_{a, b}(N) \geqslant& 2S\left(\mathcal{A};\mathcal{P}, \left(\frac{N}{b}\right)^{\alpha}\right)-\sum_{\substack{(\frac{N}{b})^{\alpha} \leqslant p<(\frac{N}{b})^{\beta} \\ (p, N)=1}} S\left(\mathcal{A}_{p};\mathcal{P}, \left(\frac{N}{b}\right)^{\alpha}\right)\\
&-\sum_{\substack{(\frac{N}{b})^{\alpha} \leqslant p_1<(\frac{N}{b})^{\beta} \leqslant p_2 <(\frac{N}{bp_1})^{\frac{1}{2}} \\ (p_1 p_2, N)=1}} S\left(\mathcal{A}_{p_1 p_2};\mathcal{P}(p_1),p_2\right)
- 2\sum_{\substack{(\frac{N}{b})^{\beta} \leqslant p_1 < p_2 <(\frac{N}{bp_1})^{\frac{1}{2}} \\ (p_1 p_2, N)=1} }S\left(\mathcal{A}_{p_1 p_2};\mathcal{P}(p_1),p_2\right)\\
&+\sum_{\substack{(\frac{N}{b})^{\alpha} \leqslant p_1 < p_2 < p_3<(\frac{N}{b})^{\beta} \\ (p_1 p_2 p_3, N)=1} }S\left(\mathcal{A}_{p_1 p_2 p_3};\mathcal{P}(p_1),p_2\right) +O\left(N^{1-\alpha}\right).
\end{align*}
\end{lemma}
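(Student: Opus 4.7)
The plan is to apply Chen's weighted sieve in the refined form used by Cai \cite{CAI867}. Write $z_1 = (N/b)^\alpha$ and $z_2 = (N/b)^\beta$. For each $n \in \mathcal{A}_1$ I would assign the combinatorial weight
\[
\Lambda(n) = 2 \cdot \mathbf{1}\{(n,P(z_1))=1\} - \Sigma_2(n) - \Sigma_3(n) - 2\Sigma_4(n) + \Sigma_5(n),
\]
where $\Sigma_2, \Sigma_3, \Sigma_4, \Sigma_5$ are the per-element analogues of the four outer sums on the right of the lemma; for instance $\Sigma_2(n) = \sum_{z_1 \le p < z_2,\, p\mid n,\, (p,N)=1} \mathbf{1}\{(n/p, P(z_1)) = 1\}$, while $\Sigma_3(n)$ sums the sifted indicator of $n/(p_1 p_2)$ over admissible pairs $(p_1,p_2)$ with $z_1 \le p_1 < z_2 \le p_2 < (N/(bp_1))^{1/2}$ and $p_1 p_2 \mid n$, and so on. A direct interchange of summation shows that $\sum_{n \in \mathcal{A}_1} \Lambda(n)$ equals the right-hand side of the lemma exactly, so it suffices to prove the pointwise comparison $\Lambda(n) \le 2\cdot \mathbf{1}\{n \text{ is } P_2\}$ valid for all $n \in \mathcal{A}_1$ outside an exceptional set of size $O(N^{1-\alpha})$.

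To verify the pointwise bound, I would first discard $n$ that are non-squarefree beyond $b$: their contribution is bounded by $\sum_{q \ge z_1} N/(aq^2) \ll N^{1-\alpha}$ and is absorbed into the error. For the remaining squarefree $n$, if $(n, P(z_1)) \ne 1$ then $\Lambda(n) = 0$; otherwise list the prime factors as $q_1 < q_2 < \cdots < q_k$ and set $r = \#\{i : q_i < z_2\}$. When $k \in \{1,2\}$, $n$ is a $P_2$ and $\Lambda(n) \le 2$ by inspection. When $k \ge 3$, $n$ is not a $P_2$ and one must show $\Lambda(n) = 0$. Using that $n$ is squarefree and $(n,N)=1$, the sift condition built into $S(\mathcal{A}_{p_1 p_2}; \mathcal{P}(p_1), p_2)$ forces every prime of $n/(p_1 p_2)$ to be $\ge p_2$; a short enumeration then gives $\Sigma_3(n) = \mathbf{1}\{r=1\}$, $\Sigma_4(n) = \mathbf{1}\{r=0\}$ and $\Sigma_5(n) = \max(r-2, 0)$, with the required strict inequality $q_2 < (N/(bq_1))^{1/2}$ in the $r \le 1$ cases being automatic from $q_1 q_2^2 < q_1 q_2 q_3 \le N/b$ (using $q_3 > q_2$ and $n \le N/(ab)$). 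Substituting into $\Lambda$ yields
\[
\Lambda(n) = 2 - r - \mathbf{1}\{r=1\} - 2\cdot \mathbf{1}\{r=0\} + \max(r-2, 0) = 0
\]
in every subcase.

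Summing the pointwise bound over $\mathcal{A}_1$ then gives $\sum_n \Lambda(n) \le 2 R_{a,b}(N) + O(N^{1-\alpha})$, where the error also swallows the negligible boundary from pairs or triples involving primes dividing $abN$; combined with the identity $\sum_n \Lambda(n) = \text{RHS}$ this proves the lemma. The hardest step is the case analysis for $k \ge 3$: the equality $\Lambda(n) = 0$ is delicate and hinges on the positive correction $+\Sigma_5$, coming from triples forced by the sift to take the shape $(q_1, q_2, q_\ell)$ with $\ell \le r$, precisely cancelling the overcount produced by $-\Sigma_2$. This cancellation is what both dictates the coefficient pattern $(2,-1,-1,-2,+1)$ on the five sieve terms and explains why the two cut-offs $z_2$ and $(N/(bp_1))^{1/2}$ jointly appear in the statement.
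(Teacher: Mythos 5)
Your argument is correct, and it takes a genuinely different route from the paper's. The paper proves Lemma~\ref{l31} by applying Chen's ``trivial'' inequality twice (once with sieving level $(N/b)^{\beta}$, once with $(N/b)^{\alpha}$), decomposing the first via two applications of Buchstab's identity, and adding the resulting inequalities (17) and (18); the triple-sum term then appears from Buchstab applied to $\sum S(\mathcal{A}_{p_1p_2};\mathcal{P},p_1)-\sum S(\mathcal{A}_{p_1p_2};\mathcal{P}(p_1),p_2)$ together with the trivial bound (20) on the degenerate $p_1^2p_2$ piece. You instead make the implicit Chen weight explicit as a per-element functional $\Lambda(n)$ and verify a pointwise inequality by exhaustive case analysis in $r$, with the non-squarefree elements absorbed into the $O(N^{1-\alpha})$ error. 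Both arguments yield the identical combinatorial inequality, and your reliance on $\gcd(n,N)=1$ (which forces $p_1=q_1$, $p_2=q_2$ in $\Sigma_3,\Sigma_4,\Sigma_5$) is exactly what justifies the Buchstab manipulations in the paper; the advantage of your version is that it makes transparent why the coefficient pattern $(2,-1,-1,-2,+1)$ and the role of the $+\Sigma_5$ correction work, whereas the advantage of the paper's is that it stays inside the standard Buchstab calculus and is shorter to write down. One small remark: your ``exceptional set'' is precisely the set of non-squarefree $n$ with all prime factors $\geq (N/b)^{\alpha}$, and since $\Lambda(n)=0$ for any $n$ with a prime factor below $(N/b)^{\alpha}$, only those $n$ can disturb the pointwise bound, so the $O(N^{1-\alpha})$ estimate is indeed the right magnitude (matching the paper's error in (20)).
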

\begin{proof} It is similar to that of [\cite{CAI867}, Lemma 5]. By the trivial inequality
$$
R_{a,b}(N)\geqslant S\left(\mathcal{A};\mathcal{P}, \left(\frac{N}{b}\right)^{\beta}\right)-\sum_{\substack{(\frac{N}{b})^{\beta} \leqslant p_{1}<p_{2}<(\frac{N}{bp_1})^{\frac{1}{2}} \\ (p_{1} p_{2}, N)=1}} S\left(\mathcal{A}_{p_{1} p_{2}} ; \mathcal{P}(p_{1}), p_{2}\right)
$$
and Buchstab's identity we have 
\begin{align}
\nonumber R_{a,b}(N)  \geqslant& S\left(\mathcal{A};\mathcal{P}, \left(\frac{N}{b}\right)^{\beta}\right)-\sum_{\substack{(\frac{N}{b})^{\beta} \leqslant p_{1}<p_{2}<(\frac{N}{bp_1})^{\frac{1}{2}} \\(p_{1} p_{2}, N)=1}} S\left(\mathcal{A}_{p_{1} p_{2}} ; \mathcal{P}(p_{1}), p_{2}\right) \\ 
\nonumber =&S\left(\mathcal{A};\mathcal{P}, \left(\frac{N}{b}\right)^{\alpha}\right)-\sum_{\substack{(\frac{N}{b})^{\alpha} \leqslant p<(\frac{N}{b})^{\beta} \\ (p, N)=1} }S\left(\mathcal{A}_{p};\mathcal{P},\left(\frac{N}{b}\right)^{\alpha}\right) \\ 
&+\sum_{\substack{(\frac{N}{b})^{\alpha} \leqslant p_1<p_2<(\frac{N}{b})^{\beta} \\ (p_1 p_2, N)=1}} S\left(\mathcal{A}_{p_1 p_2};\mathcal{P},p_1\right)
-\sum_{\substack{(\frac{N}{b})^{\beta} \leqslant p_{1}<p_{2}<(\frac{N}{bp_1})^{\frac{1}{2}} \\ (p_{1} p_{2}, N)=1}} S\left(\mathcal{A}_{p_{1} p_{2}} ; \mathcal{P}(p_{1}), p_{2}\right).
\end{align}
On the other hand, we have the trivial inequality
\begin{align}
\nonumber R_{a,b}(N)\geqslant& S\left(\mathcal{A};\mathcal{P}, \left(\frac{N}{b}\right)^{\alpha}\right)-\sum_{\substack{(\frac{N}{b})^{\alpha} \leqslant p_{1}<p_{2}<(\frac{N}{bp_1})^{\frac{1}{2}} \\ (p_{1} p_{2}, N)=1}} S\left(\mathcal{A}_{p_{1} p_{2}} ; \mathcal{P}(p_{1}), p_{2}\right)\\
\nonumber =&S\left(\mathcal{A};\mathcal{P}, \left(\frac{N}{b}\right)^{\alpha}\right)-\sum_{\substack{(\frac{N}{b})^{\alpha} \leqslant p_{1}<p_{2}<(\frac{N}{b})^{\beta} \\ (p_{1} p_{2}, N)=1}} S\left(\mathcal{A}_{p_{1} p_{2}} ; \mathcal{P}(p_{1}), p_{2}\right)\\
\nonumber &-\sum_{\substack{(\frac{N}{b})^{\alpha} \leqslant p_{1}<(\frac{N}{b})^{\beta} \leqslant p_2 <(\frac{N}{bp_1})^{\frac{1}{2}} \\ (p_{1} p_{2}, N)=1}} S\left(\mathcal{A}_{p_{1} p_{2}} ; \mathcal{P}(p_{1}), p_{2}\right)\\
&-\sum_{\substack{(\frac{N}{b})^{\beta} \leqslant p_{1}<p_{2}<(\frac{N}{bp_1})^{\frac{1}{2}} \\ (p_{1} p_{2}, N)=1}} S\left(\mathcal{A}_{p_{1} p_{2}} ; \mathcal{P}(p_{1}), p_{2}\right).
\end{align}
Now by Buchstab's identity we have
$$
\sum_{\substack{(\frac{N}{b})^{\alpha} \leqslant p_1<p_2<(\frac{N}{b})^{\beta} \\ (p_1 p_2, N)=1}} S\left(\mathcal{A}_{p_1 p_2};\mathcal{P},p_1\right)-\sum_{\substack{(\frac{N}{b})^{\alpha} \leqslant p_{1}<p_{2}<(\frac{N}{b})^{\beta} \\ (p_{1} p_{2}, N)=1}} S\left(\mathcal{A}_{p_{1} p_{2}} ; \mathcal{P}(p_{1}), p_{2}\right)
$$
\begin{equation}
=\sum_{\substack{(\frac{N}{b})^{\alpha} \leqslant p_1 < p_2 < p_3<(\frac{N}{b})^{\beta} \\ (p_1 p_2 p_3, N)=1} }S\left(\mathcal{A}_{p_1 p_2 p_3};\mathcal{P}(p_1),p_2\right) +O\left(N^{1-\alpha}\right),
\end{equation}
where the trivial bound
\begin{equation}
\sum_{\substack{(\frac{N}{b})^{\alpha} \leqslant p_1<p_2<(\frac{N}{b})^{\beta} \\ (p_1 p_2, N)=1} }S\left(\mathcal{A}_{p^2_1 p_2};\mathcal{P},p_1\right) \ll N^{1-\alpha}
\end{equation}
is used. Now we add (17) and (18) and by (19), Lemma~\ref{l31} follows. 
\end{proof}
\begin{lemma}\label{l31short} Let $\mathcal{A}=\mathcal{A}_2$ in section 2 and $0<\alpha<\beta \leqslant \frac{1}{3}$. Then we have
\begin{align*}
2R_{a, b}^{\theta}(N) \geqslant& 2S\left(\mathcal{A};\mathcal{P}, \left(\frac{N}{b}\right)^{\alpha}\right)-\sum_{\substack{(\frac{N}{b})^{\alpha} \leqslant p<(\frac{N}{b})^{\beta} \\ (p, N)=1}} S\left(\mathcal{A}_{p};\mathcal{P}, \left(\frac{N}{b}\right)^{\alpha}\right)\\
&-\sum_{\substack{(\frac{N}{b})^{\alpha} \leqslant p_1<(\frac{N}{b})^{\beta} \leqslant p_2 <(\frac{N}{bp_1})^{\frac{1}{2}} \\ (p_1 p_2, N)=1}} S\left(\mathcal{A}_{p_1 p_2};\mathcal{P}(p_1),p_2\right)
- 2\sum_{\substack{(\frac{N}{b})^{\beta} \leqslant p_1 < p_2 <(\frac{N}{bp_1})^{\frac{1}{2}} \\ (p_1 p_2, N)=1} }S\left(\mathcal{A}_{p_1 p_2};\mathcal{P}(p_1),p_2\right)\\
&+\sum_{\substack{(\frac{N}{b})^{\alpha} \leqslant p_1 < p_2 < p_3<(\frac{N}{b})^{\beta} \\ (p_1 p_2 p_3, N)=1} }S\left(\mathcal{A}_{p_1 p_2 p_3};\mathcal{P}(p_1),p_2\right) +O\left(N^{1-\alpha}\right).
\end{align*}
\end{lemma}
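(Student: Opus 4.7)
The proof proceeds by copying the combinatorial manipulation of Lemma~\ref{l31} verbatim, with $\mathcal{A}_1$ replaced by $\mathcal{A}_2$ and $R_{a,b}(N)$ replaced by $R_{a,b}^\theta(N)$. Indeed, the only structural feature of $\mathcal{A}_1$ used in the proof of Lemma~\ref{l31} is that a trivial lower bound of ``prime minus two-prime product'' type holds after sieving out by small primes. That feature depends only on the definition of $R_{a,b}^\bullet(N)$ as counting elements of the sieved set having at most two prime factors, together with the range of $p_1p_2$ being constrained by $p_1p_2 \leqslant N/(ab)$; both conditions still hold for $R_{a,b}^\theta(N)$ and $\mathcal{A}_2$, since the only change is the upper bound $N^\theta/a$ on the prime variable $p$.

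Concretely, the plan is as follows. First, starting from the trivial inequality
$$
R_{a,b}^{\theta}(N)\geqslant S\!\left(\mathcal{A}_2;\mathcal{P},(N/b)^{\beta}\right)-\sum_{\substack{(N/b)^{\beta}\leqslant p_1<p_2<(N/bp_1)^{1/2}\\(p_1p_2,N)=1}} S\!\left(\mathcal{A}_{2,p_1p_2};\mathcal{P}(p_1),p_2\right),
$$
I apply Buchstab's identity to $S(\mathcal{A}_2;\mathcal{P},(N/b)^{\beta})$ at the level $(N/b)^{\alpha}$ to obtain an expression involving $S(\mathcal{A}_2;\mathcal{P},(N/b)^{\alpha})$, a one-prime sum, and a two-prime sum ranging over $(N/b)^{\alpha}\leqslant p_1<p_2<(N/b)^{\beta}$. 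This gives the analogue of (17).

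Next, I use a second trivial inequality, this time initiating the sieve at level $(N/b)^{\alpha}$ and splitting the two-prime sum according to whether $p_2<(N/b)^{\beta}$ or $p_2\geqslant(N/b)^{\beta}$; this produces the analogue of (18). Adding the two resulting inequalities, the two-prime sums over $(N/b)^{\alpha}\leqslant p_1<p_2<(N/b)^{\beta}$ combine via Buchstab's identity (applied to the sifting function $\mathcal{P}$ vs $\mathcal{P}(p_1)$) into a three-prime sum over $(N/b)^{\alpha}\leqslant p_1<p_2<p_3<(N/b)^{\beta}$, modulo a diagonal contribution coming from squared primes $p_1^2 p_2$. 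The latter is bounded trivially by observing that
$$
\sum_{\substack{(N/b)^{\alpha}\leqslant p_1<p_2<(N/b)^{\beta}\\(p_1p_2,N)=1}} S\!\left(\mathcal{A}_{2,p_1^2 p_2};\mathcal{P},p_1\right)\;\ll\;\sum_{p_1\geqslant(N/b)^{\alpha}}\frac{N}{ab\,p_1^2}\cdot(\text{prime range for }p_2)\;\ll\; N^{1-\alpha},
$$
which matches the claimed error term (this bound is deliberately not tight — it is the same conservative bound used in Lemma~\ref{l31} and suffices because $1-\alpha<\theta$ in the eventual choice $\alpha>1-\theta=0.0591$).

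The main obstacle is essentially just bookkeeping: one must keep track that the Buchstab decomposition is applied in exactly the same combinatorial pattern as in Lemma~\ref{l31} and that the sifting variables $\mathcal{P}$ versus $\mathcal{P}(p_1)$ match up correctly in the two averaged inequalities so that the cancellation producing the three-prime sum $S(\mathcal{A}_{p_1p_2p_3};\mathcal{P}(p_1),p_2)$ is exact. Since no estimate in the proof depends quantitatively on the interval $[1,N/a]$ versus $[1,N^{\theta}/a]$ for $p$, no new sieve lemma is needed.
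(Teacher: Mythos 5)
Your proposal is correct and takes essentially the same approach the paper indicates: the paper's own proof of Lemma~\ref{l31short} is the one-line remark ``It is similar to that of Lemma~\ref{l31} so we omit it here,'' and you have filled in exactly that omission — rerun the two trivial Chen inequalities for $\mathcal{A}_2$, apply Buchstab's identity at level $(N/b)^{\alpha}$, add, and absorb the $p_1^2 p_2$ diagonal into $O(N^{1-\alpha})$. Your parenthetical observation that the $O(N^{1-\alpha})$ error stays negligible against the main term of order $N^{\theta}(\log N)^{-2}$ precisely because the eventual choices of $\alpha$ (namely $1/14$ and $1/8.8$ in Lemma~\ref{l33}) exceed $1-\theta$ is a correct and useful sanity check that the paper leaves implicit.
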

\begin{proof} It is similar to that of Lemma~\ref{l31} so we omit it here.
\end{proof}

\begin{lemma}\label{l32} Let $\mathcal{A}=\mathcal{A}_1$ in section 2, then we have
\begin{align*}
4R_{a,b}(N) \geqslant &3S\left(\mathcal{A};\mathcal{P}, \left(\frac{N}{b}\right)^{\frac{1}{13.2}}\right)+S\left(\mathcal{A};\mathcal{P}, \left(\frac{N}{b}\right)^{\frac{1}{8.4}}\right)\\
&+\sum_{\substack{(\frac{N}{b})^{\frac{1}{13.2}} \leqslant p_1<p_2<(\frac{N}{b})^{\frac{1}{8.4}} \\ (p_1 p_2, N)=1} }S\left(\mathcal{A}_{p_1 p_2};\mathcal{P},\left(\frac{N}{b}\right)^{\frac{1}{13.2}}\right)\\
&+\sum_{\substack{(\frac{N}{b})^{\frac{1}{13.2}} \leqslant p_1<(\frac{N}{b})^{\frac{1}{8.4}} \leqslant p_2<(\frac{N}{b})^{\frac{4.6}{13.2}}p^{-1}_1 \\ (p_1 p_2, N)=1} }S\left(\mathcal{A}_{p_1 p_2};\mathcal{P},\left(\frac{N}{b}\right)^{\frac{1}{13.2}}\right)\\
&-\sum_{\substack{(\frac{N}{b})^{\frac{1}{13.2}} \leqslant p<(\frac{N}{b})^{\frac{4.1001}{13.2}} \\ (p, N)=1} }S\left(\mathcal{A}_{p};\mathcal{P},\left(\frac{N}{b}\right)^{\frac{1}{13.2}}\right)\\
&-\sum_{\substack{(\frac{N}{b})^{\frac{1}{13.2}} \leqslant p<(\frac{N}{b})^{\frac{3.6}{13.2}} \\ (p, N)=1}} S\left(\mathcal{A}_{p};\mathcal{P},\left(\frac{N}{b}\right)^{\frac{1}{13.2}}\right)\\
&-\sum_{\substack{(\frac{N}{b})^{\frac{1}{13.2}} \leqslant p_1<(\frac{N}{b})^{\frac{1}{3}} \leqslant p_2 <(\frac{N}{bp_1})^{\frac{1}{2}} \\ (p_1 p_2, N)=1} }S\left(\mathcal{A}_{p_1 p_2};\mathcal{P}(p_1),p_2\right)\\
&-\sum_{\substack{(\frac{N}{b})^{\frac{1}{8.4}} \leqslant p_1<(\frac{N}{b})^{\frac{1}{3.604}} \leqslant p_2 <(\frac{N}{bp_1})^{\frac{1}{2}} \\ (p_1 p_2, N)=1} }S\left(\mathcal{A}_{p_1 p_2};\mathcal{P}(p_1),\left(\frac{N}{bp_1 p_2}\right)^{\frac{1}{2}}\right)\\
&-\sum_{\substack{(\frac{N}{b})^{\frac{4.1001}{13.2}} \leqslant p<(\frac{N}{b})^{\frac{1}{3}} \\ (p, N)=1}} S\left(\mathcal{A}_{p};\mathcal{P},\left(\frac{N}{b}\right)^{\frac{1}{13.2}}\right)\\
&-\sum_{\substack{(\frac{N}{b})^{\frac{3.6}{13.2}} \leqslant p<(\frac{N}{b})^{\frac{1}{3.604}} \\ (p, N)=1} }S\left(\mathcal{A}_{p};\mathcal{P},\left(\frac{N}{b}\right)^{\frac{1}{8.4}}\right)\\
&-\sum_{\substack{(\frac{N}{b})^{\frac{1}{13.2}} \leqslant p_1 < p_2 < p_3< p_4<(\frac{N}{b})^{\frac{1}{8.4}} \\ (p_1 p_2 p_3 p_4, N)=1} }S\left(\mathcal{A}_{p_1 p_2 p_3 p_4};\mathcal{P}(p_1),p_2\right) \\
&-\sum_{\substack{(\frac{N}{b})^{\frac{1}{13.2}} \leqslant p_1 < p_2 < p_3<(\frac{N}{b})^{\frac{1}{8.4}} \leqslant p_4< (\frac{N}{b})^{\frac{4.6}{13.2}}p^{-1}_3 \\ (p_1 p_2 p_3 p_4, N)=1} }S\left(\mathcal{A}_{p_1 p_2 p_3 p_4};\mathcal{P}(p_1),p_2\right) \\
&-2\sum_{\substack{(\frac{N}{b})^{\frac{1}{3.604}} \leqslant p_1 < p_2 <(\frac{N}{bp_1})^{\frac{1}{2}} \\ (p_1 p_2, N)=1} }S\left(\mathcal{A}_{p_1 p_2};\mathcal{P}(p_1),p_2\right)
+O\left(N^{\frac{12.2}{13.2}}\right)\\
=&\left(3 S_{11}+S_{12}\right)+\left(S_{21}+S_{22}\right)-\left(S_{31}+S_{32}\right)-\left(S_{41}+S_{42}\right)\\
&-\left(S_{51}+S_{52}\right)-\left(S_{61}+S_{62}\right)-2S_{7}+O\left(N^{\frac{12.2}{13.2}}\right)\\
=&S_1+S_2-S_3-S_4-S_5-S_6-2S_7+O\left(N^{\frac{12.2}{13.2}}\right).
\end{align*}
\end{lemma}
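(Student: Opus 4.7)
The plan is to adapt the argument of \cite[Lemma 5]{CAI867} to the set $\mathcal{A}_1$, the only substantive change being that the congruence conditions modulo $b^2$ and the extra factor $a$ are absorbed into the density $\omega$ and the normalization $|\mathcal{A}|$, so the sieve combinatorics is unchanged. The starting point is the trivial inequality
\[
R_{a,b}(N) \geqslant S(\mathcal{A};\mathcal{P},z) - \sum_{\substack{z \leqslant p_1 < p_2 < (N/(bp_1))^{1/2}\\ (p_1p_2,N)=1}} S(\mathcal{A}_{p_1p_2};\mathcal{P}(p_1),p_2),
\]
which simply says that any sifted element of $\mathcal{A}$ with more than two prime factors has a divisor $p_1 p_2$ with both primes $\geqslant z$. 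I would add three copies of this inequality with $z=(N/b)^{1/13.2}$ to one copy with $z=(N/b)^{1/8.4}$; the left sides sum to $4R_{a,b}(N)$, and the right sides produce the two positive main terms $3S_{11}+S_{12}$, together with three negative sums at level $(N/b)^{1/13.2}$ and one negative sum at level $(N/b)^{1/8.4}$.

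Next I would partition the $p_1$-range of each of these four negative sums using the cut-points $(N/b)^{1/8.4}$, $(N/b)^{3.6/13.2}$, $(N/b)^{4.1001/13.2}$, $(N/b)^{1/3}$, and $(N/b)^{1/3.604}$ that appear in the statement. For pairs with $p_1 \geqslant (N/b)^{1/3.604}$ the summand is kept unchanged, producing the $-2S_7$ contribution. For pairs with $p_1 < (N/b)^{1/3}$, one application of Buchstab's identity converts $S(\mathcal{A}_{p_1p_2};\mathcal{P}(p_1),p_2)$ into an inner single-prime sieve function minus a triple-prime sum; the single-prime pieces collect into $-S_3$, $-S_5$, and $-S_6$ after splitting the outer variable over its subranges. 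A second application of Buchstab to the triple-prime sum, carried out in the regime $(N/b)^{1/13.2}\leqslant p_1<p_2<p_3<(N/b)^{1/8.4}$ (respectively the regime defining $S_{22}$), converts it into the positive pair-sum $S_{21}+S_{22}$ minus the quadruple-prime sums $S_{41}+S_{42}$. Collecting all pieces yields the claimed inequality.

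The error $O(N^{12.2/13.2})$ arises from the standard coincidence terms, where in the course of a Buchstab step two primes collide. The typical such contribution is bounded by
\[
\sum_{\substack{(N/b)^{1/13.2}\leqslant p_1<p_2\\ (p_1p_2,N)=1}} S(\mathcal{A}_{p_1^2 p_2};\mathcal{P}, p_1) \ll \frac{N/b}{(N/b)^{1/13.2}} \ll N^{12.2/13.2},
\]
which uses only the trivial upper bound $|\mathcal{A}_{p_1^2 p_2}| \ll N/(b p_1^2 p_2)$ together with the assumption $b\leqslant N^{\varepsilon}$.

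The only real obstacle is the bookkeeping: one must verify that, after the partitioning and the two rounds of Buchstab, the ranges of $(p_1,\ldots,p_4)$ in every surviving summand match, range-for-range and with the correct multiplicity, one of the $S_{ij}$ listed in the statement. The chosen exponents $4.6/13.2$, $4.1001/13.2$, $3.6/13.2$, and $1/3.604$ are precisely those used by Cai in \cite{CAI867}; once one checks that the three copies at level $(N/b)^{1/13.2}$ and the single copy at level $(N/b)^{1/8.4}$ combine with the proper multiplicities across each subrange, the argument goes through in a mechanical fashion.
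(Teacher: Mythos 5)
The starting decomposition in your proposal is not the one that yields Lemma~\ref{l32}, and the mismatch already shows in the coefficient of $S_7$. You propose to add three copies of the trivial inequality with $z=(N/b)^{1/13.2}$ to one copy with $z=(N/b)^{1/8.4}$, giving
$4R_{a,b}(N) \geqslant 3S_{11}+S_{12}-3\Sigma_{1/13.2}-\Sigma_{1/8.4}$,
where $\Sigma_{z}=\sum_{z\leqslant p_1<p_2<(N/(bp_1))^{1/2}}S(\mathcal{A}_{p_1p_2};\mathcal{P}(p_1),p_2)$.
Since $(N/b)^{1/3.604}>(N/b)^{1/8.4}>(N/b)^{1/13.2}$, all four of these $\Sigma$-sums contain the range $p_1\geqslant(N/b)^{1/3.604}$ in full; keeping that range unchanged as you propose therefore produces $-4S_7$, not the $-2S_7$ that appears in the lemma. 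The paper instead applies Lemma~\ref{l31} twice, with $(\alpha,\beta)=(1/13.2,\ 1/3)$ and $(\alpha,\beta)=(1/8.4,\ 1/3.604)$; each application is built from two trivial inequalities at levels $\alpha$ and $\beta$, so the four trivial inequalities actually used sit at the levels $1/13.2$, $1/3$, $1/8.4$, $1/3.604$. The $-2\Sigma_\beta$ term coming from the $(1/13.2,1/3)$ application vanishes identically (for $p_1\geqslant(N/b)^{1/3}$ the constraint $p_2<(N/(bp_1))^{1/2}$ forces $p_2<p_1$), and only the $(1/8.4,1/3.604)$ application contributes $-2\Sigma_{1/3.604}=-2S_7$. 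These two sets of sift levels carry genuinely different information — the trivial inequality has different slack at different $z$ — and there is no chain of Buchstab equalities taking your $3S_{11}+S_{12}-3\Sigma_{1/13.2}-\Sigma_{1/8.4}$ to the right-hand side of Lemma~\ref{l32}.

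There is also a conceptual misstatement in the middle paragraph: Buchstab applied to $S(\mathcal{A}_{p_1p_2};\mathcal{P}(p_1),p_2)$ gives $S(\mathcal{A}_{p_1p_2};\mathcal{P}(p_1),z)-\sum_{z\leqslant p_3<p_2}S(\mathcal{A}_{p_1p_2p_3};\mathcal{P}(p_1),p_3)$, i.e.\ a \emph{double}-prime sieve at a lower level minus a triple-prime sum; it does not produce the single-variable sums $S(\mathcal{A}_p;\mathcal{P},z)$ that make up $S_3$ and $S_5$. In the paper those single-variable sums arise from the other direction: $S_\beta$ is Buchstab-expanded down to $S_\alpha$ inside the proof of Lemma~\ref{l31}, which requires the trivial inequality at the higher level $\beta$ as a starting point. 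The correct route is therefore to first establish Lemma~\ref{l31}, apply it with the two parameter pairs above, and then use the Buchstab manipulations corresponding to the paper's displays (21)--(27) to split and reassemble the single-, double-, triple- and quadruple-prime sums into the $S_{ij}$ of the statement.
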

\begin{proof} It is similar to that of [\cite{CAI867}, Lemma 6]. By Buchstab's identity, we have
\begin{align}
\nonumber S\left(\mathcal{A};\mathcal{P}, \left(\frac{N}{b}\right)^{\frac{1}{8.4}}\right)=&S\left(\mathcal{A};\mathcal{P}, \left(\frac{N}{b}\right)^{\frac{1}{13.2}}\right)\\
\nonumber &-\sum_{\substack{(\frac{N}{b})^{\frac{1}{13.2}} \leqslant p<(\frac{N}{b})^{\frac{1}{8.4}} \\ (p, N)=1} }S\left(\mathcal{A}_{p};\mathcal{P},\left(\frac{N}{b}\right)^{\frac{1}{13.2}}\right)\\
\nonumber &+\sum_{\substack{(\frac{N}{b})^{\frac{1}{13.2}} \leqslant p_1<p_2<(\frac{N}{b})^{\frac{1}{8.4}} \\ (p_1 p_2, N)=1}} S\left(\mathcal{A}_{p_1 p_2};\mathcal{P},\left(\frac{N}{b}\right)^{\frac{1}{13.2}}\right)\\
&-\sum_{\substack{(\frac{N}{b})^{\frac{1}{13.2}} \leqslant p_1<p_2<p_3<(\frac{N}{b})^{\frac{1}{8.4}} \\ (p_1 p_2 p_3, N)=1} }S\left(\mathcal{A}_{p_1 p_2 p_3};\mathcal{P},p_1\right),
\end{align}
\begin{align}
\nonumber &\sum_{\substack{(\frac{N}{b})^{\frac{1}{8.4}} \leqslant p<(\frac{N}{b})^{\frac{3.6}{13.2}} \\ (p, N)=1} }S\left(\mathcal{A}_{p};\mathcal{P},\left(\frac{N}{b}\right)^{\frac{1}{8.4}}\right)\\ \nonumber \leqslant &\sum_{\substack{(\frac{N}{b})^{\frac{1}{8.4}} \leqslant p<(\frac{N}{b})^{\frac{3.6}{13.2}} \\ (p, N)=1} }S\left(\mathcal{A}_{p};\mathcal{P},\left(\frac{N}{b}\right)^{\frac{1}{13.2}}\right)\\
\nonumber &-\sum_{\substack{(\frac{N}{b})^{\frac{1}{13.2}} \leqslant p_1<(\frac{N}{b})^{\frac{1}{8.4}} \leqslant p_2<(\frac{N}{b})^{\frac{4.6}{13.2}}p^{-1}_1 \\ (p_1 p_2, N)=1} }S\left(\mathcal{A}_{p_1 p_2};\mathcal{P},\left(\frac{N}{b}\right)^{\frac{1}{13.2}}\right)\\
&+\sum_{\substack{(\frac{N}{b})^{\frac{1}{13.2}} \leqslant p_1<p_2<(\frac{N}{b})^{\frac{1}{8.4}} \leqslant p_3<(\frac{N}{b})^{\frac{4.6}{13.2}}p^{-1}_2 \\ (p_1 p_2 p_3, N)=1} }S\left(\mathcal{A}_{p_1 p_2 p_3};\mathcal{P},p_1\right),
\end{align}
\begin{align}
\nonumber &\sum_{\substack{(\frac{N}{b})^{\frac{1}{8.4}} \leqslant p_1<(\frac{N}{b})^{\frac{1}{3.604}} \leqslant p_2 <(\frac{N}{bp_1})^{\frac{1}{2}} \\ (p_1 p_2, N)=1} }S\left(\mathcal{A}_{p_1 p_2};\mathcal{P}(p_1),p_2\right)\\
\nonumber =&\sum_{\substack{(\frac{N}{b})^{\frac{1}{8.4}} \leqslant p_1<(\frac{N}{b})^{\frac{1}{3.604}} \leqslant p_2 <(\frac{N}{bp_1})^{\frac{1}{3}} \\ (p_1 p_2, N)=1} }S\left(\mathcal{A}_{p_1 p_2};\mathcal{P}(p_1),p_2\right)\\
&+\sum_{\substack{(\frac{N}{b})^{\frac{1}{8.4}} \leqslant p_1<(\frac{N}{b})^{\frac{1}{3.604}},\ (\frac{N}{bp_1})^{\frac{1}{3}}\leqslant p_2 <(\frac{N}{bp_1})^{\frac{1}{2}} \\ (p_1 p_2, N)=1} }S\left(\mathcal{A}_{p_1 p_2};\mathcal{P}(p_1),p_2\right).
\end{align}
If $p_{2} \leqslant \left(\frac{N}{bp_{1}}\right)^{\frac{1}{3}}$, then $p_{2} \leqslant \left(\frac{N}{bp_{1} p_{2}}\right)^{\frac{1}{2}}$ and by Buchstab's identity we have
\begin{align}
\nonumber &\sum_{\substack{(\frac{N}{b})^{\frac{1}{8.4}} \leqslant p_1<(\frac{N}{b})^{\frac{1}{3.604}} \leqslant p_2 <(\frac{N}{bp_1})^{\frac{1}{3}} \\ (p_1 p_2, N)=1}} S\left(\mathcal{A}_{p_1 p_2};\mathcal{P}(p_1),p_2\right)\\
\nonumber=&\sum_{\substack{(\frac{N}{b})^{\frac{1}{8.4}} \leqslant p_1<(\frac{N}{b})^{\frac{1}{3.604}} \leqslant p_2 <(\frac{N}{bp_1})^{\frac{1}{3}} \\ (p_1 p_2, N)=1} }S\left(\mathcal{A}_{p_1 p_2};\mathcal{P}(p_1),\left(\frac{N}{bp_1 p_2}\right)^{\frac{1}{2}}\right)\\
&+\sum_{\substack{(\frac{N}{b})^{\frac{1}{8.4}} \leqslant p_1<(\frac{N}{b})^{\frac{1}{3.604}} \leqslant p_2\leqslant p_3 <(\frac{N}{bp_1 p_2})^{\frac{1}{2}} \\ (p_1 p_2 p_3, N)=1} }S\left(\mathcal{A}_{p_1 p_2 p_3};\mathcal{P}(p_1 p_2),p_3\right).
\end{align}
On the other hand, if $p_{2} \geqslant \left(\frac{N}{bp_{1}}\right)^{\frac{1}{3}}$, then $p_{2} \geqslant \left(\frac{N}{bp_{1} p_{2}}\right)^{\frac{1}{2}}$ and we have
\begin{align}
\nonumber&\sum_{\substack{(\frac{N}{b})^{\frac{1}{8.4}} \leqslant p_1<(\frac{N}{b})^{\frac{1}{3.604}},\ (\frac{N}{bp_1})^{\frac{1}{3}}\leqslant p_2 <(\frac{N}{bp_1})^{\frac{1}{2}} \\ (p_1 p_2, N)=1} }S\left(\mathcal{A}_{p_1 p_2};\mathcal{P}(p_1),p_2\right)\\
\leqslant &\sum_{\substack{(\frac{N}{b})^{\frac{1}{8.4}} \leqslant p_1<(\frac{N}{b})^{\frac{1}{3.604}},\ (\frac{N}{bp_1})^{\frac{1}{3}}\leqslant p_2 <(\frac{N}{bp_1})^{\frac{1}{2}} \\ (p_1 p_2, N)=1} }S\left(\mathcal{A}_{p_1 p_2};\mathcal{P}(p_1),\left(\frac{N}{bp_1 p_2}\right)^{\frac{1}{2}}\right).
\end{align}
By (23)--(25) we get
\begin{align}
\nonumber&\sum_{\substack{(\frac{N}{b})^{\frac{1}{8.4}} \leqslant p_1<(\frac{N}{b})^{\frac{1}{3.604}} \leqslant p_2 <(\frac{N}{bp_1})^{\frac{1}{2}} \\ (p_1 p_2, N)=1} }S\left(\mathcal{A}_{p_1 p_2};\mathcal{P}(p_1),p_2\right)\\
\nonumber\leqslant &\sum_{\substack{(\frac{N}{b})^{\frac{1}{8.4}} \leqslant p_1<(\frac{N}{b})^{\frac{1}{3.604}} \leqslant p_2 <(\frac{N}{bp_1})^{\frac{1}{2}} \\ (p_1 p_2, N)=1} }S\left(\mathcal{A}_{p_1 p_2};\mathcal{P}(p_1),\left(\frac{N}{bp_1 p_2}\right)^{\frac{1}{2}}\right)\\
&+\sum_{\substack{(\frac{N}{b})^{\frac{1}{8.4}} \leqslant p_1<(\frac{N}{b})^{\frac{1}{3.604}} \leqslant p_2\leqslant p_3 <(\frac{N}{bp_1 p_2})^{\frac{1}{2}} \\ (p_1 p_2 p_3, N)=1}} S\left(\mathcal{A}_{p_1 p_2 p_3};\mathcal{P}(p_1 p_2),p_3\right).
\end{align}
By Buchstab's identity we have
\begin{align}
\nonumber&\sum_{\substack{(\frac{N}{b})^{\frac{1}{13.2}} \leqslant p_1<p_2<p_3<(\frac{N}{b})^{\frac{1}{3}} \\ (p_1 p_2 p_3, N)=1} }S\left(\mathcal{A}_{p_1 p_2 p_3};\mathcal{P}(p_1),p_2\right)\\
\nonumber&-\sum_{\substack{(\frac{N}{b})^{\frac{1}{13.2}} \leqslant p_1<p_2<p_3<(\frac{N}{b})^{\frac{1}{8.4}} \\ (p_1 p_2 p_3, N)=1} }S\left(\mathcal{A}_{p_1 p_2 p_3};\mathcal{P},p_1\right)\\
\nonumber&-\sum_{\substack{(\frac{N}{b})^{\frac{1}{13.2}} \leqslant p_1<p_2<(\frac{N}{b})^{\frac{1}{8.4}} \leqslant p_3<(\frac{N}{b})^{\frac{4.6}{13.2}}p^{-1}_2 \\ (p_1 p_2 p_3, N)=1} }S\left(\mathcal{A}_{p_1 p_2 p_3};\mathcal{P},p_1\right)\\
\nonumber&-\sum_{\substack{(\frac{N}{b})^{\frac{1}{8.4}} \leqslant p_1<(\frac{N}{b})^{\frac{1}{3.604}} \leqslant p_2\leqslant p_3 <(\frac{N}{bp_1 p_2})^{\frac{1}{2}} \\ (p_1 p_2 p_3, N)=1} }S\left(\mathcal{A}_{p_1 p_2 p_3};\mathcal{P}(p_1 p_2),p_3\right)\\
\nonumber\geqslant &-\sum_{\substack{(\frac{N}{b})^{\frac{1}{13.2}} \leqslant p_1 < p_2 < p_3< p_4<(\frac{N}{b})^{\frac{1}{8.4}} \\ (p_1 p_2 p_3 p_4, N)=1}} S\left(\mathcal{A}_{p_1 p_2 p_3 p_4};\mathcal{P}(p_1),p_2\right) \\
&-\sum_{\substack{(\frac{N}{b})^{\frac{1}{13.2}} \leqslant p_1 < p_2 < p_3<(\frac{N}{b})^{\frac{1}{8.4}} \leqslant p_4< (\frac{N}{b})^{\frac{4.6}{13.2}}p^{-1}_3 \\ (p_1 p_2 p_3 p_4, N)=1} }S\left(\mathcal{A}_{p_1 p_2 p_3 p_4};\mathcal{P}(p_1),p_2\right) 
+O\left(N^{\frac{12.2}{13.2}}\right),
\end{align}
where an argument similar to (20) is used. By Lemma~\ref{l31} with $(\alpha, \beta)=\left(\frac{1}{13.2}, \frac{1}{3}\right)$ and $(\alpha, \beta)=$ $\left(\frac{1}{8.4}, \frac{1}{3.604}\right)$ and (21)--(22), (26)--(27)  we complete the proof of Lemma~\ref{l32}.
\end{proof}
\begin{lemma}\label{l33} Let $\mathcal{A}=\mathcal{A}_2$ in section 2, then we have
\begin{align*}
4R_{a,b}^{\theta}(N) \geqslant &3S\left(\mathcal{A};\mathcal{P}, \left(\frac{N}{b}\right)^{\frac{1}{14}}\right)+S\left(\mathcal{A};\mathcal{P}, \left(\frac{N}{b}\right)^{\frac{1}{8.8}}\right)\\
&+\sum_{\substack{(\frac{N}{b})^{\frac{1}{14}} \leqslant p_1<p_2<(\frac{N}{b})^{\frac{1}{8.8}} \\ (p_1 p_2, N)=1} }S\left(\mathcal{A}_{p_1 p_2};\mathcal{P},\left(\frac{N}{b}\right)^{\frac{1}{14}}\right)\\
&+\sum_{\substack{(\frac{N}{b})^{\frac{1}{14}} \leqslant p_1<(\frac{N}{b})^{\frac{1}{8.8}} \leqslant p_2<(\frac{N}{b})^{\frac{4.5863}{14}}p^{-1}_1 \\ (p_1 p_2, N)=1} }S\left(\mathcal{A}_{p_1 p_2};\mathcal{P},\left(\frac{N}{b}\right)^{\frac{1}{14}}\right)\\
&-\sum_{\substack{(\frac{N}{b})^{\frac{1}{14}} \leqslant p<(\frac{N}{b})^{\frac{4.08631}{14}} \\ (p, N)=1} }S\left(\mathcal{A}_{p};\mathcal{P},\left(\frac{N}{b}\right)^{\frac{1}{14}}\right)\\
&-\sum_{\substack{(\frac{N}{b})^{\frac{1}{14}} \leqslant p<(\frac{N}{b})^{\frac{3.5863}{14}} \\ (p, N)=1}} S\left(\mathcal{A}_{p};\mathcal{P},\left(\frac{N}{b}\right)^{\frac{1}{14}}\right)\\
&-\sum_{\substack{(\frac{N}{b})^{\frac{1}{14}} \leqslant p_1<(\frac{N}{b})^{\frac{1}{3.1}} \leqslant p_2 <(\frac{N}{bp_1})^{\frac{1}{2}} \\ (p_1 p_2, N)=1} }S\left(\mathcal{A}_{p_1 p_2};\mathcal{P}(p_1),p_2\right)\\
&-\sum_{\substack{(\frac{N}{b})^{\frac{1}{8.8}} \leqslant p_1<(\frac{N}{b})^{\frac{1}{3.7}} \leqslant p_2 <(\frac{N}{bp_1})^{\frac{1}{2}} \\ (p_1 p_2, N)=1} }S\left(\mathcal{A}_{p_1 p_2};\mathcal{P}(p_1),\left(\frac{N}{bp_1 p_2}\right)^{\frac{1}{2}}\right)\\
&-\sum_{\substack{(\frac{N}{b})^{\frac{4.08631}{14}} \leqslant p<(\frac{N}{b})^{\frac{1}{3.1}} \\ (p, N)=1}} S\left(\mathcal{A}_{p};\mathcal{P},\left(\frac{N}{b}\right)^{\frac{1}{14}}\right)\\
&-\sum_{\substack{(\frac{N}{b})^{\frac{3.5863}{14}} \leqslant p<(\frac{N}{b})^{\frac{1}{3.7}} \\ (p, N)=1} }S\left(\mathcal{A}_{p};\mathcal{P},\left(\frac{N}{b}\right)^{\frac{1}{8.8}}\right)\\
&-\sum_{\substack{(\frac{N}{b})^{\frac{1}{14}} \leqslant p_1 < p_2 < p_3< p_4<(\frac{N}{b})^{\frac{1}{8.8}} \\ (p_1 p_2 p_3 p_4, N)=1} }S\left(\mathcal{A}_{p_1 p_2 p_3 p_4};\mathcal{P}(p_1),p_2\right) \\
&-\sum_{\substack{(\frac{N}{b})^{\frac{1}{14}} \leqslant p_1 < p_2 < p_3<(\frac{N}{b})^{\frac{1}{8.8}} \leqslant p_4< (\frac{N}{b})^{\frac{4.5863}{14}}p_3^{-1} \\ (p_1 p_2 p_3 p_4, N)=1} }S\left(\mathcal{A}_{p_1 p_2 p_3 p_4};\mathcal{P}(p_1),p_2\right) \\
&-2\sum_{\substack{(\frac{N}{b})^{\frac{1}{3.1}} \leqslant p_1 < p_2 <(\frac{N}{bp_1})^{\frac{1}{2}} \\ (p_1 p_2, N)=1} }S\left(\mathcal{A}_{p_1 p_2};\mathcal{P}(p_1),p_2\right)\\
&-2\sum_{\substack{(\frac{N}{b})^{\frac{1}{3.7}} \leqslant p_1 < p_2 <(\frac{N}{bp_1})^{\frac{1}{2}} \\ (p_1 p_2, N)=1} }S\left(\mathcal{A}_{p_1 p_2};\mathcal{P}(p_1),p_2\right)+O\left(N^{\frac{13}{14}}\right)\\
=&\left(3 S_{11}^{\prime}+S_{12}^{\prime}\right)+\left(S_{21}^{\prime}+S_{22}^{\prime}\right)-\left(S_{31}^{\prime}+S_{32}^{\prime}\right)-\left(S_{41}^{\prime}+S_{42}^{\prime}\right)\\
&-\left(S_{51}^{\prime}+S_{52}^{\prime}\right)-\left(S_{61}^{\prime}+S_{62}^{\prime}\right)-2\left(S_{71}^{\prime}+S_{72}^{\prime}\right)+O\left(N^{\frac{13}{14}}\right)\\
=&S_1^{\prime}+S_2^{\prime}-S_3^{\prime}-S_4^{\prime}-S_5^{\prime}-S_6^{\prime}-2S_7^{\prime}+O\left(N^{\frac{13}{14}}\right).
\end{align*}
\end{lemma}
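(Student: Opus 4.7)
The plan is to mirror the proof of Lemma~\ref{l32} line-by-line, substituting the exponents appropriate to the short/small-prime setting: $1/14$ in place of $1/13.2$, $1/8.8$ in place of $1/8.4$, $1/3.1$ in place of $1/3$, $1/3.7$ in place of $1/3.604$, and the corresponding derived constants $4.5863/14$, $4.08631/14$, $3.5863/14$. The only structural change is that we invoke Lemma~\ref{l31short} rather than Lemma~\ref{l31}, since $\mathcal{A}=\mathcal{A}_2$ is restricted to $p\leqslant N^{\theta}/a$.

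First, I would apply Buchstab's identity in the style of (21) of the proof of Lemma~\ref{l32} to expand $S(\mathcal{A};\mathcal{P},(N/b)^{1/8.8})$ into $S(\mathcal{A};\mathcal{P},(N/b)^{1/14})$ together with one-, two-, and three-prime sums whose indices lie in $[(N/b)^{1/14},(N/b)^{1/8.8})$. Second, paralleling (22), I would bound the single-prime sum $\sum_{p} S(\mathcal{A}_p;\mathcal{P},(N/b)^{1/8.8})$ over $p\in[(N/b)^{1/8.8},(N/b)^{3.5863/14})$ by a two-prime sum with $p_1\in[(N/b)^{1/14},(N/b)^{1/8.8})$, $p_2\in[(N/b)^{1/8.8},(N/b)^{4.5863/14}p_1^{-1})$, plus a three-prime remainder.

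The crucial splitting step, paralleling (23)--(26), handles the range $p_1\in[(N/b)^{1/8.8},(N/b)^{1/3.7})$, $p_2\in[(N/b)^{1/3.7},(N/bp_1)^{1/2})$. I would split according to whether $p_2<(N/bp_1)^{1/3}$, in which case $p_2\leqslant(N/bp_1p_2)^{1/2}$ and one more Buchstab produces an extra three-prime term, or $p_2\geqslant(N/bp_1)^{1/3}$, in which case the sieving threshold $(N/bp_1p_2)^{1/2}\leqslant p_2$ so no new term appears. The three-prime terms generated by all these expansions are then combined, and one further Buchstab converts them into four-prime sums exactly as in (27); the diagonal contribution $p_i=p_j$ is absorbed into $O(N^{13/14})$ via the trivial estimate used in (20).

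The final step applies Lemma~\ref{l31short} with the two parameter pairs $(\alpha,\beta)=(1/14,\,1/3.1)$ and $(\alpha,\beta)=(1/8.8,\,1/3.7)$, and adds the two resulting inequalities to the bookkeeping above to obtain the claim. The main obstacle is purely arithmetic: one must verify that after the three Buchstab expansions and the splitting trick, the index ranges of the surviving one-, two-, three-, and four-prime sums collapse to exactly those in the stated inequality, and that the error $O(N^{13/14})$ indeed dominates all diagonal and switching-principle contributions. Since the argument is structurally identical to that of Lemma~\ref{l32}, no new analytic input is needed beyond substituting the new exponents.
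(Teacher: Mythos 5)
Your proposal is correct and matches the paper's own (omitted) route: two applications of Lemma~\ref{l31short} at $(\alpha,\beta)=(1/14,1/3.1)$ and $(1/8.8,1/3.7)$, together with the same sequence of Buchstab expansions and the $p_2$-range split at $(N/bp_1)^{1/3}$ used in the proof of Lemma~\ref{l32}, now with the substituted exponents. One small point your phrase ``the only structural change is Lemma~\ref{l31short}'' understates: since $\beta=1/3.1>1/3$, the set $\bigl\{(N/b)^{\beta}\leqslant p_1<p_2<(N/bp_1)^{1/2}\bigr\}$ is no longer empty (as it was for $\beta=1/3$ in Lemma~\ref{l32}), so the term $-2S_{71}'$ genuinely survives and gives the extra $-2$ contribution in the conclusion; your step of literally adding the two Lemma~\ref{l31short} inequalities already produces it, so the plan remains sound.
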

\begin{proof} It is similar to that of Lemma~\ref{l32} and [\cite{Cai2015}, Lemma 9] so we omit it here.
\end{proof}
\begin{lemma}\label{l34} See \cite{CAI867}. Let $\mathcal{A}=\mathcal{A}_1$ in section 2, $D_{1}=\left(\frac{N}{b}\right)^{1/2}\left(\log\left(\frac{N}{b}\right)\right)^{-B}$ with $B=B(A)>0$ in Lemma~\ref{l3}, and $\underline{p}=\frac{D_{1}}{p}$. Then we have
\begin{align*}
& \sum_{\substack{(\frac{N}{b})^{\frac{4.1001}{13.2}} \leqslant p<(\frac{N}{b})^{\frac{1}{3}} \\
(p, N)=1}} S\left(\mathcal{A}_{p};\mathcal{P}, \underline{p}^{\frac{1}{2.5}}\right) \\
\leqslant& \sum_{\substack{(\frac{N}{b})^{\frac{4.1001}{13.2}} \leqslant p<(\frac{N}{b})^{\frac{1}{3}} \\
(p, N)=1}} S\left(\mathcal{A}_{p};\mathcal{P}, \underline{p}^{\frac{1}{3.675}}\right)
\\
&-\frac{1}{2}\sum_{\substack{(\frac{N}{b})^{\frac{4.1001}{13.2}} \leqslant p<(\frac{N}{b})^{\frac{1}{3}} \\
(p, N)=1}}\sum_{\substack{\underline{p}^{\frac{1}{3.675}} \leqslant p_1<\underline{p}^{\frac{1}{2.5}} \\ (p_1, N)=1}}S\left(\mathcal{A}_{p p_1};\mathcal{P}, \underline{p}^{\frac{1}{3.675}}\right) 
\\
&+\frac{1}{2}
\sum
_{
\substack{(\frac{N}{b})^{\frac{4.1001}{13.2}} \leqslant p<(\frac{N}{b})^{\frac{1}{3}} \\
(p, N)=1}
}
\sum
_{
\substack{\underline{p}^{\frac{1}{3.675}} \leqslant p_1<p_2<p_3<\underline{p}^{\frac{1}{2.5}} \\ (p_1 p_2 p_3, N)=1}
}
S\left(\mathcal{A}_{p p_1 p_2 p_3};\mathcal{P}(p_1), p_2\right)
+O\left(N^{\frac{19}{20}}\right).
\end{align*}
\end{lemma}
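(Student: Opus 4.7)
The plan is to apply Buchstab's identity twice and then establish a pointwise combinatorial inequality that yields the factor $\tfrac{1}{2}$, mimicking the strategy of Lemma~7 of \cite{CAI867}.

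The first Buchstab identity, with intermediate cutoff $\underline{p}^{1/3.675}$, gives
\[
S\left(\mathcal{A}_p;\mathcal{P},\underline{p}^{1/2.5}\right)=S\left(\mathcal{A}_p;\mathcal{P},\underline{p}^{1/3.675}\right)-\sum_{\substack{\underline{p}^{1/3.675}\leqslant p_1<\underline{p}^{1/2.5}\\(p_1,N)=1}}S\left(\mathcal{A}_{pp_1};\mathcal{P},p_1\right).
\]
After summing over the permitted range of $p$, the leading term matches the first term on the right-hand side of the statement, so the problem reduces to a suitable lower bound for the double sum $T:=\sum_p\sum_{p_1}S(\mathcal{A}_{pp_1};\mathcal{P},p_1)$. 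A second Buchstab identity then converts each $S(\mathcal{A}_{pp_1};\mathcal{P},p_1)$ into $S(\mathcal{A}_{pp_1};\mathcal{P},\underline{p}^{1/3.675})$ minus a triple sum; renaming $p_1\leftrightarrow p_2$ in that triple sum identifies it with $T_2:=\sum_p\sum_{p_1<p_2}S(\mathcal{A}_{pp_1p_2};\mathcal{P},p_1)$, in which the sifting parameter is always the \emph{minimum} of the two indices. Writing $A$ for the double sum $\sum_p\sum_{p_1}S(\mathcal{A}_{pp_1};\mathcal{P},\underline{p}^{1/3.675})$ and $B$ for the four-prime sum $\sum_p\sum_{p_1<p_2<p_3}S(\mathcal{A}_{pp_1p_2p_3};\mathcal{P}(p_1),p_2)$, the identity $T=A-T_2$ reduces the lemma to the single inequality $2T_2\leqslant A+B$.

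This inequality can be verified pointwise. For each $m\in\mathcal{A}_p$ with $m/p$ coprime to $P(\underline{p}^{1/3.675})$, let $k(m)$ denote the number of distinct prime factors of $m/p$ lying in $[\underline{p}^{1/3.675},\underline{p}^{1/2.5})$. A direct analysis of the sifting conditions -- forcing $p_1$ to equal the smallest such prime in $T_2$, and forcing $p_1,p_2$ to equal the two smallest in $B$ (the enlargement of the sifting set from $\mathcal{P}$ to $\mathcal{P}(p_1)$ in $B$ is precisely what allows the smallest prime to survive as a permissible divisor of the residual) -- shows that each such $m$ contributes $k(m)$, $\max(k(m)-1,0)$ and $\max(k(m)-2,0)$ to $A$, $T_2$ and $B$ respectively. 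Since $k+\max(k-2,0)\geqslant 2\max(k-1,0)$ for every non-negative integer $k$, the inequality $A+B\geqslant 2T_2$ follows, with equality whenever $k\geqslant 2$. Non-squarefree contributions -- from moduli of the form $pp_1^2$ and similar boundary cases arising when the Buchstab identities are applied -- are bounded trivially using $p\geqslant(N/b)^{4.1001/13.2}$ and $p_1\geqslant\underline{p}^{1/3.675}$, giving a total error of $O(N^{19/20})$.

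The main obstacle is the combinatorial verification of the pointwise contributions above: in particular, one must check carefully that the sifting constraint in $B$ forces $p_1$ and $p_2$ to be the two smallest prime factors of $m/p$ in $[\underline{p}^{1/3.675},\underline{p}^{1/2.5})$, so that $p_3$ ranges over precisely the remaining $k(m)-2$ primes, and that the enlargement $\mathcal{P}\to\mathcal{P}(p_1)$ is exactly what is needed -- neither more nor less -- to secure this counting. The specific exponents $1/2.5$ and $1/3.675$ appearing in the statement are dictated by the subsequent numerical optimisation in Lemma~\ref{l32}.
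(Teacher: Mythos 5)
Your proposal is correct and reaches the lemma by what is, at heart, the same mechanism as the paper's proof, but with a cleaner packaging of the middle step. The paper adds two Buchstab decompositions of $S(\mathcal{A}_p;\mathcal{P},\underline{p}^{1/2.5})$ (its displays (28) and (29)), substitutes a further Buchstab identity (30), and then drops the non-negative quantity $\sum_{p_1}S(\mathcal{A}_{pp_1};\mathcal{P},\underline{p}^{1/2.5})$; the weight $\tfrac12$ comes from averaging the two decompositions. You instead perform a single iterated Buchstab step (equivalent to (28)), writing $T=A-T_2$, and then verify $2T_2\leqslant A+B$ directly by the pointwise count $k+\max(k-2,0)\geqslant 2\max(k-1,0)$. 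These are equivalent: one can check that the discarded non-negative term in the paper's route is exactly the count of squarefree elements with $k(m)=1$, and the equality in your combinatorial inequality for $k\geqslant 2$ corresponds to the paper's exact cancellation there. Your reduction and the $k$-based verification are sound, and the non-squarefree corrections are, as you say, swept into the $O(N^{19/20})$ error.

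One small imprecision worth fixing: for a squarefree residual $a'=m/(pp_1p_2p_3)$ one has $p_1\nmid a'$ automatically, so replacing $\mathcal{P}$ by $\mathcal{P}(p_1)$ in the four-prime sum $B$ does not change the pointwise squarefree count at all; the distinction matters only for the $p_1^2\mid m/p$ contributions, which are already in the error term. Thus the sifting set $\mathcal{P}(p_1)$ in $B$ is an artifact of the Buchstab step that produces it (the paper's (30)), not the mechanism that ``allows the smallest prime to survive'' -- your parenthetical explanation overstates its role, though this does not affect the validity of the argument.
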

\begin{proof} It is similar to that of [\cite{CAI867}, Lemma 7]. By Buchstab's identity, we have
\begin{align}
\nonumber S\left(\mathcal{A}_{p};\mathcal{P}, \underline{p}^{\frac{1}{2.5}}\right)= & S\left(\mathcal{A}_{p};\mathcal{P}, \underline{p}^{\frac{1}{3.675}}\right)-\sum_{\substack{\underline{p}^{\frac{1}{3.675}} \leqslant p_1<\underline{p}^{\frac{1}{2.5}} \\ (p_1, N)=1}}S\left(\mathcal{A}_{p p_1};\mathcal{P}, \underline{p}^{\frac{1}{3.675}}\right)\\
&+\sum_{\substack{\underline{p}^{\frac{1}{3.675}} \leqslant p_1<p_2<\underline{p}^{\frac{1}{2.5}} \\ (p_1 p_2, N)=1}}S\left(\mathcal{A}_{p p_1 p_2};\mathcal{P}, p_1\right),\\
\nonumber S\left(\mathcal{A}_{p};\mathcal{P}, \underline{p}^{\frac{1}{2.5}}\right)=&S\left(\mathcal{A}_{p};\mathcal{P}, \underline{p}^{\frac{1}{3.675}}\right)-\sum_{\substack{\underline{p}^{\frac{1}{3.675}} \leqslant p_1<\underline{p}^{\frac{1}{2.5}} \\ (p_1, N)=1}}S\left(\mathcal{A}_{p p_1};\mathcal{P}, \underline{p}^{\frac{1}{2.5}}\right)\\
&-\sum_{\substack{\underline{p}^{\frac{1}{3.675}} \leqslant p_1<p_2<\underline{p}^{\frac{1}{2.5}} \\ (p_1 p_2, N)=1}}S\left(\mathcal{A}_{p p_1 p_2};\mathcal{P}(p_1), p_2\right),
\end{align}
\begin{align}
\nonumber &\sum_{\substack{\underline{p}^{\frac{1}{3.675}} \leqslant p_1<p_2<\underline{p}^{\frac{1}{2.5}} \\ (p_1 p_2, N)=1}}S\left(\mathcal{A}_{p p_1 p_2};\mathcal{P}, p_1\right)-\sum_{\substack{\underline{p}^{\frac{1}{3.675}} \leqslant p_1<p_2<\underline{p}^{\frac{1}{2.5}} \\ (p_1 p_2, N)=1}}S\left(\mathcal{A}_{p p_1 p_2};\mathcal{P}(p_1), p_2\right)\\
=&\sum_{\substack{\underline{p}^{\frac{1}{3.675}} \leqslant p_1<p_2<p_3<\underline{p}^{\frac{1}{2.5}} \\ (p_1 p_2 p_3, N)=1}}S\left(\mathcal{A}_{p p_1 p_2 p_3};\mathcal{P}(p_1), p_2\right)+\sum_{\substack{\underline{p}^{\frac{1}{3.675}} \leqslant p_1<p_2<\underline{p}^{\frac{1}{2.5}} \\ (p_1 p_2, N)=1}}S\left(\mathcal{A}_{p p^2_1 p_2};\mathcal{P}, p_1\right).
\end{align}
Now we add (28) and (29), sum over $p$ in the interval $\left[\left(\frac{N}{b}\right)^{\frac{4.1001}{13.2}}, \left(\frac{N}{b}\right)^{\frac{1}{3}}\right)$ and by (30), we get Lemma~\ref{l34}, where the trivial inequality
$$
\sum_{\substack{(\frac{N}{b})^{\frac{4.1001}{13.2}} \leqslant p<(\frac{N}{b})^{\frac{1}{3}} \\
(p, N)=1}}\sum_{\substack{\underline{p}^{\frac{1}{3.675}} \leqslant p_1<p_2<\underline{p}^{\frac{1}{2.5}} \\ (p_1 p_2, N)=1}}S\left(\mathcal{A}_{p p^2_1 p_2};\mathcal{P}, p_1\right) \ll N^{\frac{19}{20}}
$$
is used.
\end{proof}
\begin{lemma}\label{l35} See \cite{CL2011}. Let $\mathcal{A}=\mathcal{A}_2$ in section 2, $D_{2}=\left(\frac{N}{b}\right)^{\theta/2}\left(\log\left(\frac{N}{b}\right)\right)^{-B}$ with $B=B(A)>0$ in Lemma~\ref{l3}, and $\underline{p}^{\prime}=\frac{D_{2}}{p}$. Then we have
\begin{align*}
& \sum_{\substack{(\frac{N}{b})^{\frac{4.08631}{14}} \leqslant p<(\frac{N}{b})^{\frac{1}{3.1}} \\
(p, N)=1}} S\left(\mathcal{A}_{p};\mathcal{P}, \underline{p}^{\prime\frac{1}{2.5}}\right) \\
\leqslant& \sum_{\substack{(\frac{N}{b})^{\frac{4.08631}{14}} \leqslant p<(\frac{N}{b})^{\frac{1}{3.1}} \\
(p, N)=1}} S\left(\mathcal{A}_{p};\mathcal{P}, \underline{p}^{\prime\frac{1}{3.675}}\right)
\\
&-\frac{1}{2}\sum_{\substack{(\frac{N}{b})^{\frac{4.08631}{14}} \leqslant p<(\frac{N}{b})^{\frac{1}{3.1}} \\
(p, N)=1}}\sum_{\substack{\underline{p}^{\prime\frac{1}{3.675}} \leqslant p_1<\underline{p}^{\prime\frac{1}{2.5}} \\ (p_1, N)=1}}S\left(\mathcal{A}_{p p_1};\mathcal{P}, \underline{p}^{\prime\frac{1}{3.675}}\right) 
\\
&+\frac{1}{2}
\sum
_{
\substack{(\frac{N}{b})^{\frac{4.08631}{14}} \leqslant p<(\frac{N}{b})^{\frac{1}{3.1}} \\
(p, N)=1}
}
\sum
_{
\substack{\underline{p}^{\prime\frac{1}{3.675}} \leqslant p_1<p_2<p_3<\underline{p}^{\prime\frac{1}{2.5}} \\ (p_1 p_2 p_3, N)=1}
}
S\left(\mathcal{A}_{p p_1 p_2 p_3};\mathcal{P}(p_1), p_2\right)
+O\left(N^{\theta-\frac{1}{20}}\right).
\end{align*}
\end{lemma}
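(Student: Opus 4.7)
The plan is to mimic the proof of Lemma~\ref{l34} essentially verbatim, only changing the range of $p$ from $[(N/b)^{4.1001/13.2},(N/b)^{1/3})$ to $[(N/b)^{4.08631/14},(N/b)^{1/3.1})$, replacing $\mathcal{A}_1$ by $\mathcal{A}_2$, and replacing $\underline{p}$ by $\underline{p}'$. I would apply Buchstab's identity twice to $S(\mathcal{A}_p;\mathcal{P},\underline{p}'^{1/2.5})$ in two complementary ways. In the first expansion, after pulling out the sieve at level $\underline{p}'^{1/3.675}$ and then re-applying Buchstab to the inner factors $S(\mathcal{A}_{pp_1};\mathcal{P},p_1)$, I obtain
\begin{align*}
S(\mathcal{A}_p;\mathcal{P},\underline{p}'^{1/2.5})=\ &S(\mathcal{A}_p;\mathcal{P},\underline{p}'^{1/3.675})-\!\!\!\sum_{\underline{p}'^{1/3.675}\leqslant p_1<\underline{p}'^{1/2.5}}\!\!\!S(\mathcal{A}_{pp_1};\mathcal{P},\underline{p}'^{1/3.675})\\
&+\!\!\!\sum_{\underline{p}'^{1/3.675}\leqslant p_1<p_2<\underline{p}'^{1/2.5}}\!\!\!S(\mathcal{A}_{pp_1p_2};\mathcal{P},p_1),
\end{align*}
which is the analogue of (28). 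In the second expansion, rewriting $S(\mathcal{A}_{pp_1};\mathcal{P},p_1)=S(\mathcal{A}_{pp_1};\mathcal{P}(p_1),p_1)$ and then lifting the sieve upward from $p_1$ to $\underline{p}'^{1/2.5}$ produces the analogue of (29), with terms $S(\mathcal{A}_{pp_1};\mathcal{P},\underline{p}'^{1/2.5})$ and $S(\mathcal{A}_{pp_1p_2};\mathcal{P}(p_1),p_2)$.

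Next I would add the two identities and divide by $2$, so that the two ``$p_1$-level'' pieces collapse to the averaged form shown in the claim. The combined $p_2$-sum then reads
$$\tfrac12\sum_{p_1<p_2}\bigl[S(\mathcal{A}_{pp_1p_2};\mathcal{P},p_1)-S(\mathcal{A}_{pp_1p_2};\mathcal{P}(p_1),p_2)\bigr],$$
and Buchstab's identity applied once more (expanding the $\mathcal{P}$-sieve from $p_1$ up to $\underline{p}'^{1/2.5}$ and removing the prime $p_1$) converts this into the claimed triple sum $\tfrac12\sum_{p_1<p_2<p_3}S(\mathcal{A}_{pp_1p_2p_3};\mathcal{P}(p_1),p_2)$ plus one exceptional piece, namely the $q=p_1$ contribution $\tfrac12\sum_{p_1<p_2}S(\mathcal{A}_{pp_1^2p_2};\mathcal{P},p_1)$, which arises because the Buchstab summation variable can coincide with $p_1$. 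Finally I sum over $p\in[(N/b)^{4.08631/14},(N/b)^{1/3.1})$.

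The only analytic point is the estimation of the exceptional $p_1^2$ term, and this is where the new error exponent must be verified. I would bound it trivially by $|\mathcal{A}_{2,pp_1^2p_2}|\ll N^\theta/(pp_1^2p_2)$ and sum over the ranges $p\asymp N^{1/3}$ and $p_1,p_2\in[\underline{p}'^{1/3.675},\underline{p}'^{1/2.5})$. Using $\underline{p}'=(N/b)^{\theta/2}(\log(N/b))^{-B}/p$, one has $p_1\geqslant\underline{p}'^{1/3.675}\gg N^{\theta/(2\cdot 3.675)-1/(3\cdot 3.675)}\gg N^{1/20}$ throughout the relevant range, so $\sum_{p_1}p_1^{-2}\ll N^{-1/20}$, which forces the whole double sum to be $O(N^{\theta-1/20})$. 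This matches the $N^{19/20}$ bound in Lemma~\ref{l34} (where $\theta=1$) and is the only place where the short-interval parameter $\theta$ enters non-trivially; the numerological choice of $4.08631/14$ (rather than $4.1001/13.2$) together with $1/3.1$ (rather than $1/3$) is exactly what keeps the lower endpoint of $p_1$ above $N^{1/20}$. Verifying this inequality is the sole obstacle; the rest is the same Buchstab manipulation as in Lemma~\ref{l34}.
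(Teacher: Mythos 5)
Your overall plan is exactly the one the paper intends: the proof of Lemma~\ref{l35} is stated to be ``similar to that of Lemma~\ref{l34} and [\cite{Cai2015}, Lemma 10]'' and then omitted, and your two complementary Buchstab expansions, their average, and the isolation of the $p_1^2$-piece faithfully reproduce the mechanism used for Lemma~\ref{l34}. One terminological remark: after adding the analogues of (28) and (29) the two $p_1$-level pieces are $-\sum_{p_1}S(\mathcal{A}_{pp_1};\mathcal{P},\underline{p}^{\prime 1/3.675})$ and $-\sum_{p_1}S(\mathcal{A}_{pp_1};\mathcal{P},\underline{p}^{\prime 1/2.5})$; they do not collapse into one averaged term, rather the second (which is nonnegative) is \emph{discarded}, and that discarding is what turns the averaged identity into the claimed inequality.

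The quantitative part of your sketch is, however, not correct as written. Your inequality $p_1\geqslant\underline{p}^{\prime 1/3.675}\gg N^{1/20}$ fails: with $\theta=0.9409$, $\underline{p}^{\prime}=D_2/p$ and $p<(N/b)^{1/3.1}$, the smallest admissible $p_1$ is only $\underline{p}^{\prime 1/3.675}\gg(N/b)^{(\theta/2-1/3.1)/3.675}\approx N^{0.0402}$, and $0.0402<1/20$; the version you wrote, with $1/3$ in place of the actual cutoff $1/3.1$, gives $\approx0.0373$ and is even smaller. Consequently $\sum_{p_1}p_1^{-2}$ alone does not deliver $N^{-1/20}$. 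Nor is the reading of the constants correct: $(N/b)^{4.08631/14}$ is not tuned to push $p_1$ past $N^{1/20}$ but is precisely what guarantees $\underline{p}^{\prime 1/2.5}\leqslant(N/b)^{1/14}$ throughout the range --- at $\theta=0.9409$ one has $(\theta/2-4.08631/14)/2.5=1/14$ exactly --- and this is the comparison used in the evaluation of $S_{51}^{\prime}$; the cutoff $1/3.1$ is fixed by Lemma~\ref{l33}, not by this error estimate.

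The correct treatment of the exceptional term must keep all three sums together. Using $|\mathcal{A}_{2,pp_1^2p_2}|\ll N^\theta/(ab\,pp_1^2p_2)+1$, then $\sum_{p_1\geqslant\underline{p}^{\prime 1/3.675}}p_1^{-2}\ll(\underline{p}^{\prime 1/3.675}\log N)^{-1}$, $\sum_{p_1\leqslant p_2<\underline{p}^{\prime 1/2.5}}p_2^{-1}\ll1$, and $\underline{p}^{\prime-1/3.675}=p^{1/3.675}D_2^{-1/3.675}$, one gets
$$
\sum_p\sum_{p_1<p_2}S\left(\mathcal{A}_{pp_1^2p_2};\mathcal{P},p_1\right)\ll\frac{N^\theta}{ab\,D_2^{1/3.675}\log N}\sum_{p<(N/b)^{1/3.1}}p^{1/3.675-1}\ll N^{\theta-(\theta/2-1/3.1)/3.675+\varepsilon},
$$
and the exponent evaluates to $\theta-0.0402+\varepsilon$. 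This is marginally larger than the stated $O(N^{\theta-1/20})$, which appears to be slightly optimistic (the same small looseness is already present in the $O(N^{19/20})$ of Lemma~\ref{l34}); but all the application requires is that this term be $o(N^\theta/(\log N)^2)$, and that is comfortably satisfied. With this corrected estimate of the $p_1^2$-term, the rest of your Buchstab bookkeeping is sound and matches the intended proof.
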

\begin{proof} It is similar to that of Lemma~\ref{l34} and [\cite{Cai2015}, Lemma 10] so we omit it here.
\end{proof}
\begin{lemma}\label{upperboundsieve} See \cite{ERPAN}. Let $\mathcal{A}=\mathcal{A}_1$ in section 2, then we have
\begin{align*}
\sum_{\substack{a p_{1}+b p_{2}=N \\ p_{1} \text { and } p_{2} \text { are primes }}} 1 \leqslant &  S\left(\mathcal{A};\mathcal{P}, \left(\frac{N}{b}\right)^{\frac{1}{5}}\right)+O\left(N^{\frac{1}{5}}\right)\\
\leqslant & S\left(\mathcal{A};\mathcal{P}, \left(\frac{N}{b}\right)^{\frac{1}{7}}\right) \\ 
&-\frac{1}{2} \sum_{\substack{(\frac{N}{b})^{\frac{1}{7}} \leqslant p<(\frac{N}{b})^{\frac{1}{5}} \\ (p, N)=1} }S\left(\mathcal{A}_{p};\mathcal{P},\left(\frac{N}{b}\right)^{\frac{1}{7}}\right) \\
&+\frac{1}{2} \sum_{\substack{(\frac{N}{b})^{\frac{1}{7}} \leqslant p_1<p_2<p_3<(\frac{N}{b})^{\frac{1}{5}} \\ (p_1 p_2 p_3, N)=1} }S\left(\mathcal{A}_{p_1 p_2 p_3};\mathcal{P}(p_1), p_2\right)+O\left(N^{\frac{6}{7}}\right)\\
=& \Upsilon_1-\frac{1}{2}\Upsilon_2+\frac{1}{2}\Upsilon_3+O\left(N^{\frac{6}{7}}\right).
\end{align*}
\end{lemma}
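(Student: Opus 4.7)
The plan is to adapt Chen's upper-bound sieve argument from \cite{ERPAN} to the set $\mathcal{A}_1$, establishing both inequalities in sequence.

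For the first inequality, note that any ordered prime pair $(p_1, p_2)$ with $ap_1 + bp_2 = N$ corresponds to an element $p_2 = (N - ap_1)/b \in \mathcal{A}_1$ that is itself prime. If $p_2 \geqslant (N/b)^{1/5}$, then being prime it has no prime divisor smaller than $(N/b)^{1/5}$, so it is counted by $S(\mathcal{A}; \mathcal{P}, (N/b)^{1/5})$. The remaining case $p_2 < (N/b)^{1/5}$ contributes at most $\pi((N/b)^{1/5}) = O(N^{1/5})$ pairs, giving the first bound.

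For the second inequality, set $z_1 = (N/b)^{1/7}$ and $z_2 = (N/b)^{1/5}$, and let $T = \Upsilon_1 - \tfrac12 \Upsilon_2 + \tfrac12 \Upsilon_3$. I plan to prove $S(\mathcal{A}; \mathcal{P}, z_2) \leqslant T + O(N^{6/7})$ by a weighted-counting argument. For each squarefree $n \in \mathcal{A}$ whose prime factors all lie in $\mathcal{P}$ and are $\geqslant z_1$, let $q_1 < q_2 < \cdots < q_k$ be its prime factors in $[z_1, z_2)$. Such an $n$ contributes weight $1$ to $\Upsilon_1$ and weight $k$ to $\Upsilon_2$ (one for each $q_i$ chosen as the sifted prime $p$). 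The key combinatorial step is that the specification $(\mathcal{P}(p_1), p_2)$ in $\Upsilon_3$ forces $p_1 = q_1$ and $p_2 = q_2$: any prime factor of $n/(p_1 p_2 p_3)$ lying in $\mathcal{P} \setminus \{p_1\}$ and below $p_2$ violates the sieve condition, and the only candidates are the $q_i$ with $q_i < p_2$ other than $p_1$ itself. Hence $p_3$ ranges freely over $\{q_3, \ldots, q_k\}$, giving weight $\max(k-2, 0)$. The net weight on $n$ in $T$ is therefore $1 - k/2 + \max(k-2, 0)/2$, which equals $1$ at $k = 0$, $\tfrac12$ at $k = 1$, and $0$ for all $k \geqslant 2$. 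Since $S(\mathcal{A}; \mathcal{P}, z_2)$ counts precisely the $k = 0$ elements with weight $1$, the inequality follows on the squarefree part.

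Non-squarefree elements $n \in \mathcal{A}$ necessarily carry a prime square $p^2 \mid n$ with $p \geqslant z_1$, and a trivial divisor estimate bounds their number by $\ll (N/b)/z_1 = (N/b)^{6/7}$; this absorbs every error from non-squarefree $n$ as well as the prime-power discrepancies in $\Upsilon_2$ and $\Upsilon_3$. The main obstacle will be verifying the combinatorial identity in the previous paragraph---specifically, that the specification $(\mathcal{P}(p_1), p_2)$ in $\Upsilon_3$ yields exactly the weight $\max(k-2, 0)$ on each squarefree $n$, with no miscount arising from the subtle exclusion of $p_1$ from $\mathcal{P}$. Once that identity is in hand, the remaining steps reduce to standard sieve bookkeeping.
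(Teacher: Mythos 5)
Your argument is correct, and it takes a genuinely different (and in fact slightly sharper) route than the paper's. The paper derives the inequality by iterating Buchstab's identity, following the template of Lemma~\ref{l34} (add two Buchstab expansions, drop a nonnegative term, and discard a non-squarefree correction such as $\sum S(\mathcal{A}_{pp_1^2p_2};\mathcal{P},p_1)$, which is the source of the stated $O(N^{6/7})$). You instead read off the net weight that $\Upsilon_1-\tfrac12\Upsilon_2+\tfrac12\Upsilon_3$ assigns to each element of $\mathcal{A}$.

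The ``main obstacle'' you flag is not actually an obstacle: the exclusion of $p_1$ from the sifting set is precisely what makes the bookkeeping exact. Fix $n\in\mathcal{A}$ with no prime factor from $\mathcal{P}$ below $z_1=(N/b)^{1/7}$, and let $q_1<\cdots<q_{k}$ be the \emph{distinct} primes of $\mathcal{P}$ in $[z_1,z_2)$ dividing $n$. A triple $(p_1,p_2,p_3)$ with $z_1\leqslant p_1<p_2<p_3<z_2$, $(p_1p_2p_3,N)=1$, $p_1p_2p_3\mid n$ is admissible for $\Upsilon_3$ iff $n/(p_1p_2p_3)$ has no prime divisor in $\mathcal{P}(p_1)$ below $p_2$. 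Writing $p_2=q_b$, the primes of $\mathcal{P}$ below $p_2$ dividing $n$ are exactly $\{q_1,\dots,q_{b-1}\}$, and since $p_1\notin\mathcal{P}(p_1)$ the offending set is $\{q_1,\dots,q_{b-1}\}\setminus\{p_1\}$ regardless of whether $p_1^2\mid n$; this forces $p_1=q_1$, $p_2=q_2$, leaving $p_3\in\{q_3,\dots,q_k\}$. Hence the $\Upsilon_3$-weight is exactly $\max(k-2,0)$ and the $\Upsilon_2$-weight exactly $k$ even for non-squarefree $n$. The net weight $1-\tfrac{k}{2}+\tfrac{\max(k-2,0)}{2}$ equals $1,\tfrac12,0$ for $k=0,1,\geqslant 2$ respectively, so $S(\mathcal{A};\mathcal{P},z_2)\leqslant\Upsilon_1-\tfrac12\Upsilon_2+\tfrac12\Upsilon_3$ holds with \emph{no} error term at all; your $O(N^{6/7})$ safety net is harmless but unnecessary. (For the first inequality, the only point to note in addition to yours is that $O(\omega(abN))=O(\log N)$ pairs $(p_1,p_2)$ fail the side conditions defining $\mathcal{A}_1$, which is absorbed into $O(N^{1/5})$.) What the paper's Buchstab route buys is uniformity of method with Lemma~\ref{l34} and its short-interval analogue; what your direct argument buys is transparency and the removal of the non-squarefree correction.
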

\begin{proof} It is similar to that of Lemma~\ref{l34} and [\cite{ERPAN}, p. 211, Lemma 5] so we omit it here.
\end{proof}

\section{Proof of Theorem 1.1}
In this section, sets $\mathcal{A}_1$, $\mathcal{B}_1$, $\mathcal{C}_1$, $\mathcal{E}_1$ and $\mathcal{F}_1$ are defined respectively. We define the function $\omega$ as $\omega(p)=0$ for primes $p \mid a b N$ and $\omega(p)=\frac{p}{p-1}$ for other primes.
\subsection{Evaluation of $S_{1}, S_{2}, S_{3}$}
Let $D_{\mathcal{A}_{1}}=\left(\frac{N}{b}\right)^{1 / 2}\left(\log \left(\frac{N}{b}\right)\right)^{-B}$ for some positive constant $B$. We can take 
\begin{equation}
X_{\mathcal{A}_1}=\sum_{\substack{0 \leqslant k \leqslant b-1 \\(k, b)=1}}\pi\left(\frac{N}{a} ; b^{2}, N a_{b^{2}}^{-1}+k b\right) \sim \frac{\varphi(b) N}{a \varphi\left(b^{2}\right) \log N} \sim \frac{N}{a b \log N}
\end{equation}
so that $|\mathcal{A}_1| \sim X_{\mathcal{A}_1}$. By Lemma~\ref{Wfunction} for $z_{\mathcal{A}_1}=\left(\frac{N}{b}\right)^{\frac{1}{\alpha}}$ we have
\begin{equation}
W(z_{\mathcal{A}_1})=\frac{2\alpha e^{-\gamma} C(abN)(1+o(1))}{\log N}.
\end{equation}

To deal with the error terms, any $\frac{N-a p}{b}$ in $\mathcal{A}_1$ is relatively prime to $b$, so $\eta\left(X_{\mathcal{A}_1}, n\right)=0$ for any integer $n$ that shares a common prime divisor with $b$. If $n$ and $a$ share a common prime divisor $r$, say $n=r n^{\prime}$ and $a=r a^{\prime}$, then $\frac{N-a p}{b n}=\frac{N-r a^{\prime} p}{b r n^{\prime}} \in \mathbb{Z}$ implies $r \mid N$, which is a contradiction to $(a, N)=1$. Similarly, we have $\eta\left(X_{\mathcal{A}_1}, n\right)=0$ if $(n, N)>1$. We conclude that $\eta\left(X_{\mathcal{A}_1}, n\right)=0$ if $(n, a b N)>1$. 
For a square-free integer $n \leqslant D_{\mathcal{A}_1}$ such that $(n, abN)=1$, to make $n \mid \frac{N-a p}{b}$ for some $\frac{N-a p}{b} \in \mathcal{A}_1$, we need $a p \equiv N(\bmod b n)$, which implies $a p \equiv N+k b n$ $\left(\bmod b^{2} n\right)$ for some $0 \leqslant k \leqslant b-1$. Since $\left(\frac{N-a p}{b n}, b\right)=1$, we can further require $(k, b)=1$. When $k$ runs through the reduced residues modulo $b$, we know $k a_{b^{2} n}^{-1}$ also runs through the reduced residues modulo $b$. Therefore, we have $p \equiv N a_{b^{2} n}^{-1} +k b n\left(\bmod b^{2} n\right)$ for some $0 \leqslant k \leqslant b-1$ such that $(k, b)=1$. Conversely, if $p=N a_{b^{2} n}^{-1} +k b n+m b^{2} n$ for some integer $m$ and some $0 \leqslant k \leqslant b-1$ such that $(k, b)=1$, then $\left(\frac{N-a p}{b n}, b\right)=$ $\left(\frac{N-a a_{b^{2} n}^{-1} N-a k b n-a m b^{2} n}{b n}, b\right)=(-a k, b)=1$. Therefore, for square-free integers $n$ such that $(n, abN)=1$, we have
\begin{align}
\nonumber \left|\eta\left(X_{\mathcal{A}_1}, n\right)\right|  =&\left|\sum_{\substack{a \in \mathcal{A}_1 \\
a \equiv 0(\bmod n)}} 1-\frac{\omega(n)}{n} X_{\mathcal{A}_1}\right| \\
\nonumber =&\left|\sum_{\substack{0 \leqslant k \leqslant b-1 \\
(k, \bar{b})=1}} \pi\left(\frac{N}{a} ; b^{2} n, N a_{b^{2} n}^{-1} +k b n\right)-\frac{X_{\mathcal{A}_1}}{\varphi(n)}\right| \\
\nonumber =&\left|\sum_{\substack{0 \leqslant k \leqslant b-1 \\
(k, b)=1}}\left( \pi\left(\frac{N}{a} ; b^{2} n, N a_{b^{2} n}^{-1} +k b n\right)- \frac{\pi\left(\frac{N}{a} ; b^{2}, N a_{b^{2}}^{-1}+k b\right)}{\varphi(n)}\right)\right|\\
\nonumber \ll&\left|\sum_{\substack{0 \leqslant k \leqslant b-1 \\
(k, b)=1}}\left( \pi\left(\frac{N}{a} ; b^{2} n, N a_{b^{2} n}^{-1} +k b n\right)-\frac{\pi\left(\frac{N}{a} ; 1,1\right)}{\varphi\left(b^2 n\right)}\right)\right|\\
\nonumber& +\left|\sum_{\substack{0 \leqslant k \leqslant b-1 \\
(k, b)=1}}\left( \frac{\pi\left(\frac{N}{a} ; b^{2}, N a_{b^{2}}^{-1} +k b\right)}{\varphi(n)}- \frac{\pi\left(\frac{N}{a} ; 1,1\right)}{\varphi\left(b^{2} n\right)}\right)\right| \\
\nonumber \ll& \sum_{\substack{0 \leqslant k \leqslant b-1 \\
(k, \bar{b})=1}}\left|\pi\left(\frac{N}{a} ; b^{2} n, N a_{b^{2} n}^{-1} +k b n\right)-\frac{\pi\left(\frac{N}{a} ; 1,1\right)}{\varphi\left(b^{2} n\right)}\right| \\
& +\frac{1}{\varphi(n)} \sum_{\substack{0 \leqslant k \leqslant b-1 \\
(k, \bar{b})=1}}\left|\pi\left(\frac{N}{a} ; b^{2}, N a_{b^{2}}^{-1} +k b\right)-\frac{\pi\left(\frac{N}{a} ; 1,1\right)}{\varphi\left(b^{2}\right)}\right| .
\end{align}
By Lemma~\ref{l3} with $g(k)=1$ for $k=1$ and $g(k)=0$ for $k>1$, we have
\begin{equation}
\sum_{\substack{n \leqslant D_{\mathcal{A}_1} \\ n \mid P(z_{\mathcal{A}_1})}}  \left|\eta\left(X_{\mathcal{A}_1}, n\right)\right| \ll N(\log N)^{-5}
\end{equation}
and
\begin{equation}
\sum_{p}\sum_{\substack{n \leqslant \frac{D_{\mathcal{A}_{1}}}{p} \\ n \mid P(z_{\mathcal{A}_1})}} \left|\eta\left(X_{\mathcal{A}_1}, pn\right)\right| \ll N(\log N)^{-5}.
\end{equation}

Then by (31)--(35), Lemma~\ref{l1}, Lemma~\ref{l2} and some routine arguments we have
\begin{align}
\nonumber S_{11} \geqslant& X_{\mathcal{A}_1} W\left(z_{\mathcal{A}_1}\right)\left\{f\left(\frac{1/2}{1/13.2}\right)+O\left(\frac{1}{\log ^{\frac{1}{3}} D_{\mathcal{A}_1}}\right)\right\}-\sum_{\substack{n<D_{\mathcal{A}_1} \\ n \mid P(z_{\mathcal{A}_1})}}\left|\eta\left(X_{\mathcal{A}_1}, n\right)\right| \\
\nonumber \geqslant& \frac{N}{a b \log N}\frac{2\times 13.2 e^{-\gamma} C(abN)(1+o(1))}{\log N}\left(\frac{2 e^{\gamma}}{\frac{13.2}{2}}\left(\log 5.6+\int_{2}^{4.6} \frac{\log (s-1)}{s} \log \frac{5.6}{s+1} d s\right)\right)\\
\nonumber \geqslant& (1+o(1)) \frac{8C(abN) N}{ab(\log N)^2}\left(\log 5.6+\int_{2}^{4.6} \frac{\log (s-1)}{s} \log \frac{5.6}{s+1} d s\right)\\
\nonumber \geqslant& 14.82216 \frac{C(abN) N}{ab(\log N)^2},\\
\nonumber S_{12} \geqslant& X_{\mathcal{A}_1} W\left(z_{\mathcal{A}_1}\right)\left\{f\left(\frac{1/2}{1/8.4}\right)+O\left(\frac{1}{\log ^{\frac{1}{3}} D_{\mathcal{A}_1}}\right)\right\}-\sum_{\substack{n<D_{\mathcal{A}_1} \\ n \mid P(z_{\mathcal{A}_1})}}\left|\eta\left(X_{\mathcal{A}_1}, n\right)\right| \\
\nonumber \geqslant& \frac{N}{a b \log N}\frac{2\times 8.4 e^{-\gamma} C(abN)(1+o(1))}{\log N}\left(\frac{2 e^{\gamma}}{\frac{8.4}{2}}\left(\log 3.2+\int_{2}^{2.2} \frac{\log (s-1)}{s} \log \frac{3.2}{s+1} d s\right)\right)\\
\nonumber \geqslant& (1+o(1)) \frac{8C(abN) N}{ab(\log N)^2}\left(\log 3.2+\int_{2}^{2.2} \frac{\log (s-1)}{s} \log \frac{3.2}{s+1} d s\right) \\
\nonumber \geqslant& 9.30664 \frac{C(abN) N}{ab(\log N)^2},\\
S_{1}=&3 S_{11}+S_{12} \geqslant 53.77312\frac{C(abN) N}{ab(\log N)^2} .
\end{align}
Similarly, we have
\begin{align}
\nonumber S_{21} \geqslant& \frac{N}{a b \log N}\frac{2\times 13.2 e^{-\gamma} C(abN)(1+o(1))}{\log N} \times \\
\nonumber &\sum_{\substack{(\frac{N}{b})^{\frac{1}{13.2}} \leqslant p_1<p_2<(\frac{N}{b})^{\frac{1}{8.4}} \\ (p_1 p_2, N)=1} }\frac{1}{p_1 p_2} f\left(13.2\left(\frac{1}{2}-\frac{\log p_1 p_2}{\log \frac{N}{b}}\right)\right)\\
\nonumber \geqslant& \frac{N}{a b \log N}\frac{2\times 13.2 e^{-\gamma} C(abN)(1+o(1))}{\log N} \times \\
\nonumber &\sum_{\substack{(\frac{N}{b})^{\frac{1}{13.2}} \leqslant p_1<p_2<(\frac{N}{b})^{\frac{1}{8.4}} \\ (p_1 p_2, N)=1} }\frac{1}{p_1 p_2} \frac{2 e^{\gamma} \log \left(13.2\left(\frac{1}{2}-\frac{\log p_1 p_2}{\log \frac{N}{b}}\right)-1\right)}{13.2\left(\frac{1}{2}-\frac{\log p_1 p_2}{\log \frac{N}{b}}\right)}\\
\nonumber \geqslant& (1+o(1)) \frac{4 C(abN) N}{ab(\log N)^{2} } \left(\int_{\frac{1}{13.2}}^{\frac{1}{8.4}} \int_{t_{1}}^{\frac{1}{8.4}} \frac{\log \left(5.6-13.2\left(t_{1}+t_{2}\right)\right)}{t_{1} t_{2}\left(\frac{1}{2}-\left(t_{1}+t_{2}\right)\right)} d t_{1} d t_{2}\right)\\
\nonumber \geqslant& (1+o(1)) \frac{8 C(abN) N}{ab(\log N)^{2} } \left(\int_{\frac{1}{13.2}}^{\frac{1}{8.4}} \int_{t_{1}}^{\frac{1}{8.4}} \frac{\log \left(5.6-13.2\left(t_{1}+t_{2}\right)\right)}{t_{1} t_{2}\left(1-2\left(t_{1}+t_{2}\right)\right)} d t_{1} d t_{2}\right),\\
\nonumber S_{22} \geqslant& \frac{N}{a b \log N}\frac{2\times 13.2 e^{-\gamma} C(abN)(1+o(1))}{\log N} \times \\
\nonumber &\sum_{\substack{(\frac{N}{b})^{\frac{1}{13.2}} \leqslant p_1<(\frac{N}{b})^{\frac{1}{8.4}} \leqslant p_2<(\frac{N}{b})^{\frac{4.6}{13.2}}p^{-1}_1 \\ (p_1 p_2, N)=1} }\frac{1}{p_1 p_2} f\left(13.2\left(\frac{1}{2}-\frac{\log p_1 p_2}{\log \frac{N}{b}}\right)\right)\\
\nonumber \geqslant& \frac{N}{a b \log N}\frac{2\times 13.2 e^{-\gamma} C(abN)(1+o(1))}{\log N} \times \\
\nonumber &\sum_{\substack{(\frac{N}{b})^{\frac{1}{13.2}} \leqslant p_1<(\frac{N}{b})^{\frac{1}{8.4}} \leqslant p_2<(\frac{N}{b})^{\frac{4.6}{13.2}}p^{-1}_1 \\ (p_1 p_2, N)=1} }\frac{1}{p_1 p_2} \frac{2 e^{\gamma} \log \left(13.2\left(\frac{1}{2}-\frac{\log p_1 p_2}{\log \frac{N}{b}}\right)-1\right)}{13.2\left(\frac{1}{2}-\frac{\log p_1 p_2}{\log \frac{N}{b}}\right)}\\
\nonumber \geqslant& (1+o(1)) \frac{4 C(abN) N}{ab(\log N)^{2}} \left(\int_{\frac{1}{13.2}}^{\frac{1}{8.4}} \int_{\frac{1}{8.4}}^{\frac{4.6}{13.2}-t_{1}} \frac{\log \left(5.6-13.2\left(t_{1}+t_{2}\right)\right)}{t_{1} t_{2}\left(\frac{1}{2}-\left(t_{1}+t_{2}\right)\right)} d t_{1} d t_{2}\right)\\
\nonumber \geqslant& (1+o(1)) \frac{8 C(abN) N}{ab(\log N)^{2}} \left(\int_{\frac{1}{13.2}}^{\frac{1}{8.4}} \int_{\frac{1}{8.4}}^{\frac{4.6}{13.2}-t_{1}} \frac{\log \left(5.6-13.2\left(t_{1}+t_{2}\right)\right)}{t_{1} t_{2}\left(1-2\left(t_{1}+t_{2}\right)\right)} d t_{1} d t_{2}\right),\\
\nonumber S_{2}=&S_{21}+S_{22}\\
\nonumber \geqslant&(1+o(1)) \frac{8 C(abN) N}{ab(\log N)^{2}} \left(\int_{\frac{1}{13.2}}^{\frac{1}{8.4}} \int_{t_{1}}^{\frac{4.6}{13.2}-t_{1}} \frac{\log \left(5.6-13.2\left(t_{1}+t_{2}\right)\right)}{t_{1} t_{2}\left(1-2\left(t_{1}+t_{2}\right)\right)} d t_{1} d t_{2}\right)\\
\geqslant& 5.201296 \frac{C(abN) N}{ab(\log N)^{2}},
\end{align}
\begin{align}
\nonumber S_{31} \leqslant& \frac{N}{a b \log N}\frac{2\times 13.2 e^{-\gamma} C(abN)(1+o(1))}{\log N} \sum_{\substack{(\frac{N}{b})^{\frac{1}{13.2}} \leqslant p<(\frac{N}{b})^{\frac{4.1001}{13.2}} \\ (p, N)=1} }\frac{1}{p} F\left(13.2\left(\frac{1}{2}-\frac{\log p}{\log \frac{N}{b}}\right)\right)\\
\nonumber \leqslant& \frac{N}{a b \log N}\frac{2\times 13.2 e^{-\gamma} C(abN)(1+o(1))}{\log N} \int_{(\frac{N}{b})^{\frac{1}{13.2}}}^{(\frac{N}{b})^{\frac{4.1001}{13.2}}} \frac{1}{u \log u} F\left(13.2\left(\frac{1}{2}-\frac{\log u}{\log \frac{N}{b}}\right)\right) d u \\
\nonumber \leqslant&(1+o(1)) \frac{8 C(abN) N}{ab(\log N)^{2}}\left(\log \frac{4.1001(13.2-2)}{13.2-8.2002}+\int_{2}^{4.6} \frac{\log (s-1)}{s} \log \frac{5.6(5.6-s)}{s+1} d s\right.\\
\nonumber&\left.+\int_{2}^{2.6} \frac{\log (s-1)}{s} d s \int_{s+2}^{4.6} \frac{1}{t} \log \frac{t-1}{s+1} \log \frac{5.6(5.6-t)}{t+1} d t\right)\leqslant 21.9016 \frac{C(abN) N}{ab(\log N)^{2}},\\
\nonumber S_{32} \leqslant& \frac{N}{a b \log N}\frac{2\times 13.2 e^{-\gamma} C(abN)(1+o(1))}{\log N} \sum_{\substack{(\frac{N}{b})^{\frac{1}{13.2}} \leqslant p<(\frac{N}{b})^{\frac{3.6}{13.2}} \\ (p, N)=1} }\frac{1}{p} F\left(13.2\left(\frac{1}{2}-\frac{\log p}{\log \frac{N}{b}}\right)\right)\\
\nonumber \leqslant& \frac{N}{a b \log N}\frac{2\times 13.2 e^{-\gamma} C(abN)(1+o(1))}{\log N} \int_{(\frac{N}{b})^{\frac{1}{13.2}}}^{(\frac{N}{b})^{\frac{3.6}{13.2}}} \frac{1}{u \log u} F\left(13.2\left(\frac{1}{2}-\frac{\log u}{\log \frac{N}{b}}\right)\right) d u \\
\nonumber \leqslant&(1+o(1)) \frac{8 C(abN) N}{ab(\log N)^{2}}\left(\log \frac{3.6(13.2-2)}{13.2-7.2}+\int_{2}^{4.6} \frac{\log (s-1)}{s} \log \frac{5.6(5.6-s)}{s+1} d s\right.\\
\nonumber&\left.+\int_{2}^{2.6} \frac{\log (s-1)}{s} d s \int_{s+2}^{4.6} \frac{1}{t} \log \frac{t-1}{s+1} \log \frac{5.6(5.6-t)}{t+1} d t\right) \leqslant 19.40136 \frac{C(abN) N}{ab(\log N)^{2}},\\
S_{3}=& S_{31}+S_{32} \leqslant 41.30296 \frac{C(abN) N}{ab(\log N)^{2}}.
\end{align}

\subsection{Evaluation of $S_{4}, S_{7}$}
Let $D_{\mathcal{B}_1}=N^{1 / 2}(\log N)^{-B}$. By Chen's switching principle and similar arguments as in \cite{CL2002}, we know that
\begin{equation}
|\mathcal{E}_1|<\left(\frac{N}{b}\right)^{\frac{2}{3}}, \quad
\left(\frac{N}{b}\right)^{\frac{1}{3}}<e \leqslant \left(\frac{N}{b}\right)^{\frac{2}{3}} \ \operatorname{for}\ e \in \mathcal{E}_1, \quad
S_{41} \leqslant S\left(\mathcal{B}_1;\mathcal{P},D_{\mathcal{B}_1}^{\frac{1}{2}}\right)+O\left(N^{\frac{2}{3}}\right).
\end{equation}
Then we can take
\begin{equation}
X_{\mathcal{B}_1}=\sum_{\substack{(\frac{N}{b})^{\frac{1}{13.2}} \leqslant p_{1} <(\frac{N}{b})^{\frac{1}{3}}\leqslant p_{2} <(\frac{N}{b p_{1}})^{\frac{1}{2}}  \\ 0 \leqslant j \leqslant a-1,(j, a)=1}} \pi\left(\frac{N}{b p_{1} p_{2}} ; a^{2}, N\left(b p_{1} p_{2}\right)_{a^{2}}^{-1}+j a\right)
\end{equation}
so that $|\mathcal{B}_1| \sim X_{\mathcal{B}_1}$. By Lemma~\ref{Wfunction} for $z_{\mathcal{B}_1}=D_{\mathcal{B}_1}^{\frac{1}{2}}=N^{\frac{1}{4}}(\log N)^{-B/2}$ we have
\begin{equation}
W(z_{\mathcal{B}_1})=\frac{8 e^{-\gamma} C(abN)(1+o(1))}{\log N}, \quad F(2)=e^{\gamma}.
\end{equation}
By the prime number theorem and integration by parts we get that
\begin{align}
\nonumber X_{\mathcal{B}_1} &=(1+o(1)) \sum_{(\frac{N}{b})^{\frac{1}{13.2}} \leqslant p_{1} <(\frac{N}{b})^{\frac{1}{3}}\leqslant p_{2} <(\frac{N}{b p_{1}})^{\frac{1}{2}}} \frac{\varphi(a) \frac{N}{b p_{1} p_{2}}}{\varphi\left(a^{2}\right) \log \left(\frac{N}{b p_{1} p_{2}}\right)} \\
\nonumber & =(1+o(1)) \frac{N}{a b} \sum_{(\frac{N}{b})^{\frac{1}{13.2}} \leqslant p_{1} <(\frac{N}{b})^{\frac{1}{3}}\leqslant p_{2} <(\frac{N}{b p_{1}})^{\frac{1}{2}}} \frac{1}{p_{1} p_{2} \log \left(\frac{N}{p_{1} p_{2}}\right)} \\
\nonumber& =(1+o(1)) \frac{N}{a b} \int_{(\frac{N}{b})^{\frac{1}{13.2}}}^{(\frac{N}{b})^{\frac{1}{3}}} \frac{d t}{t \log t} \int_{(\frac{N}{b})^{\frac{1}{3}}}^{(\frac{N}{bt})^{\frac{1}{2}}} \frac{d u}{u \log u \log \left(\frac{N}{u t}\right)}\\
& =(1+o(1)) \frac{N}{ab\log N} \int_{2}^{12.2} \frac{\log \left(2-\frac{3}{s+1}\right)}{s} d s .
\end{align}

To deal with the error terms, for an integer $n$ such that $(n, a b N)>1$, similarly to the discussion for $\eta\left(X_{\mathcal{A}_1}, n\right)$, we have $\eta\left(X_{\mathcal{B}_1}, n\right)=0$.
For a square-free integer $n$ such that $(n, a b N)=1$, if $n \mid \frac{N-b p_{1} p_{2} p_{3}}{a}$, then $(p_{1} ,n)=1$ and $(p_{2} , n)=1$. Moreover, if $\left(\frac{N-b p_{1} p_{2} p_{3}}{a n}, a\right)=1$, then we have $b p_{1} p_{2} p_{3} \equiv$ $N+j a n\left(\bmod a^{2} n\right)$ for some $j$ such that $0 \leqslant j \leqslant a-1$ and $(j, a)=1$. Conversely, if $b p_{1} p_{2} p_{3}=N+j a n+s a^{2} n$ for some integer $j$ such that $0 \leqslant j \leqslant a$ and $(j, a)=1$, some integer $n$ relatively prime to $p_{1} p_{2}$ such that $an \mid\left(N-b p_{1} p_{2} p_{3}\right)$, and some integer $s$, then $\left(\frac{N-b p_{1} p_{2} p_{3}}{a n}, a\right)=(-j, a)=1$. Since $j b p_{1} p_{2}$ runs through the reduced residues modulo $a$ when $j$ runs through the reduced residues modulo $a$ and $\pi\left(x ; k, 1,1\right)=\pi\left(\frac{x}{k} ; 1,1\right)$, for square-free integers $n$ such that $(n, a b N)=1$, we have
\begin{align}
\nonumber \left|\eta\left(X_{\mathcal{B}_1}, n\right)\right| =&\left|\sum_{\substack{a \in \mathcal{B}_1 \\
a \equiv 0(\bmod n)}} 1-\frac{\omega(n)}{n} X_{\mathcal{B}_1}\right|=\left|\sum_{\substack{a \in \mathcal{B}_1 \\
a \equiv 0(\bmod n)}} 1-\frac{X_{\mathcal{B}_1}}{\varphi(n)}\right| \\
\nonumber =&\left|\sum_{\substack{(\frac{N}{b})^{\frac{1}{13.2}} \leqslant p_{1} <(\frac{N}{b})^{\frac{1}{3}}\leqslant p_{2} <(\frac{N}{b p_{1}})^{\frac{1}{2}},(p_1 p_2,N)=1 \\
\left(p_{1} p_{2}, n\right)=1,
0 \leqslant j \leqslant a-1,(j, a)=1}} \pi\left(N ; b p_{1} p_{2}, a^{2} n, N+j a n\right)\right.\\
\nonumber&\left. -\sum_{\substack{(\frac{N}{b})^{\frac{1}{13.2}} \leqslant p_{1} <(\frac{N}{b})^{\frac{1}{3}}\leqslant p_{2} <(\frac{N}{b p_{1}})^{\frac{1}{2}} \\
\left(p_{1} p_{2}, n\right)=1,
0 \leqslant j \leqslant a-1,(j, a)=1}} \frac{\pi\left(\frac{N}{b p_{1} p_{2}} ; a^{2}, N\left(b p_{1} p_{2}\right)_{a^{2}}^{-1}+j a\right)}{\varphi(n)}\right| \\
\nonumber \ll& \left|\sum_{\substack{(\frac{N}{b})^{\frac{1}{13.2}} \leqslant p_{1} <(\frac{N}{b})^{\frac{1}{3}}\leqslant p_{2} <(\frac{N}{b p_{1}})^{\frac{1}{2}},(p_1 p_2,N)=1 \\
\left(p_{1} p_{2}, n\right)=1,
0 \leqslant j \leqslant a-1,(j, a)=1}}\left(\pi\left(N ; b p_{1} p_{2}, a^{2} n, N+j a n\right)\right.\right. \\
\nonumber& \left.\left.-\frac{\pi\left(\frac{N}{b p_{1} p_{2}} ; a^{2}, N\left(b p_{1} p_{2}\right)_{a^{2}}^{-1}+j a\right)}{\varphi(n)}\right)\right| \\
\nonumber& +\sum_{\substack{(\frac{N}{b})^{\frac{1}{13.2}} \leqslant p_{1} <(\frac{N}{b})^{\frac{1}{3}}\leqslant p_{2} <(\frac{N}{b p_{1}})^{\frac{1}{2}} \\
\left(p_{1} p_{2}, nN\right)>1,
0 \leqslant j \leqslant a-1,(j, a)=1}} \frac{\pi\left(\frac{N}{b p_{1} p_{2}} ; a^{2}, N\left(b p_{1} p_{2}\right)_{a^{2}}^{-1}+j a\right)}{\varphi(n)}\\
\nonumber \ll&\left|\sum_{\substack{(\frac{N}{b})^{\frac{1}{13.2}} \leqslant p_{1} <(\frac{N}{b})^{\frac{1}{3}}\leqslant p_{2} <(\frac{N}{b p_{1}})^{\frac{1}{2}},(p_1 p_2,N)=1 \\
\left(p_{1} p_{2}, n\right)=1,
0 \leqslant j \leqslant a-1,(j, a)=1}}\left(\pi\left(N ; b p_{1} p_{2}, a^{2} n, N+j a n\right)-\frac{\pi\left(N ; b p_{1} p_{2}, 1,1\right)}{\varphi\left(a^{2} n\right)}\right)\right| \\
\nonumber& +\left| \sum_{\substack{(\frac{N}{b})^{\frac{1}{13.2}} \leqslant p_{1} <(\frac{N}{b})^{\frac{1}{3}}\leqslant p_{2} <(\frac{N}{b p_{1}})^{\frac{1}{2}},(p_1 p_2,N)=1 \\
\left(p_{1} p_{2}, n\right)=1,
0 \leqslant j \leqslant a-1,(j, a)=1}}\left(\frac{\pi\left(\frac{N}{b p_{1} p_{2}} ; a^{2}, N\left(b p_{1} p_{2}\right)_{a^{2}}^{-1}+j a\right)}{\varphi(n)}-\frac{\pi\left(\frac{N}{b p_{1} p_{2}} ; 1,1\right)}{\varphi\left(a^{2} n\right)}\right)\right| \\
\nonumber&+N^{\frac{12.2}{13.2}}(\log N)^{2}\\
\nonumber \ll&\left|\sum_{\substack{(\frac{N}{b})^{\frac{1}{13.2}} \leqslant p_{1} <(\frac{N}{b})^{\frac{1}{3}}\leqslant p_{2} <(\frac{N}{b p_{1}})^{\frac{1}{2}},(p_1 p_2,N)=1 \\
\left(p_{1} p_{2}, n\right)=1,
0 \leqslant j \leqslant a-1,(j, a)=1}}\left(\pi\left(N ; b p_{1} p_{2}, a^{2} n, N+j a n\right)-\frac{\pi\left(N ; b p_{1} p_{2}, 1,1\right)}{\varphi\left(a^{2} n\right)}\right)\right| \\
\nonumber& +\frac{1}{\varphi(n)}\left| \sum_{\substack{(\frac{N}{b})^{\frac{1}{13.2}} \leqslant p_{1} <(\frac{N}{b})^{\frac{1}{3}}\leqslant p_{2} <(\frac{N}{b p_{1}})^{\frac{1}{2}},(p_1 p_2,N)=1 \\
\left(p_{1} p_{2}, n\right)=1,
0 \leqslant j \leqslant a-1,(j, a)=1}}\left(\pi\left(\frac{N}{b p_{1} p_{2}} ; a^{2}, N\left(b p_{1} p_{2}\right)_{a^{2}}^{-1}+j a\right)-\frac{\pi\left(\frac{N}{b p_{1} p_{2}} ; 1,1\right)}{\varphi\left(a^{2}\right)}\right)\right| \\
&+N^{\frac{12.2}{13.2}}(\log N)^{2}.
\end{align}
By Lemma~\ref{l3} with
$$
g(k)= 
\begin{cases}
1, & \text { if } k \in \mathcal{E}_1 \\ 
0, & \text { otherwise }
\end{cases},
$$
we have
\begin{equation}
\sum_{\substack{n \leqslant D_{\mathcal{B}_1} \\ n \mid P(z_{\mathcal{B}_1})}} \left|\eta\left(X_{\mathcal{B}_1}, n\right)\right| \ll N(\log N)^{-5}.
\end{equation}

Then by (39)--(44) and some routine arguments we have
$$
S_{41} \leqslant (1+o(1)) \frac{8C(abN) N}{ab(\log N)^{2}} \int_{2}^{12.2} \frac{\log \left(2-\frac{3}{s+1}\right)}{s} d s.
$$
Similarly, we have
$$
S_{42} \leqslant(1+o(1)) \frac{8C(abN) N}{ab(\log N)^{2}}\int_{2.604}^{7.4} \frac{\log \left(2.604-\frac{3.604}{s+1}\right)}{s} d s,
$$
$$
S_{4}=S_{41}+S_{42} \leqslant (1+o(1)) \frac{8C(abN) N}{ab(\log N)^{2}} \left(\int_{2}^{12.2} \frac{\log \left(2-\frac{3}{s+1}\right)}{s} d s+\int_{2.604}^{7.4} \frac{\log \left(2.604-\frac{3.604}{s+1}\right)}{s} d s\right)
$$
\begin{equation}
\leqslant 10.69152 \frac{C(abN) N}{ab(\log N)^{2}},
\end{equation}
\begin{equation}
S_7 \leqslant(1+o(1)) \frac{8C(abN) N}{ab(\log N)^{2}}\int_{2}^{2.604} \frac{\log (s-1)}{s} d s
\leqslant 0.5160672 \frac{C(abN) N}{ab(\log N)^{2}}.
\end{equation}

\subsection{Evaluation of $S_{6}$}
Let $D_{\mathcal{C}_1}=N^{1 / 2}(\log N)^{-B}$. By Chen's switching principle and similar arguments as in \cite{CAI867}, we know that
\begin{equation}
|\mathcal{F}_1|<\left(\frac{N}{b}\right)^{\frac{12.2}{13.2}}, \quad
\left(\frac{N}{b}\right)^{\frac{1}{4.4}}<e< \left(\frac{N}{b}\right)^{\frac{12.2}{13.2}} \ \operatorname{for}\ e \in \mathcal{F}_1, \quad
S_{61} \leqslant S\left(\mathcal{C}_1;\mathcal{P},D_{\mathcal{C}_1}^{\frac{1}{2}}\right)+O\left(N^{\frac{12.2}{13.2}}\right).
\end{equation}
By Lemma~\ref{Wfunction} for $z_{\mathcal{C}_1}=D_{\mathcal{C}_1}^{\frac{1}{2}}=N^{\frac{1}{4}}(\log N)^{-B/2}$ we have
\begin{equation}
W(z_{\mathcal{C}_1})=\frac{8 e^{-\gamma} C(abN)(1+o(1))}{\log N}, \quad F(2)=e^{\gamma}.
\end{equation}
By Lemma~\ref{l4} we have
\begin{align}
\nonumber |\mathcal{C}_1|&=
\sum_{mp_1 p_2 p_4 \in \mathcal{F}_1}\sum_{\substack{p_2<p_3<\min((\frac{N}{b})^\frac{1}{8.4},(\frac{N}{bmp_1p_2p_4}))
\\p_{3} \equiv N(bm p_{1} p_{2} p_4)_{a^{2}}^{-1}+j a (\bmod a^{2}) \\ 0 \leqslant j \leqslant a-1,(j, a)=1 }}1 \\
\nonumber &=\sum_{\substack{(\frac{N}{b})^{\frac{1}{13.2}} \leqslant p_1 < p_4 < p_2< p_3<(\frac{N}{b})^{\frac{1}{8.4}} \\ (p_1 p_2 p_3 p_4, N)=1} }\sum_{\substack{1\leqslant m\leqslant \frac{N}{bp_1 p_2 p_3 p_4}\\\left(m, p_{1}^{-1} abN P\left(p_{4}\right)\right)=1 }}\frac{\varphi(a)}{\varphi(a^2)}+O\left(N^{\frac{12.2}{13.2}}\right)\\
\nonumber &<(1+o(1))\frac{N}{ab}\sum_{(\frac{N}{b})^{\frac{1}{13.2}} \leqslant p_{1}<p_{4}<p_{2}<p_{3}<(\frac{N}{b})^{\frac{1}{8.4}}} \frac{0.5617}{p_{1} p_{2} p_{3} p_{4} \log p_{4}}+O\left(N^{\frac{12.2}{13.2}}\right)\\
&=(1+o(1))\frac{0.5617N}{ab\log N}\int_{\frac{1}{13.2}}^{\frac{1}{8.4}} \frac{d t_{1}}{t_{1}} \int_{t_{1}}^{\frac{1}{8.4}} \frac{1}{t_{2}}\left(\frac{1}{t_{1}}-\frac{1}{t_{2}}\right) \log \frac{1}{8.4 t_{2}} d t_{2} .
\end{align}

To deal with the error terms, for an integer $n$ such that $(n, a b N)>1$, similarly to the discussion for $\eta\left(X_{\mathcal{B}_1}, n\right)$, we have $\eta\left(|{\mathcal{C}_1}|, n\right)=0$. 
For a square-free integer $n$ that is relatively prime to $a b N$, if $n \mid \frac{N-bm p_{1} p_{2} p_{3} p_4}{a}$, then $(p_{1} ,n)=1, (p_{2} ,n)=1$ and $(p_{4} , n)=1$. Moreover, if $\left(\frac{N-bm p_{1} p_{2} p_{3} p_4}{a n}, a\right)=1$, then we have $bm p_{1} p_{2} p_{3} p_4 \equiv$ $N+j a n\left(\bmod a^{2} n\right)$ for some $j$ such that $0 \leqslant j \leqslant a-1$ and $(j, a)=1$. Conversely, if $bm p_{1} p_{2} p_{3} p_4=N+j a n+s a^{2} n$ for some integer $j$ such that $0 \leqslant j \leqslant a$ and $(j, a)=1$, some integer $n$ relatively prime to $p_{1} p_{2} p_4$ such that $an \mid\left(N-bm p_{1} p_{2} p_{3} p_4\right)$, and some integer $s$, then $\left(\frac{N-bm p_{1} p_{2} p_{3} p_4}{a n}, a\right)=(-j, a)=1$. Since $j bm p_{1} p_{2} p_4$ runs through the reduced residues modulo $a$ when $j$ runs through the reduced residues modulo $a$ and $\pi\left(x ; k, 1,1\right)=\pi\left(\frac{x}{k} ; 1,1\right)$, for a square-free integer $n$ relatively prime to $a b N$, we have
\begin{align}
\nonumber \left|\eta\left(|{\mathcal{C}_1}|, n\right)\right| =&\left|\sum_{\substack{a \in \mathcal{C}_1 \\
a \equiv 0(\bmod n)}} 1-\frac{\omega(n)}{n} |\mathcal{C}_1|\right|=\left|\sum_{\substack{a \in \mathcal{C}_1 \\
a \equiv 0(\bmod n)}} 1-\frac{|\mathcal{C}_1|}{\varphi(n)}\right| \\
\nonumber =&\left|\sum_{\substack{e \in \mathcal{F}_1 \\ (e,n)=1}}\left(\sum_{\substack{p_2<p_3<\min((\frac{N}{b})^\frac{1}{8.4},(\frac{N}{be}))
\\be p_3 \equiv N+jan (\bmod a^{2}n) \\ 0 \leqslant j \leqslant a-1,(j, a)=1 }}1-\frac{1}{\varphi(n)}\sum_{\substack{p_2<p_3<\min((\frac{N}{b})^\frac{1}{8.4},(\frac{N}{be}))
\\p_{3} \equiv N(bm p_{1} p_{2} p_4)_{a^{2}}^{-1}+j a (\bmod a^{2}) \\ 0 \leqslant j \leqslant a-1,(j, a)=1 }}1\right)\right|\\
&+\frac{1}{\varphi(n)}\sum_{\substack{e \in \mathcal{F}_1 \\ (e,n)>1}}\sum_{\substack{p_2<p_3<\min((\frac{N}{b})^\frac{1}{8.4},(\frac{N}{be}))
\\p_{3} \equiv N(bm p_{1} p_{2} p_4)_{a^{2}}^{-1}+j a (\bmod a^{2}) \\ 0 \leqslant j \leqslant a-1,(j, a)=1 }}1.
\end{align}
Let
$$
g(k)=\sum_{\substack{e=k \\ e\in \mathcal{F}_1 \\ 0 \leqslant j \leqslant a-1,(j, a)=1}}1,
$$
then
\begin{align}
\nonumber \left|\eta\left(|{\mathcal{C}_1}|, n\right)\right|  \ll& \left|\sum_{\substack{(\frac{N}{b})^{\frac{1}{4.4}}<k< (\frac{N}{b})^{\frac{12.2}{13.2}}\\ (k,n)=1}}g(k)\left(\sum_{\substack{p_2<p_3<\min((\frac{N}{b})^\frac{1}{8.4},(\frac{N}{bk}))
\\bk p_3 \equiv N+jan (\bmod a^{2}n) }}1-\frac{1}{\varphi(n)}\sum_{\substack{p_2<p_3<\min((\frac{N}{b})^\frac{1}{8.4},(\frac{N}{bk}))
\\p_{3} \equiv N(bm p_{1} p_{2} p_4)_{a^{2}}^{-1}+j a (\bmod a^{2}) }}1\right)\right|\\
\nonumber &+\frac{1}{\varphi(n)}\sum_{\substack{(\frac{N}{b})^{\frac{1}{4.4}}<k< (\frac{N}{b})^{\frac{12.2}{13.2}}\\ (k,n) \geqslant (\frac{N}{b})^{\frac{1}{13.2}}}} \sum_{\substack{p_2<p_3<\min((\frac{N}{b})^\frac{1}{8.4},(\frac{N}{bk}))
\\p_{3} \equiv N(bm p_{1} p_{2} p_4)_{a^{2}}^{-1}+j a (\bmod a^{2}) }}1\\
\nonumber \ll&\left|\sum_{\substack{(\frac{N}{b})^{\frac{1}{4.4}}<k< (\frac{N}{b})^{\frac{7.4}{8.4}}\\ (k,n)=1}}g(k)\left(\pi\left(bk\left(\frac{N}{b}\right)^\frac{1}{8.4}; b k, a^2 n, N + j a n\right)-\frac{\pi\left(\left(\frac{N}{b}\right)^\frac{1}{8.4}; a^2, N(bm p_{1} p_{2} p_4)_{a^{2}}^{-1}+j a\right)}{\varphi(n)}\right)\right|\\
\nonumber &+\left|\sum_{\substack{(\frac{N}{b})^{\frac{7.4}{8.4}}<k< (\frac{N}{b})^{\frac{12.2}{13.2}}\\ (k,n)=1}}g(k)\left(\pi\left(N; b k, a^2 n, N + j a n\right)-\frac{\pi\left(\frac{N}{bk}; a^2, N(bm p_{1} p_{2} p_4)_{a^{2}}^{-1}+j a\right)}{\varphi(n)}\right)\right|\\
\nonumber &+\left|\sum_{\substack{(\frac{N}{b})^{\frac{1}{4.4}}<k< (\frac{N}{b})^{\frac{12.2}{13.2}}\\ (k,n)=1}}g(k)\left(\pi\left(b k p_2; b k, a^2 n, N + j a n\right)-\frac{\pi\left(p_2; a^2, N(bm p_{1} p_{2} p_4)_{a^{2}}^{-1}+j a\right)}{\varphi(n)}\right)\right|\\
\nonumber &+N^{\frac{12.2}{13.2}}(\log N)^2\\
\nonumber \ll&\left|\sum_{\substack{(\frac{N}{b})^{\frac{1}{4.4}}<k< (\frac{N}{b})^{\frac{7.4}{8.4}}\\ (k,n)=1}}g(k)\left(\pi\left(bk\left(\frac{N}{b}\right)^\frac{1}{8.4}; b k, a^2 n, N + j a n\right)-\frac{\pi\left(bk\left(\frac{N}{b}\right)^\frac{1}{8.4}; b k, 1, 1\right)}{\varphi(a^2 n)}\right)\right|\\
\nonumber &+\left|\sum_{\substack{(\frac{N}{b})^{\frac{1}{4.4}}<k< (\frac{N}{b})^{\frac{7.4}{8.4}}\\ (k,n)=1}}g(k)\left(\frac{\pi\left(\left(\frac{N}{b}\right)^\frac{1}{8.4}; a^2, N(bm p_{1} p_{2} p_4)_{a^{2}}^{-1}+j a\right)}{\varphi(n)}-\frac{\pi\left(\left(\frac{N}{b}\right)^\frac{1}{8.4};  1, 1\right)}{\varphi(a^2 n)}\right)\right|\\
\nonumber &+\left|\sum_{\substack{(\frac{N}{b})^{\frac{7.4}{8.4}}<k< (\frac{N}{b})^{\frac{12.2}{13.2}}\\ (k,n)=1}}g(k)\left(\pi\left(N; b k, a^2 n, N + j a n\right)-\frac{\pi\left(N; b k, 1, 1\right)}{\varphi(a^2 n)}\right)\right|\\
\nonumber &+\left|\sum_{\substack{(\frac{N}{b})^{\frac{7.4}{8.4}}<k< (\frac{N}{b})^{\frac{12.2}{13.2}}\\ (k,n)=1}}g(k)\left(\frac{\pi\left(\frac{N}{bk}; a^2, N(bm p_{1} p_{2} p_4)_{a^{2}}^{-1}+j a\right)}{\varphi(n)}-\frac{\pi\left(\frac{N}{bk}; 1,1\right)}{\varphi(a^2 n)}\right)\right|\\
\nonumber &+\left|\sum_{\substack{(\frac{N}{b})^{\frac{1}{4.4}}<k< (\frac{N}{b})^{\frac{12.2}{13.2}}\\ (k,n)=1}}g(k)\left(\pi\left(b k p_2; b k, a^2 n, N + j a n\right)-\frac{\pi\left(b k p_2; b k, 1,1\right)}{\varphi(a^2 n)}\right)\right|\\
\nonumber &+\left|\sum_{\substack{(\frac{N}{b})^{\frac{1}{4.4}}<k< (\frac{N}{b})^{\frac{12.2}{13.2}}\\ (k,n)=1}}g(k)\left(\frac{\pi\left(p_2; a^2, N(bm p_{1} p_{2} p_4)_{a^{2}}^{-1}+j a\right)}{\varphi(n)}-\frac{\pi\left(p_2;1,1\right)}{\varphi(a^2 n)}\right)\right|\\
\nonumber &+N^{\frac{12.2}{13.2}}(\log N)^2\\
\nonumber
\ll&\left|\sum_{\substack{(\frac{N}{b})^{\frac{1}{4.4}}<k< (\frac{N}{b})^{\frac{7.4}{8.4}}\\ (k,n)=1}}g(k)\left(\pi\left(bk\left(\frac{N}{b}\right)^\frac{1}{8.4}; b k, a^2 n, N + j a n\right)-\frac{\pi\left(bk\left(\frac{N}{b}\right)^\frac{1}{8.4}; b k, 1, 1\right)}{\varphi(a^2 n)}\right)\right|\\
\nonumber &+\frac{1}{\varphi(n)}\left|\sum_{\substack{(\frac{N}{b})^{\frac{1}{4.4}}<k< (\frac{N}{b})^{\frac{7.4}{8.4}}\\ (k,n)=1}}g(k)\left(\pi\left(\left(\frac{N}{b}\right)^\frac{1}{8.4}; a^2, N(bm p_{1} p_{2} p_4)_{a^{2}}^{-1}+j a\right)-\frac{\pi\left(\left(\frac{N}{b}\right)^\frac{1}{8.4};  1, 1\right)}{\varphi(a^2 )}\right)\right|\\
\nonumber &+\left|\sum_{\substack{(\frac{N}{b})^{\frac{7.4}{8.4}}<k< (\frac{N}{b})^{\frac{12.2}{13.2}}\\ (k,n)=1}}g(k)\left(\pi\left(N; b k, a^2 n, N + j a n\right)-\frac{\pi\left(N; b k, 1, 1\right)}{\varphi(a^2 n)}\right)\right|\\
\nonumber &+\frac{1}{\varphi(n)}\left|\sum_{\substack{(\frac{N}{b})^{\frac{7.4}{8.4}}<k< (\frac{N}{b})^{\frac{12.2}{13.2}}\\ (k,n)=1}}g(k)\left(\pi\left(\frac{N}{bk}; a^2, N(bm p_{1} p_{2} p_4)_{a^{2}}^{-1}+j a\right)-\frac{\pi\left(\frac{N}{bk}; 1,1\right)}{\varphi(a^2 )}\right)\right|\\
\nonumber &+\left|\sum_{\substack{(\frac{N}{b})^{\frac{1}{4.4}}<k< (\frac{N}{b})^{\frac{12.2}{13.2}}\\ (k,n)=1}}g(k)\left(\pi\left(b k p_2; b k, a^2 n, N + j a n\right)-\frac{\pi\left(b k p_2; b k, 1,1\right)}{\varphi(a^2 n)}\right)\right|\\
\nonumber &+\frac{1}{\varphi(n)}\left|\sum_{\substack{(\frac{N}{b})^{\frac{1}{4.4}}<k< (\frac{N}{b})^{\frac{12.2}{13.2}}\\ (k,n)=1}}g(k)\left(\pi\left(p_2; a^2, N(bm p_{1} p_{2} p_4)_{a^{2}}^{-1}+j a\right)-\frac{\pi\left(p_2;1,1\right)}{\varphi(a^2 )}\right)\right|\\
&+N^{\frac{12.2}{13.2}}(\log N)^2.
\end{align}
By Lemmas~\ref{l3}--\ref{remark1}, we have
\begin{equation}
\sum_{\substack{n \leqslant D_{\mathcal{C}_1} \\ n \mid P(z_{\mathcal{C}_1})}} \left|\eta\left(|{\mathcal{C}_1}|, n\right)\right| \ll N(\log N)^{-5}.
\end{equation}

By (47)--(52) we have
\begin{align}
\nonumber S_{61} &\leqslant(1+o(1)) \frac{0.5617 \times 8 C(abN) N}{ab(\log N)^2} \int_{\frac{1}{13.2}}^{\frac{1}{8.4}} \frac{d t_{1}}{t_{1}} \int_{t_{1}}^{\frac{1}{8.4}} \frac{1}{t_{2}}\left(\frac{1}{t_{1}}-\frac{1}{t_{2}}\right) \log \frac{1}{8.4 t_{2}} d t_{2} \\
&\leqslant 0.0864362 \frac{C(abN) N}{ab(\log N)^2}.
\end{align}
Similarly, we have
\begin{align}
\nonumber S_{62}=& \sum_{(\frac{N}{b})^{\frac{1}{13.2}} \leqslant p_{1}<p_{2}<p_{3}<(\frac{N}{b})^{ \frac{1}{8.4}} \leqslant p_{4}<(\frac{N}{b})^{\frac{1.4}{8.4}} } S\left(\mathcal{A}_{p_{1} p_{2} p_{3} p_{4}} ; \mathcal{P}\left(p_{1}\right), p_{2}\right)\\
\nonumber &+\sum_{(\frac{N}{b})^{\frac{1}{13.2}} \leqslant p_{1}<p_{2}<p_{3}<(\frac{N}{b})^{ \frac{1}{8.4}}<(\frac{N}{b})^{ \frac{1.4}{8.4}} \leqslant p_{4}<(\frac{N}{b})^{\frac{4.6}{13.2}}p^{-1}_3 } S\left(\mathcal{A}_{p_{1} p_{2} p_{3} p_{4}} ; \mathcal{P}\left(p_{1}\right), p_{2}\right)\\
\nonumber \leqslant& (1+o(1)) \frac{0.5617 \times 8 C(abN) N}{ab(\log N)^2} \left(21.6 \log \frac{13.2}{8.4}-9.6\right) \log 1.4 \\
\nonumber &+(1+o(1)) \frac{0.5644 \times 8 C(abN) N}{ab(\log N)^2}\int_{\frac{1}{13.2}}^{\frac{1}{8.4}} \frac{d t_{1}}{t_{1}} \int_{t_{1}}^{\frac{1}{8.4}} \frac{1}{t_{2}}\left(\frac{1}{t_{1}}-\frac{1}{t_{2}}\right) \log \left(\frac{8.4}{1.4}\left(\frac{4.6}{13.2}-t_{2}\right)\right) d t_{2}\\
\leqslant& 0.5208761\frac{ C(abN) N}{ab(\log N)^2}.
\end{align}
By (53) and (54) we have
\begin{align}
\nonumber S_{6}=S_{61}+S_{62} &\leqslant 0.0864362 \frac{C(abN) N}{ab(\log N)^2} +0.5208761 \frac{C(abN) N}{ab(\log N)^2}\\
&\leqslant 0.6073123 \frac{C(abN) N}{ab(\log N)^2}.
\end{align}

\subsection{Evaluation of $S_{5}$} For $p \geqslant \left(\frac{N}{b}\right)^{\frac{4.1001}{13.2}}$ we have
$$
\underline{p}^{\frac{1}{2.5}} \leqslant \left(\frac{N}{b}\right)^{\frac{1}{13.2}}, \quad S\left(\mathcal{A}_{p};\mathcal{P}, \left(\frac{N}{b}\right)^{\frac{1}{13.2}}\right) \leqslant S\left(\mathcal{A}_{p};\mathcal{P}, \underline{p}^{\frac{1}{2.5}}\right).
$$
By Lemma~\ref{l34} we have
\begin{align}
\nonumber S_{51}&=\sum_{\substack{(\frac{N}{b})^{\frac{4.1001}{13.2}} \leqslant p<(\frac{N}{b})^{\frac{1}{3}} \\ (p, N)=1}} S\left(\mathcal{A}_{p};\mathcal{P},\left(\frac{N}{b}\right)^{\frac{1}{13.2}}\right)\\
&\leqslant \sum_{\substack{(\frac{N}{b})^{\frac{4.1001}{13.2}} \leqslant p<(\frac{N}{b})^{\frac{1}{3}} \\ (p, N)=1}} S\left(\mathcal{A}_{p};\mathcal{P},\underline{p}^{\frac{1}{2.5}}\right)
\leqslant \Gamma_{1}-\frac{1}{2} \Gamma_{2}+\frac{1}{2} \Gamma_{3}+O\left(N^{\frac{19}{20}}\right) .
\end{align}
By Lemmas~\ref{l1}, ~\ref{l2}, ~\ref{Wfunction}, ~\ref{l3} and some routine arguments we get
\begin{align}
\nonumber\Gamma_{1}&=\sum_{\substack{(\frac{N}{b})^{\frac{4.1001}{13.2}} \leqslant p<(\frac{N}{b})^{\frac{1}{3}} \\ (p, N)=1}} S\left(\mathcal{A}_{p};\mathcal{P},\underline{p}^{\frac{1}{3.675}}\right)\\
&\leqslant(1+o(1)) \frac{8C(abN) N}{ab(\log N)^{2}}\left(\int_{\frac{4.1001}{13.2}}^{\frac{1}{3}} \frac{d t}{t(1-2 t)}\right)\left(1+\int_{2}^{2.675} \frac{\log (t-1)}{t} d t\right),\\
\nonumber\Gamma_{2}&=\sum_{\substack{(\frac{N}{b})^{\frac{4.1001}{13.2}} \leqslant p<(\frac{N}{b})^{\frac{1}{3}} \\ (p, N)=1}} \sum_{\substack{\underline{p}^{\frac{1}{3.675}} \leqslant p_1<\underline{p}^{\frac{1}{2.5}} \\ (p_1, N)=1}}S\left(\mathcal{A}_{p p_1};\mathcal{P}, \underline{p}^{\frac{1}{3.675}}\right)\\
&\geqslant(1+o(1)) \frac{8C(abN) N}{ab(\log N)^{2}}\left(\int_{\frac{4.1001}{13.2}}^{\frac{1}{3}} \frac{d t}{t(1-2 t)}\right)\left(\int_{1.5}^{2.675} \frac{\log \left(2.675-\frac{3.675}{t+1}\right)}{t} d t\right).
\end{align}
By arguments similar to the evaluation of $S_{61}$ we get that
\begin{align}
\nonumber \Gamma_{3}&=\sum_{\substack{(\frac{N}{b})^{\frac{4.1001}{13.2}} \leqslant p<(\frac{N}{b})^{\frac{1}{3}} \\ (p, N)=1}} 
\sum
_{
\substack{\underline{p}^{\frac{1}{3.675}} \leqslant p_1<p_2<p_3<\underline{p}^{\frac{1}{2.5}} \\ (p_1 p_2 p_3, N)=1}
}
S\left(\mathcal{A}_{p p_1 p_2 p_3};\mathcal{P}(p_1), p_2\right)\\
\nonumber &\leqslant (1+o(1))\frac{8C(abN)}{\log N} \sum_{\substack{(\frac{N}{b})^{\frac{4.1001}{13.2}} \leqslant p<(\frac{N}{b})^{\frac{1}{3}} \\ (p, N)=1}} 
\sum_{\substack{\underline{p}^{\frac{1}{3.675}} \leqslant p_1<p_2<p_3<\underline{p}^{\frac{1}{2.5}} \\ (p_1 p_2 p_3, N)=1}}\sum_{\substack{ m\leqslant \frac{N}{bp p_1 p_2 p_3}\\\left(m, p_{1}^{-1} abN P\left(p_{2}\right)\right)=1 }}\frac{\varphi(a)}{\varphi(a^2)}\\
\nonumber &\leqslant (1+o(1))\frac{8C(abN) N}{1.763 ab \log N}\sum_{\substack{(\frac{N}{b})^{\frac{4.1001}{13.2}} \leqslant p<(\frac{N}{b})^{\frac{1}{3}} \\ (p, N)=1}} \frac{1}{p \log \underline{p}} \int_{\frac{1}{3.675}}^{\frac{1}{2.5}} \int_{t_{1}}^{\frac{1}{2.5}} \int_{t_{2}}^{\frac{1}{2.5}} \frac{d t_{1} d t_{2} d t_{3}}{t_{1} t_{2}^{2} t_{3}} \\
&\leqslant(1+o(1)) \frac{16C(abN) N}{1.763ab(\log N)^{2}}\left(\int_{\frac{4.1001}{13.2}}^{\frac{1}{3}} \frac{d t}{t(1-2 t)}\right)\left(6.175 \log \frac{3.675}{2.5}-2.35\right).
\end{align}
By (56)--(59) we have
$$
S_{51}=\sum_{\substack{(\frac{N}{b})^{\frac{4.1001}{13.2}} \leqslant p<(\frac{N}{b})^{\frac{1}{3}} \\ (p, N)=1}} S\left(\mathcal{A}_{p};\mathcal{P},\left(\frac{N}{b}\right)^{\frac{1}{13.2}}\right)
$$
$$
\begin{gathered}
\leqslant(1+o(1)) \frac{8C(abN) N}{ab(\log N)^{2}}\left(\int_{\frac{4.1001}{13.2}}^{\frac{1}{3}} \frac{d t}{t(1-2 t)}\right)\times\\ 
\left(1+\int_{2}^{2.675} \frac{\log (t-1)}{t} d t-\frac{1}{2} \int_{1.5}^{2.675} \frac{\log \left(2.675-\frac{3.675}{t+1}\right)}{t} d t
+\frac{1}{1.763}\left(6.175 \log \frac{3.675}{2.5}-2.35\right)\right).
\end{gathered}
$$
Similarly, we have
$$
S_{52}=\sum_{\substack{(\frac{N}{b})^{\frac{3.6}{13.2}} \leqslant p<(\frac{N}{b})^{\frac{1}{3.604}} \\ (p, N)=1} }S\left(\mathcal{A}_{p};\mathcal{P},\left(\frac{N}{b}\right)^{\frac{1}{8.4}}\right)
$$
$$
\begin{gathered}
\leqslant(1+o(1)) \frac{8C(abN) N}{ab(\log N)^{2}}\left(\int_{\frac{3.6}{13.2}}^{\frac{1}{3.604}} \frac{d t}{t(1-2 t)}\right)\times\\ 
\left(1+\int_{2}^{2.675} \frac{\log (t-1)}{t} d t-\frac{1}{2} \int_{1.5}^{2.675} \frac{\log \left(2.675-\frac{3.675}{t+1}\right)}{t} d t
+\frac{1}{1.763}\left(6.175 \log \frac{3.675}{2.5}-2.35\right)\right)
\end{gathered}
$$
$$
S_{5}=S_{51}+S_{52}
$$
$$
\begin{gathered}
\leqslant(1+o(1)) \frac{8C(abN) N}{ab(\log N)^{2}}\left(\int_{\frac{4.1001}{13.2}}^{\frac{1}{3}} \frac{d t}{t(1-2 t)}+\int_{\frac{3.6}{13.2}}^{\frac{1}{3.604}} \frac{d t}{t(1-2 t)}\right)\times\\ 
\left(1+\int_{2}^{2.675} \frac{\log (t-1)}{t} d t-\frac{1}{2} \int_{1.5}^{2.675} \frac{\log \left(2.675-\frac{3.675}{t+1}\right)}{t} d t
+\frac{1}{1.763}\left(6.175 \log \frac{3.675}{2.5}-2.35\right)\right)
\end{gathered}
$$
\begin{equation}
\leqslant 1.87206 \frac{C(abN) N}{ab(\log N)^{2}}.
\end{equation}

\subsection{Proof of theorem 1.1}
By Lemma~\ref{l32}, (36)--(38), (45)--(46), (55) and (60) we get
$$
S_{1}+S_{2} \geqslant 58.974416 \frac{C(abN) N}{ab(\log N)^{2}},
$$
$$
S_{3}+S_{4}+S_{5}+S_{6}+2S_{7} \leqslant 55.505987 \frac{C(abN) N}{ab(\log N)^{2}},
$$
$$
4R_{a,b}(N) \geqslant (S_{1}+S_{2})-(S_{3}+S_{4}+S_{5}+S_{6}+2S_{7}) \geqslant 3.468429 \frac{C(abN) N}{ab(\log N)^{2}},
$$
$$
R_{a,b}(N) \geqslant 0.8671 \frac{C(abN) N}{ab(\log N)^{2}}.
$$
Theorem~\ref{t1} is proved.

\section{Proof of Theorem 1.2}
In this section, sets $\mathcal{A}_2$, $\mathcal{B}_2$, $\mathcal{C}_2$,
$\mathcal{C}_3$, $\mathcal{E}_2$,
$\mathcal{F}_2$ and $\mathcal{F}_3$ are defined respectively. We define the function $\omega$ as $\omega(p)=0$ for primes $p \mid a b N$ and $\omega(p)=\frac{p}{p-1}$ for other primes.
\subsection{Evaluation of $S_{1}^{\prime}, S_{2}^{\prime}, S_{3}^{\prime}$}
Let $D_{\mathcal{A}_{2}}=\left(\frac{N}{b}\right)^{\theta / 2}\left(\log \left(\frac{N}{b}\right)\right)^{-B}$ for some positive constant $B$. We can take 
\begin{equation}
X_{\mathcal{A}_2}=\sum_{\substack{0 \leqslant k \leqslant b-1 \\(k, b)=1}}\pi\left(\frac{N^\theta}{a} ; b^{2}, N a_{b^{2}}^{-1}+k b\right) \sim \frac{\varphi(b) N^\theta}{a \varphi\left(b^{2}\right) \log N^\theta} \sim \frac{N^\theta}{a b\theta \log N}.
\end{equation}
so that $|\mathcal{A}_2| \sim X_{\mathcal{A}_2}$. By Lemma~\ref{Wfunction} for $z_{\mathcal{A}_2}=\left(\frac{N}{b}\right)^{\frac{1}{\alpha}}$ we have
\begin{equation}
W(z_{\mathcal{A}_2})=\frac{2\alpha e^{-\gamma} C(abN)(1+o(1))}{\log N}.
\end{equation}

To deal with the error terms, for an integer $n$ such that $(n, a b N)>1$, similarly to the discussion for $\eta\left(X_{\mathcal{A}_1}, n\right)$, we have $\eta\left(X_{\mathcal{A}_2}, n\right)=0$.
For a square-free integer $n \leqslant D_{\mathcal{A}_2}$ such that $(n, abN)=1$, to make $n \mid \frac{N-a p}{b}$ for some $\frac{N-a p}{b} \in \mathcal{A}_2$, we need $a p \equiv N(\bmod b n)$, which implies $a p \equiv N+k b n$ $\left(\bmod b^{2} n\right)$ for some $0 \leqslant k \leqslant b-1$. Since $\left(\frac{N-a p}{b n}, b\right)=1$, we can further require $(k, b)=1$. When $k$ runs through the reduced residues modulo $b$, we know $k a_{b^{2} n}^{-1}$ also runs through the reduced residues modulo $b$. Therefore, we have $p \equiv N a_{b^{2} n}^{-1} +k b n\left(\bmod b^{2} n\right)$ for some $0 \leqslant k \leqslant b-1$ such that $(k, b)=1$. Conversely, if $p=N a_{b^{2} n}^{-1} +k b n+m b^{2} n$ for some integer $m$ and some $0 \leqslant k \leqslant b-1$ such that $(k, b)=1$, then $\left(\frac{N-a p}{b n}, b\right)=$ $\left(\frac{N-a a_{b^{2} n}^{-1} N-a k b n-a m b^{2} n}{b n}, b\right)=(-a k, b)=1$. Therefore, for square-free integers $n$ such that $(n, abN)=1$, we have
\begin{align}
\nonumber \left|\eta\left(X_{\mathcal{A}_2}, n\right)\right|  =&\left|\sum_{\substack{a \in \mathcal{A}_2 \\
a \equiv 0(\bmod n)}} 1-\frac{\omega(n)}{n} X_{\mathcal{A}_2}\right| \\
\nonumber=&\left|\sum_{\substack{0 \leqslant k \leqslant b-1 \\
(k, \bar{b})=1}} \pi\left(\frac{N^\theta}{a} ; b^{2} n, N a_{b^{2} n}^{-1} +k b n\right)-\frac{X_{\mathcal{A}_2}}{\varphi(n)}\right| \\
\nonumber =&\left|\sum_{\substack{0 \leqslant k \leqslant b-1 \\
(k, b)=1}}\left( \pi\left(\frac{N^\theta}{a} ; b^{2} n, N a_{b^{2} n}^{-1} +k b n\right)- \frac{\pi\left(\frac{N^\theta}{a} ; b^{2}, N a_{b^{2}}^{-1}+k b\right)}{\varphi(n)}\right)\right|\\
\nonumber \ll&\left|\sum_{\substack{0 \leqslant k \leqslant b-1 \\
(k, b)=1}}\left( \pi\left(\frac{N^\theta}{a} ; b^{2} n, N a_{b^{2} n}^{-1} +k b n\right)-\frac{\pi\left(\frac{N^\theta}{a} ; 1,1\right)}{\varphi\left(b^2 n\right)}\right)\right|\\
\nonumber& +\left|\sum_{\substack{0 \leqslant k \leqslant b-1 \\
(k, b)=1}}\left( \frac{\pi\left(\frac{N^\theta}{a} ; b^{2}, N a_{b^{2}}^{-1} +k b\right)}{\varphi(n)}- \frac{\pi\left(\frac{N^\theta}{a} ; 1,1\right)}{\varphi\left(b^{2} n\right)}\right)\right| \\
\nonumber \ll& \sum_{\substack{0 \leqslant k \leqslant b-1 \\
(k, \bar{b})=1}}\left|\pi\left(\frac{N^\theta}{a} ; b^{2} n, N a_{b^{2} n}^{-1} +k b n\right)-\frac{\pi\left(\frac{N^\theta}{a} ; 1,1\right)}{\varphi\left(b^{2} n\right)}\right| \\
& +\frac{1}{\varphi(n)} \sum_{\substack{0 \leqslant k \leqslant b-1 \\
(k, \bar{b})=1}}\left|\pi\left(\frac{N^\theta}{a} ; b^{2}, N a_{b^{2}}^{-1} +k b\right)-\frac{\pi\left(\frac{N^\theta}{a} ; 1,1\right)}{\varphi\left(b^{2}\right)}\right| .
\end{align}
By Lemma~\ref{l3} with $g(k)=1$ for $k=1$ and $g(k)=0$ for $k>1$, we have
\begin{equation}
\sum_{\substack{n \leqslant D_{\mathcal{A}_2} \\ n \mid P(z_{\mathcal{A}_2})}} \left|\eta\left(X_{\mathcal{A}_2}, n\right)\right| \ll N^\theta(\log N)^{-5}
\end{equation}
and
\begin{equation}
\sum_{p}\sum_{\substack{n \leqslant \frac{D_{\mathcal{A}_{2}}}{p} \\ n \mid P(z_{\mathcal{A}_2})}} \left|\eta\left(X_{\mathcal{A}_2}, pn\right)\right| \ll N^\theta(\log N)^{-5}.
\end{equation}

Then by (61)--(65), Lemma~\ref{l1}, Lemma~\ref{l2} and some routine arguments we have
\begin{align}
\nonumber S_{11}^{\prime} \geqslant& X_{\mathcal{A}_2} W\left(z_{\mathcal{A}_2}\right)\left\{f\left(\frac{\theta/2}{1/14}\right)+O\left(\frac{1}{\log ^{\frac{1}{3}} D_{\mathcal{A}_2}}\right)\right\}-\sum_{\substack{n<D_{\mathcal{A}_2} \\ n \mid P(z_{\mathcal{A}_2})}} \left|\eta\left(X_{\mathcal{A}_2}, n\right)\right|\\
\nonumber \geqslant& \frac{N^\theta}{a b\theta \log N}\frac{2\times 14 e^{-\gamma} C(abN)(1+o(1))}{\log N}\times\\
\nonumber & \left(\frac{2 e^{\gamma}}{\frac{14\theta}{2}}\left(\log (7\theta-1)+\int_{2}^{7\theta-2} \frac{\log (s-1)}{s} \log \frac{7\theta-1}{s+1} d s\right)\right)\\
\nonumber \geqslant& (1+o(1)) \frac{8C(abN) N^\theta}{ab\theta^2 (\log N)^2}\left(\log (7\theta-1)+\int_{2}^{7\theta-2} \frac{\log (s-1)}{s} \log \frac{7\theta-1}{s+1} d s\right)\\
\nonumber \geqslant& 16.70802 \frac{C(abN) N^\theta}{ab (\log N)^2},\\
\nonumber S_{12}^{\prime} \geqslant& X_{\mathcal{A}_2} W\left(z_{\mathcal{A}_2}\right)\left\{f\left(\frac{\theta/2}{1/8.8}\right)+O\left(\frac{1}{\log ^{\frac{1}{3}} D_{\mathcal{A}_2}}\right)\right\}-\sum_{\substack{n<D_{\mathcal{A}_2} \\ n \mid P(z_{\mathcal{A}_2})}} \left|\eta\left(X_{\mathcal{A}_2}, n\right)\right|\\
\nonumber \geqslant& \frac{N^\theta}{a b\theta \log N}\frac{2\times 8.8 e^{-\gamma} C(abN)(1+o(1))}{\log N}\times\\
\nonumber &\left(\frac{2 e^{\gamma}}{\frac{8.8\theta}{2}}\left(\log (4.4\theta-1)+\int_{2}^{4.4\theta-2} \frac{\log (s-1)}{s} \log \frac{4.4\theta-1}{s+1} d s\right)\right)\\
\nonumber \geqslant& (1+o(1)) \frac{8C(abN) N^\theta}{ab\theta^2(\log N)^2}\left(\log (4.4\theta-1)+\int_{2}^{4.4\theta-2} \frac{\log (s-1)}{s} \log \frac{4.4\theta-1}{s+1} d s\right) \\
\nonumber \geqslant& 10.340342 \frac{C(abN) N^\theta}{ab(\log N)^2},\\
S_{1}^{\prime}=&3 S_{11}^{\prime}+S_{12}^{\prime} \geqslant 60.464402 \frac{C(abN) N^\theta}{ab(\log N)^2} .
\end{align}
Similarly, we have
\begin{align}
\nonumber S_{21}^{\prime} \geqslant& \frac{N^\theta}{a b\theta \log N}\frac{2\times 14 e^{-\gamma} C(abN)(1+o(1))}{\log N} \times \\
\nonumber &\sum_{\substack{(\frac{N}{b})^{\frac{1}{14}} \leqslant p_1<p_2<(\frac{N}{b})^{\frac{1}{8.8}} \\ (p_1 p_2, N)=1} }\frac{1}{p_1 p_2} f\left(14\left(\frac{\theta}{2}-\frac{\log p_1 p_2}{\log \frac{N}{b}}\right)\right)\\
\nonumber \geqslant& \frac{N^\theta}{a b\theta \log N}\frac{2\times 14 e^{-\gamma} C(abN)(1+o(1))}{\log N} \times \\
\nonumber &\sum_{\substack{(\frac{N}{b})^{\frac{1}{14}} \leqslant p_1<p_2<(\frac{N}{b})^{\frac{1}{8.8}} \\ (p_1 p_2, N)=1} }\frac{1}{p_1 p_2} \frac{2 e^{\gamma} \log \left(14\left(\frac{\theta}{2}-\frac{\log p_1 p_2}{\log \frac{N}{b}}\right)-1\right)}{14\left(\frac{\theta}{2}-\frac{\log p_1 p_2}{\log \frac{N}{b}}\right)}\\
\nonumber \geqslant&(1+o(1)) \frac{4 C(abN) N^\theta}{ab\theta(\log N)^{2} } \left(\int_{\frac{1}{14}}^{\frac{1}{8.8}} \int_{t_{1}}^{\frac{1}{8.8}} \frac{\log \left((7 \theta-1)-14\left(t_{1}+t_{2}\right)\right)}{t_{1} t_{2}\left(\frac{\theta}{2}-\left(t_{1}+t_{2}\right)\right)} d t_{1} d t_{2}\right)\\
\nonumber \geqslant&(1+o(1)) \frac{8 C(abN) N^\theta}{ab\theta(\log N)^{2} } \left(\int_{\frac{1}{14}}^{\frac{1}{8.8}} \int_{t_{1}}^{\frac{1}{8.8}} \frac{\log \left((7 \theta-1)-14\left(t_{1}+t_{2}\right)\right)}{t_{1} t_{2}\left(\theta-2\left(t_{1}+t_{2}\right)\right)} d t_{1} d t_{2}\right),\\
\nonumber S_{22}^{\prime} \geqslant& \frac{N^\theta}{a b\theta \log N}\frac{2\times 14 e^{-\gamma} C(abN)(1+o(1))}{\log N} \times \\
\nonumber &\sum_{\substack{(\frac{N}{b})^{\frac{1}{14}} \leqslant p_1<(\frac{N}{b})^{\frac{1}{8.8}} \leqslant p_2<(\frac{N}{b})^{\frac{4.5863}{14}}p^{-1}_1 \\ (p_1 p_2, N)=1} }\frac{1}{p_1 p_2} f\left(14\left(\frac{\theta}{2}-\frac{\log p_1 p_2}{\log \frac{N}{b}}\right)\right)\\
\nonumber \geqslant& \frac{N^\theta}{a b\theta \log N}\frac{2\times 14 e^{-\gamma} C(abN)(1+o(1))}{\log N} \times \\
\nonumber &\sum_{\substack{(\frac{N}{b})^{\frac{1}{14}} \leqslant p_1<(\frac{N}{b})^{\frac{1}{8.8}} \leqslant p_2<(\frac{N}{b})^{\frac{4.5863}{14}}p^{-1}_1 \\ (p_1 p_2, N)=1} }\frac{1}{p_1 p_2} \frac{2 e^{\gamma} \log \left(14\left(\frac{\theta}{2}-\frac{\log p_1 p_2}{\log \frac{N}{b}}\right)-1\right)}{14\left(\frac{\theta}{2}-\frac{\log p_1 p_2}{\log \frac{N}{b}}\right)}\\
\nonumber \geqslant&(1+o(1)) \frac{4 C(abN) N^\theta}{ab\theta(\log N)^{2}} \left(\int_{\frac{1}{14}}^{\frac{1}{8.8}} \int_{\frac{1}{8.8}}^{\frac{4.5863}{14}-t_{1}} \frac{\log \left((7 \theta-1)-14\left(t_{1}+t_{2}\right)\right)}{t_{1} t_{2}\left(\frac{\theta}{2}-\left(t_{1}+t_{2}\right)\right)} d t_{1} d t_{2}\right)\\
\nonumber \geqslant&(1+o(1)) \frac{8 C(abN) N^\theta}{ab\theta(\log N)^{2}} \left(\int_{\frac{1}{14}}^{\frac{1}{8.8}} \int_{\frac{1}{8.8}}^{\frac{4.5863}{14}-t_{1}} \frac{\log \left((7 \theta-1)-14\left(t_{1}+t_{2}\right)\right)}{t_{1} t_{2}\left(\theta-2\left(t_{1}+t_{2}\right)\right)} d t_{1} d t_{2}\right),\\
\nonumber S_{2}^{\prime}=&S_{21}^{\prime}+S_{22}^{\prime}\\
\nonumber \geqslant&(1+o(1)) \frac{8 C(abN) N^\theta}{ab\theta(\log N)^{2}} \left(\int_{\frac{1}{14}}^{\frac{1}{8.8}} \int_{t_{1}}^{\frac{4.5863}{14}-t_{1}} \frac{\log \left((7 \theta-1)-14\left(t_{1}+t_{2}\right)\right)}{t_{1} t_{2}\left(\theta-2\left(t_{1}+t_{2}\right)\right)} d t_{1} d t_{2}\right)\\
\geqslant& 5.914688 \frac{C(abN) N^\theta}{ab(\log N)^{2}},
\end{align}
\begin{align}
\nonumber S_{31}^{\prime} \leqslant& \frac{N^\theta}{a b\theta \log N}\frac{2\times 14 e^{-\gamma} C(abN)(1+o(1))}{\log N}\sum_{\substack{(\frac{N}{b})^{\frac{1}{14}} \leqslant p<(\frac{N}{b})^{\frac{4.08631}{14}} \\ (p, N)=1} }\frac{1}{p} F\left(14\left(\frac{\theta}{2}-\frac{\log p}{\log \frac{N}{b}}\right)\right)\\
\nonumber \leqslant& \frac{N^\theta}{a b\theta \log N}\frac{2\times 14 e^{-\gamma} C(abN)(1+o(1))}{\log N} \int_{(\frac{N}{b})^{\frac{1}{14}}}^{(\frac{N}{b})^{\frac{4.08631}{14}}} \frac{1}{u \log u} F\left(14\left(\frac{\theta}{2}-\frac{\log u}{\log \frac{N}{b}}\right)\right) d u \\
\nonumber \leqslant&(1+o(1)) \frac{8 C(abN) N^\theta}{ab\theta^2(\log N)^{2}}\left(\log \frac{4.08631(14\theta-2)}{14\theta-8.17262}\right.\\
\nonumber&\left.+\int_{2}^{7\theta-2} \frac{\log (s-1)}{s} \log \frac{(7\theta-1)(7\theta-1-s)}{s+1} d s\right.\\
\nonumber&\left.+\int_{2}^{7\theta-4} \frac{\log (s-1)}{s} d s \int_{s+2}^{7\theta-2} \frac{1}{t} \log \frac{t-1}{s+1} \log \frac{(7\theta-1)(7\theta-1-t)}{t+1} d t\right)\\
\nonumber\leqslant& 24.63508 \frac{C(abN) N^\theta}{ab(\log N)^{2}},\\
\nonumber S_{32}^{\prime} \leqslant& \frac{N^\theta}{a b\theta \log N}\frac{2\times 14 e^{-\gamma} C(abN)(1+o(1))}{\log N}\sum_{\substack{(\frac{N}{b})^{\frac{1}{14}} \leqslant p<(\frac{N}{b})^{\frac{3.5863}{14}} \\ (p, N)=1} }\frac{1}{p} F\left(14\left(\frac{\theta}{2}-\frac{\log p}{\log \frac{N}{b}}\right)\right)\\
\nonumber \leqslant& \frac{N^\theta}{a b\theta \log N}\frac{2\times 14 e^{-\gamma} C(abN)(1+o(1))}{\log N} \int_{(\frac{N}{b})^{\frac{1}{14}}}^{(\frac{N}{b})^{\frac{3.5863}{14}}} \frac{1}{u \log u} F\left(14\left(\frac{\theta}{2}-\frac{\log u}{\log \frac{N}{b}}\right)\right) d u \\
\nonumber \leqslant&(1+o(1)) \frac{8 C(abN) N^\theta}{ab\theta^2(\log N)^{2}}\left(\log \frac{3.5863(14\theta-2)}{14\theta-7.1726}\right.\\
\nonumber&\left.+\int_{2}^{7\theta-2} \frac{\log (s-1)}{s} \log \frac{(7\theta-1)(7\theta-1-s)}{s+1} d s\right.\\
\nonumber&\left.+\int_{2}^{7\theta-4} \frac{\log (s-1)}{s} d s \int_{s+2}^{7\theta-2} \frac{1}{t} \log \frac{t-1}{s+1} \log \frac{(7\theta-1)(7\theta-1-t)}{t+1} d t\right)\\
\nonumber\leqslant& 21.808021 \frac{C(abN) N^\theta}{ab(\log N)^{2}},\\
S_{3}^{\prime}=& S_{31}^{\prime}+S_{32}^{\prime} \leqslant 46.443101 \frac{C(abN) N^\theta}{ab(\log N)^{2}}.
\end{align}

\subsection{Evaluation of $S_{4}^{\prime}, S_{7}^{\prime}$}
Let $D_{\mathcal{B}_2}=N^{\theta-1 / 2}(\log N)^{-B}$. By Chen's switching principle and similar arguments as in \cite{Cai2002}, we know that
\begin{equation}
|\mathcal{E}_2|<\left(\frac{N}{b}\right)^{\frac{2}{3}}, \quad
\left(\frac{N}{b}\right)^{\frac{1}{3}}<e \leqslant \left(\frac{N}{b}\right)^{\frac{2}{3}} \ \operatorname{for}\ e \in \mathcal{E}_2, \quad
S_{4}^{\prime} \leqslant S\left(\mathcal{B}_2;\mathcal{P},D_{\mathcal{B}_2}^{\frac{1}{2}}\right)+O\left(N^{\frac{2}{3}}\right).
\end{equation}
Then we can take
\begin{gather}
\nonumber X_{\mathcal{B}_2}=\sum_{\substack{(\frac{N}{b})^{\frac{1}{14}} \leqslant p_{1} \leqslant(\frac{N}{b})^{\frac{1}{3.1}}<p_{2} \leqslant(\frac{N}{b p_{1}})^{\frac{1}{2}}  \\ 0 \leqslant j \leqslant a-1,(j, a)=1}} \left(\pi\left(\frac{N}{b p_{1} p_{2}} ; a^{2}, N\left(b p_{1} p_{2}\right)_{a^{2}}^{-1}+j a\right)\right.\\
\left.-\pi\left(\frac{N-N^\theta}{b p_{1} p_{2}} ; a^{2}, N\left(b p_{1} p_{2}\right)_{a^{2}}^{-1}+j a\right)\right)
\end{gather}
so that $|\mathcal{B}_2| \sim X_{\mathcal{B}_2}$. By Lemma~\ref{Wfunction} for $z_{\mathcal{B}_2}=D_{\mathcal{B}_2}^{\frac{1}{2}}=N^{\frac{2\theta-1}{4}}(\log N)^{-B/2}$ we have
\begin{equation}
W(z_{\mathcal{B}_2})=\frac{8 e^{-\gamma} C(abN)(1+o(1))}{(2\theta-1)\log N}, \quad F(2)=e^{\gamma}.
\end{equation}
By Huxley's prime number theorem in short intervals and integeration by parts we get that
\begin{align}
\nonumber X_{\mathcal{B}_2} &=(1+o(1)) \sum_{(\frac{N}{b})^{\frac{1}{14}} \leqslant p_{1} \leqslant(\frac{N}{b})^{\frac{1}{3.1}}<p_{2} \leqslant(\frac{N}{b p_{1}})^{\frac{1}{2}}} \frac{\varphi(a) \frac{N^\theta}{b p_{1} p_{2}}}{\varphi\left(a^{2}\right) \log \left(\frac{N}{b p_{1} p_{2}}\right)} \\
\nonumber & =(1+o(1)) \frac{N^\theta}{a b} \sum_{(\frac{N}{b})^{\frac{1}{14}} \leqslant p_{1} \leqslant(\frac{N}{b})^{\frac{1}{3.1}}<p_{2} \leqslant(\frac{N}{b p_{1}})^{\frac{1}{2}}} \frac{1}{p_{1} p_{2} \log \left(\frac{N}{p_{1} p_{2}}\right)} \\
\nonumber& =(1+o(1)) \frac{N^\theta}{a b} \int_{(\frac{N}{b})^{\frac{1}{14}}}^{(\frac{N}{b})^{\frac{1}{3.1}}} \frac{d t}{t \log t} \int_{(\frac{N}{b})^{\frac{1}{3.1}}}^{(\frac{N}{bt})^{\frac{1}{2}}} \frac{d u}{u \log u \log \left(\frac{N}{u t}\right)}\\
& =(1+o(1)) \frac{N^\theta}{ab\log N} \int_{2.1}^{13} \frac{\log \left(2.1-\frac{3.1}{s+1}\right)}{s} d s .
\end{align}

To deal with the error terms, for an integer $n$ such that $(n, a b N)>1$, similarly to the discussion for $\eta\left(X_{\mathcal{B}_1}, n\right)$, we have $\eta\left(X_{\mathcal{B}_2}, n\right)=0$.
For a square-free integer $n$ such that $(n, a b N)=1$, if $n \mid \frac{N-b p_{1} p_{2} p_{3}}{a}$, then $(p_{1} ,n)=1$ and $(p_{2} , n)=1$. Moreover, if $\left(\frac{N-b p_{1} p_{2} p_{3}}{a n}, a\right)=1$, then we have $b p_{1} p_{2} p_{3} \equiv$ $N+j a n\left(\bmod a^{2} n\right)$ for some $j$ such that $0 \leqslant j \leqslant a-1$ and $(j, a)=1$. Conversely, if $b p_{1} p_{2} p_{3}=N+j a n+s a^{2} n$ for some integer $j$ such that $0 \leqslant j \leqslant a$ and $(j, a)=1$, some integer $n$ relatively prime to $p_{1} p_{2}$ such that $an \mid\left(N-b p_{1} p_{2} p_{3}\right)$, and some integer $s$, then $\left(\frac{N-b p_{1} p_{2} p_{3}}{a n}, a\right)=(-j, a)=1$. Since $j b p_{1} p_{2}$ runs through the reduced residues modulo $a$ when $j$ runs through the reduced residues modulo $a$ and $\pi\left(x ; k, 1,1\right)=\pi\left(\frac{x}{k} ; 1,1\right)$, for square-free integers $n$ such that $(n, a b N)=1$, we have
\begin{align}
\nonumber \left|\eta\left(X_{\mathcal{B}_2}, n\right)\right| =&\left|\sum_{\substack{a \in \mathcal{B}_2 \\
a \equiv 0(\bmod n)}} 1-\frac{\omega(n)}{n} X_{\mathcal{B}_2}\right|=\left|\sum_{\substack{a \in \mathcal{B}_2 \\
a \equiv 0(\bmod n)}} 1-\frac{X_{\mathcal{B}_2}}{\varphi(n)}\right| \\
\nonumber =&\left|\sum_{\substack{(\frac{N}{b})^{\frac{1}{14}} \leqslant p_{1} \leqslant(\frac{N}{b})^{\frac{1}{3.1}}<p_{2} \leqslant(\frac{N}{b p_{1}})^{\frac{1}{2}},(p_1 p_2,N)=1 \\
\left(p_{1} p_{2}, n\right)=1,
0 \leqslant j \leqslant a-1,(j, a)=1}} \left(\pi\left(N ; b p_{1} p_{2}, a^{2} n, N+j a n\right)-\pi\left(N-N^\theta ; b p_{1} p_{2}, a^{2} n, N+j a n\right)\right)\right.\\
\nonumber&\left. -\sum_{\substack{(\frac{N}{b})^{\frac{1}{14}} \leqslant p_{1} \leqslant(\frac{N}{b})^{\frac{1}{3.1}}<p_{2} \leqslant(\frac{N}{b p_{1}})^{\frac{1}{2}} \\
\left(p_{1} p_{2}, n\right)=1,
0 \leqslant j \leqslant a-1,(j, a)=1}} \frac{\left(\pi\left(\frac{N}{b p_{1} p_{2}} ; a^{2}, N\left(b p_{1} p_{2}\right)_{a^{2}}^{-1}+j a\right)-\pi\left(\frac{N-N^\theta}{b p_{1} p_{2}} ; a^{2}, N\left(b p_{1} p_{2}\right)_{a^{2}}^{-1}+j a\right)\right)}{\varphi(n)}\right| \\
\nonumber \ll& \left|\sum_{\substack{(\frac{N}{b})^{\frac{1}{14}} \leqslant p_{1} \leqslant(\frac{N}{b})^{\frac{1}{3.1}}<p_{2} \leqslant(\frac{N}{b p_{1}})^{\frac{1}{2}},(p_1 p_2,N)=1 \\
\left(p_{1} p_{2}, n\right)=1,
0 \leqslant j \leqslant a-1,(j, a)=1}}\left(\left(\pi\left(N ; b p_{1} p_{2}, a^{2} n, N+j a n\right)-\pi\left(N-N^\theta ; b p_{1} p_{2}, a^{2} n, N+j a n\right)\right)\right.\right. \\
\nonumber& \left.\left.-\frac{\left(\pi\left(\frac{N}{b p_{1} p_{2}} ; a^{2}, N\left(b p_{1} p_{2}\right)_{a^{2}}^{-1}+j a\right)-\pi\left(\frac{N-N^\theta}{b p_{1} p_{2}} ; a^{2}, N\left(b p_{1} p_{2}\right)_{a^{2}}^{-1}+j a\right)\right)}{\varphi(n)}\right)\right| \\
\nonumber& +\sum_{\substack{(\frac{N}{b})^{\frac{1}{14}} \leqslant p_{1} \leqslant(\frac{N}{b})^{\frac{1}{3.1}}<p_{2} \leqslant(\frac{N}{b p_{1}})^{\frac{1}{2}} \\
\left(p_{1} p_{2}, nN\right)>1,
0 \leqslant j \leqslant a-1,(j, a)=1}} \frac{\left(\pi\left(\frac{N}{b p_{1} p_{2}} ; a^{2}, N\left(b p_{1} p_{2}\right)_{a^{2}}^{-1}+j a\right)-\pi\left(\frac{N-N^\theta}{b p_{1} p_{2}} ; a^{2}, N\left(b p_{1} p_{2}\right)_{a^{2}}^{-1}+j a\right)\right)}{\varphi(n)}\\
\nonumber \ll&\left|\sum_{\substack{(\frac{N}{b})^{\frac{1}{14}} \leqslant p_{1} \leqslant(\frac{N}{b})^{\frac{1}{3.1}}<p_{2} \leqslant(\frac{N}{b p_{1}})^{\frac{1}{2}},(p_1 p_2,N)=1 \\
\left(p_{1} p_{2}, n\right)=1,
0 \leqslant j \leqslant a-1,(j, a)=1}}\left(\left(\pi\left(N ; b p_{1} p_{2}, a^{2} n, N+j a n\right)-\pi\left(N-N^\theta ; b p_{1} p_{2}, a^{2} n, N+j a n\right)\right)\right.\right.\\
\nonumber&\left.\left.-\frac{\left(\pi\left(N ; b p_{1} p_{2}, 1,1\right)-\pi\left(N-N^\theta ; b p_{1} p_{2}, 1,1\right)\right)}{\varphi\left(a^{2} n\right)}\right)\right| \\
\nonumber& +\left| \sum_{\substack{(\frac{N}{b})^{\frac{1}{14}} \leqslant p_{1} \leqslant(\frac{N}{b})^{\frac{1}{3.1}}<p_{2} \leqslant(\frac{N}{b p_{1}})^{\frac{1}{2}},(p_1 p_2,N)=1 \\
\left(p_{1} p_{2}, n\right)=1,
0 \leqslant j \leqslant a-1,(j, a)=1}}\right.\\
\nonumber&\left.\left(\frac{\left(\pi\left(\frac{N}{b p_{1} p_{2}} ; a^{2}, N\left(b p_{1} p_{2}\right)_{a^{2}}^{-1}+j a\right)-\pi\left(\frac{N-N^\theta}{b p_{1} p_{2}} ; a^{2}, N\left(b p_{1} p_{2}\right)_{a^{2}}^{-1}+j a\right)\right)}{\varphi(n)}\right.\right.\\
\nonumber&\left.\left.-\frac{\left(\pi\left(\frac{N}{b p_{1} p_{2}} ; 1,1\right)-\pi\left(\frac{N-N^\theta}{b p_{1} p_{2}} ; 1,1\right)\right)}{\varphi\left(a^{2} n\right)}\right)\right|+N^{\frac{13}{14}}(\log N)^{2}\\
\nonumber \ll&\left|\sum_{\substack{(\frac{N}{b})^{\frac{1}{14}} \leqslant p_{1} \leqslant(\frac{N}{b})^{\frac{1}{3.1}}<p_{2} \leqslant(\frac{N}{b p_{1}})^{\frac{1}{2}},(p_1 p_2,N)=1 \\
\left(p_{1} p_{2}, n\right)=1,
0 \leqslant j \leqslant a-1,(j, a)=1}}\left(\left(\pi\left(N ; b p_{1} p_{2}, a^{2} n, N+j a n\right)-\pi\left(N-N^\theta ; b p_{1} p_{2}, a^{2} n, N+j a n\right)\right)\right.\right.\\
\nonumber&\left.\left.-\frac{\left(\pi\left(N ; b p_{1} p_{2}, 1,1\right)-\pi\left(N-N^\theta ; b p_{1} p_{2}, 1,1\right)\right)}{\varphi\left(a^{2} n\right)}\right)\right| \\
\nonumber& +\frac{1}{\varphi(n)}\left| \sum_{\substack{(\frac{N}{b})^{\frac{1}{14}} \leqslant p_{1} \leqslant(\frac{N}{b})^{\frac{1}{3.1}}<p_{2} \leqslant(\frac{N}{b p_{1}})^{\frac{1}{2}},(p_1 p_2,N)=1 \\
\left(p_{1} p_{2}, n\right)=1,
0 \leqslant j \leqslant a-1,(j, a)=1}}\right.\\
\nonumber&\left.\left(\left(\pi\left(\frac{N}{b p_{1} p_{2}} ; a^{2}, N\left(b p_{1} p_{2}\right)_{a^{2}}^{-1}+j a\right)-\pi\left(\frac{N-N^\theta}{b p_{1} p_{2}} ; a^{2}, N\left(b p_{1} p_{2}\right)_{a^{2}}^{-1}+j a\right)\right)\right.\right.\\
&\left.\left.-\frac{\left(\pi\left(\frac{N}{b p_{1} p_{2}} ; 1,1\right)-\pi\left(\frac{N-N^\theta}{b p_{1} p_{2}} ; 1,1\right)\right)}{\varphi\left(a^{2} \right)}\right)\right|+N^{\frac{13}{14}}(\log N)^{2}.
\end{align}
By Lemma~\ref{BVshort} with
$$
g(k)= 
\begin{cases}
1, & \text { if } k \in \mathcal{E}_2 \\ 
0, & \text { otherwise }
\end{cases},
$$
we have
\begin{equation}
\sum_{\substack{n \leqslant D_{\mathcal{B}_2} \\ n \mid P(z_{\mathcal{B}_2})}} \left|\eta\left(X_{\mathcal{B}_2}, n\right)\right| \ll N^\theta(\log N)^{-5}.
\end{equation}

Then by (69)--(74) and some routine arguments we have
$$
S_{41}^{\prime} \leqslant (1+o(1)) \frac{8C(abN) N^\theta}{ab(2\theta-1)(\log N)^{2}} \int_{2.1}^{13} \frac{\log \left(2.1-\frac{3.1}{s+1}\right)}{s} d s.
$$
Similarly, we have
$$
S_{42}^{\prime} \leqslant(1+o(1)) \frac{8C(abN) N^\theta}{ab(2\theta-1)(\log N)^{2}}\int_{2.7}^{7.8} \frac{\log \left(2.7-\frac{3.7}{s+1}\right)}{s} d s,
$$
$$
S_{4}^{\prime}=S_{41}^{\prime}+S_{42}^{\prime} \leqslant (1+o(1)) \frac{8C(abN) N^\theta}{ab(2\theta-1)(\log N)^{2}} \left(\int_{2.1}^{13} \frac{\log \left(2.1-\frac{3.1}{s+1}\right)}{s} d s+\int_{2.7}^{7.8} \frac{\log \left(2.7-\frac{3.7}{s+1}\right)}{s} d s\right)
$$
\begin{equation}
\leqslant 13.953531 \frac{C(abN) N^\theta}{ab(\log N)^{2}},
\end{equation}
\begin{align}
\nonumber S_{71}^{\prime} &\leqslant(1+o(1)) \frac{8C(abN) N^\theta}{ab(2\theta-1)(\log N)^{2}}\int_{2}^{2.1} \frac{\log (s-1)}{s} d s\\
\nonumber S_{72}^{\prime} &\leqslant(1+o(1)) \frac{8C(abN) N^\theta}{ab(2\theta-1)(\log N)^{2}}\int_{2}^{2.7} \frac{\log (s-1)}{s} d s,
\end{align}
$$
S_{7}^{\prime}=S_{71}^{\prime}+S_{72}^{\prime} \leqslant(1+o(1)) \frac{8C(abN) N^\theta}{ab(2\theta-1)(\log N)^{2}}\left(\int_{2}^{2.1} \frac{\log (s-1)}{s} d s+\int_{2}^{2.7} \frac{\log (s-1)}{s} d s\right)
$$
\begin{equation}
\leqslant 0.771273 \frac{C(abN) N^\theta}{ab(\log N)^{2}}.
\end{equation}

\subsection{Evaluation of $S_{6}^{\prime}$}
Let $D_{\mathcal{C}_2}=N^{\theta-1 / 2}(\log N)^{-B}$. By Chen's switching principle and similar arguments as in \cite{Cai2015}, we know that
\begin{equation}
S_{61}^{\prime} \leqslant S\left(\mathcal{C}_2;\mathcal{P},D_{\mathcal{C}_2}^{\frac{1}{2}}\right)+O\left(D_{\mathcal{C}_2}^{\frac{1}{2}}\right).
\end{equation}
By Lemma~\ref{Wfunction} for $z_{\mathcal{C}_2}=D_{\mathcal{C}_2}^{\frac{1}{2}}=N^{\frac{2\theta-1}{4}}(\log N)^{-B/2}$ we have
\begin{equation}
W(z_{\mathcal{C}_2})=\frac{8 e^{-\gamma} C(abN)(1+o(1))}{(2\theta-1)\log N}, \quad F(2)=e^{\gamma}.
\end{equation}
By Lemma~\ref{buchstabshort} we have
\begin{align}
\nonumber |\mathcal{C}_2|&=
\sum_{\substack{mp_1 p_2 p_3 p_4 \in \mathcal{F}_2 
\\mp_1 p_2 p_3 p_4 \equiv N b_{a^2}^{-1} +j a (\bmod a^{2}) \\ 0 \leqslant j \leqslant a-1,(j, a)=1 }}1 \\
\nonumber &=\sum_{\substack{(\frac{N}{b})^{\frac{1}{14}} \leqslant p_1 < p_2 < p_3< p_4<(\frac{N}{b})^{\frac{1}{8.8}} \\ (p_1 p_2 p_3 p_4, N)=1} }\sum_{\substack{\frac{N-N^\theta}{bp_1 p_2 p_3 p_4}\leqslant m\leqslant \frac{N}{bp_1 p_2 p_3 p_4}\\\left(m, p_{1}^{-1} abN P\left(p_{2}\right)\right)=1 }}\frac{\varphi(a)}{\varphi(a^2)}\\
\nonumber &<(1+o(1))\frac{N^\theta}{ab}\sum_{(\frac{N}{b})^{\frac{1}{14}} \leqslant p_{1}<p_{2}<p_{3}<p_{4}<(\frac{N}{b})^{\frac{1}{8.8}}} \frac{0.5617}{p_{1} p_{2} p_{3} p_{4} \log p_{2}}\\
&=(1+o(1))\frac{0.5617N^{\theta}}{ab\log N}\int_{\frac{1}{14}}^{\frac{1}{8.8}} \frac{d t_{1}}{t_{1}} \int_{t_{1}}^{\frac{1}{8.8}} \frac{1}{t_{2}}\left(\frac{1}{t_{1}}-\frac{1}{t_{2}}\right) \log \frac{1}{8.8 t_{2}} d t_{2} .
\end{align}

To deal with the error terms, for an integer $n$ such that $(n, a b N)>1$, similarly to the discussion for $\eta\left(X_{\mathcal{C}_1}, n\right)$, we have $\eta\left(|{\mathcal{C}_2}|, n\right)=0$. 
For a square-free integer $n$ that is relatively prime to $a b N$, if $n \mid \frac{N-bm p_{1} p_{2} p_{3} p_4}{a}$, then $(p_{1} ,n)=1, (p_{2} ,n)=1, (p_{3} ,n)=1$ and $(p_{4} , n)=1$. Moreover, if $\left(\frac{N-bm p_{1} p_{2} p_{3} p_4}{a n}, a\right)=1$, then we have $bm p_{1} p_{2} p_{3} p_4 \equiv$ $N+j a n\left(\bmod a^{2} n\right)$ for some $j$ such that $0 \leqslant j \leqslant a-1$ and $(j, a)=1$. Conversely, if $bm p_{1} p_{2} p_{3} p_4=N+j a n+s a^{2} n$ for some integer $j$ such that $0 \leqslant j \leqslant a$ and $(j, a)=1$, some integer $n$ relatively prime to $p_{1} p_{2} p_3 p_4$ such that $an \mid\left(N-bm p_{1} p_{2} p_{3} p_4\right)$, and some integer $s$, then $\left(\frac{N-bm p_{1} p_{2} p_{3} p_4}{a n}, a\right)=(-j, a)=1$. Since $j b_{a^2 n}^{-1}$ runs through the reduced residues modulo $a$ when $j$ runs through the reduced residues modulo $a$, for a square-free integer $n$ relatively prime to $a b N$, we have
\begin{align}
\nonumber \left|\eta\left(|{\mathcal{C}_2}|, n\right)\right| =&\left|\sum_{\substack{a \in \mathcal{C}_2 \\
a \equiv 0(\bmod n)}} 1-\frac{\omega(n)}{n} |\mathcal{C}_2|\right|=\left|\sum_{\substack{a \in \mathcal{C}_2 \\
a \equiv 0(\bmod n)}} 1-\frac{|\mathcal{C}_2|}{\varphi(n)}\right| \\
\nonumber \ll&\left|\sum_{\substack{e \in \mathcal{F}_2 \\ (e,n)=1 \\ e \equiv N b_{a^2 n}^{-1} +j a n (\bmod a^{2} n) \\ 0 \leqslant j \leqslant a-1, (j,a)=1}}1-\frac{1}{\varphi(n)}\sum_{\substack{e \in \mathcal{F}_2 \\ (e,n)=1 \\ e \equiv N b_{a^2}^{-1} +j a  (\bmod a^{2} ) \\ 0 \leqslant j \leqslant a-1, (j,a)=1}}1\right|+\frac{1}{\varphi(n)}\sum_{\substack{e \in \mathcal{F}_2 \\ (e,n)>1 \\ e \equiv N b_{a^2}^{-1} +j a  (\bmod a^{2} ) \\ 0 \leqslant j \leqslant a-1, (j,a)=1}}1\\
\nonumber\ll&\left|\sum_{\substack{e \in \mathcal{F}_2 \\ (e,n)=1 \\ e \equiv N b_{a^2 n}^{-1} +j a n (\bmod a^{2} n) \\ 0 \leqslant j \leqslant a-1, (j,a)=1}}1-\frac{1}{\varphi(a^2 n)}\sum_{\substack{e \in \mathcal{F}_2 \\ (e,n)=1}}1\right|\\
\nonumber&+\left|\frac{1}{\varphi(n)}\sum_{\substack{e \in \mathcal{F}_2 \\ (e,n)=1 \\ e \equiv N b_{a^2}^{-1} +j a  (\bmod a^{2} ) \\ 0 \leqslant j \leqslant a-1, (j,a)=1}}1-\frac{1}{\varphi(a^2 n)}\sum_{\substack{e \in \mathcal{F}_2 \\ (e,n)=1}}1\right|+N^{\theta-\frac{1}{14}}(\log N)^{2}\\
\nonumber\ll&\left|\sum_{\substack{e \in \mathcal{F}_2 \\ (e,n)=1 \\ e \equiv N b_{a^2 n}^{-1} +j a n (\bmod a^{2} n) \\ 0 \leqslant j \leqslant a-1, (j,a)=1}}1-\frac{1}{\varphi(a^2 n)}\sum_{\substack{e \in \mathcal{F}_2 \\ (e,n)=1}}1\right|\\
&+\frac{1}{\varphi(n)}\left|\sum_{\substack{e \in \mathcal{F}_2 \\ (e,n)=1 \\ e \equiv N b_{a^2}^{-1} +j a  (\bmod a^{2} ) \\ 0 \leqslant j \leqslant a-1, (j,a)=1}}1-\frac{1}{\varphi(a^2 )}\sum_{\substack{e \in \mathcal{F}_2 \\ (e,n)=1}}1\right|+N^{\theta-\frac{1}{14}}(\log N)^{2}.
\end{align}
By Lemma~\ref{newmeanvalue}, we have
\begin{equation}
\sum_{\substack{n \leqslant D_{\mathcal{C}_2} \\ n \mid P(z_{\mathcal{C}_2})}} \left|\eta\left(|{\mathcal{C}_2}|, n\right)\right| \ll N^\theta(\log N)^{-5}.
\end{equation}

Then by (77)--(81) and some routine arguments we have
\begin{align}
\nonumber S_{61}^{\prime} &\leqslant(1+o(1)) \frac{0.5617 \times 8 C(abN) N^\theta}{ab(2\theta-1)(\log N)^2} \int_{\frac{1}{14}}^{\frac{1}{8.8}} \frac{d t_{1}}{t_{1}} \int_{t_{1}}^{\frac{1}{8.8}} \frac{1}{t_{2}}\left(\frac{1}{t_{1}}-\frac{1}{t_{2}}\right) \log \frac{1}{8.8 t_{2}} d t_{2} \\
& \leqslant 0.115227 \frac{C(abN) N^\theta}{ab(\log N)^{2}}.
\end{align}
Similarly, we have
\begin{align}
\nonumber S_{62}^{\prime}=& \sum_{(\frac{N}{b})^{\frac{1}{14}} \leqslant p_{1}<p_{2}<p_{3}<(\frac{N}{b})^{ \frac{1}{8.8}} \leqslant p_{4}<(\frac{N}{b})^{\frac{1.8}{8.8}} } S\left(\mathcal{A}_{p_{1} p_{2} p_{3} p_{4}} ; \mathcal{P}\left(p_{1}\right), p_{2}\right)\\
\nonumber &+\sum_{(\frac{N}{b})^{\frac{1}{14}} \leqslant p_{1}<p_{2}<p_{3}<(\frac{N}{b})^{ \frac{1}{8.8}}<(\frac{N}{b})^{ \frac{1.8}{8.8}} \leqslant p_{4}<(\frac{N}{b})^{\frac{4.5863}{14}}p^{-1}_3 } S\left(\mathcal{A}_{p_{1} p_{2} p_{3} p_{4}} ; \mathcal{P}\left(p_{1}\right), p_{2}\right)\\
\nonumber \leqslant& (1+o(1)) \frac{0.5617 \times 8 C(abN) N^\theta}{ab(2\theta-1)(\log N)^2} \left(22.8 \log \frac{14}{8.8}-10.4\right) \log 1.8 \\
\nonumber &+(1+o(1)) \frac{0.5644 \times 8 C(abN) N^\theta}{ab(2\theta-1)(\log N)^2} \int_{\frac{1}{14}}^{\frac{1}{8.8}} \frac{d t_{1}}{t_{1}} \int_{t_{1}}^{\frac{1}{8.8}} \frac{1}{t_{2}}\left(\frac{1}{t_{1}}-\frac{1}{t_{2}}\right) \log \left(\frac{8.8}{1.8}\left(\frac{4.5863}{14}-t_2\right)\right) d t_{2}\\
\leqslant& 0.654234 \frac{C(abN) N^\theta}{ab(\log N)^{2}}.
\end{align}
By (82) and (83) we have
\begin{align}
\nonumber S_{6}^{\prime}=S_{61}^{\prime}+S_{62}^{\prime} &\leqslant 0.115227 \frac{C(abN) N^\theta}{ab(\log N)^{2}} +0.654234 \frac{C(abN) N^\theta}{ab(\log N)^{2}}\\
&\leqslant 0.769461 \frac{C(abN) N^\theta}{ab(\log N)^{2}} .
\end{align}

\subsection{Evaluation of $S_{5}^{\prime}$} For $p \geqslant \left(\frac{N}{b}\right)^{\frac{4.08631}{14}}$ we have
$$
\underline{p}^{\prime\frac{1}{2.5}} \leqslant \left(\frac{N}{b}\right)^{\frac{1}{14}}, \quad S\left(\mathcal{A}_{p};\mathcal{P}, \left(\frac{N}{b}\right)^{\frac{1}{14}}\right) \leqslant S\left(\mathcal{A}_{p};\mathcal{P}, \underline{p}^{\prime\frac{1}{2.5}}\right).
$$
By Lemma~\ref{l35} we have
\begin{align}
\nonumber S_{51}^{\prime}&=\sum_{\substack{(\frac{N}{b})^{\frac{4.08631}{14}} \leqslant p<(\frac{N}{b})^{\frac{1}{3.1}} \\ (p, N)=1}} S\left(\mathcal{A}_{p};\mathcal{P},\left(\frac{N}{b}\right)^{\frac{1}{14}}\right)\\
&\leqslant \sum_{\substack{(\frac{N}{b})^{\frac{4.08631}{14}} \leqslant p<(\frac{N}{b})^{\frac{1}{3.1}} \\ (p, N)=1}} S\left(\mathcal{A}_{p};\mathcal{P},\underline{p}^{\prime\frac{1}{2.5}}\right)
\leqslant \Gamma_{1}^{\prime}-\frac{1}{2} \Gamma_{2}^{\prime}+\frac{1}{2} \Gamma_{3}^{\prime}+O\left(N^{\theta-\frac{1}{20}}\right) .
\end{align}
By Lemmas~\ref{l1}, ~\ref{l2}, ~\ref{Wfunction}, ~\ref{l3} and some routine arguments we get
\begin{align}
\nonumber\Gamma_{1}^{\prime}&=\sum_{\substack{(\frac{N}{b})^{\frac{4.08631}{14}} \leqslant p<(\frac{N}{b})^{\frac{1}{3.1}} \\ (p, N)=1}} S\left(\mathcal{A}_{p};\mathcal{P},\underline{p}^{\prime\frac{1}{3.675}}\right)\\
&\leqslant(1+o(1)) \frac{8C(abN) N^\theta}{ab\theta(\log N)^{2}}\left(\int_{\frac{4.08631}{14}}^{\frac{1}{3.1}} \frac{d t}{t(\theta-2 t)}\right)\left(1+\int_{2}^{2.675} \frac{\log (t-1)}{t} d t\right),\\
\nonumber\Gamma_{2}^{\prime}&=\sum_{\substack{(\frac{N}{b})^{\frac{4.08631}{14}} \leqslant p<(\frac{N}{b})^{\frac{1}{3.1}} \\ (p, N)=1}} \sum_{\substack{\underline{p}^{\prime\frac{1}{3.675}} \leqslant p_1<\underline{p}^{\prime\frac{1}{2.5}} \\ (p_1, N)=1}}S\left(\mathcal{A}_{p p_1};\mathcal{P}, \underline{p}^{\prime\frac{1}{3.675}}\right)\\
&\geqslant(1+o(1)) \frac{8C(abN) N^\theta}{ab\theta(\log N)^{2}}\left(\int_{\frac{4.08631}{14}}^{\frac{1}{3.1}} \frac{d t}{t(\theta-2 t)}\right)\left(\int_{1.5}^{2.675} \frac{\log \left(2.675-\frac{3.675}{t+1}\right)}{t} d t\right).
\end{align}
By arguments similar to the evaluation of $S_{8}$ in \cite{CL2011} we get that
\begin{align}
\nonumber \Gamma_{3}^{\prime}&=\sum_{\substack{(\frac{N}{b})^{\frac{4.08631}{14}} \leqslant p<(\frac{N}{b})^{\frac{1}{3.1}} \\ (p, N)=1}} 
\sum
_{
\substack{\underline{p}^{\prime\frac{1}{3.675}} \leqslant p_1<p_2<p_3<\underline{p}^{\prime\frac{1}{2.5}} \\ (p_1 p_2 p_3, N)=1}
}
S\left(\mathcal{A}_{p p_1 p_2 p_3};\mathcal{P}(p_1), p_2\right)\\
\nonumber &\leqslant (1+o(1))\frac{8C(abN)}{(2\theta-1)\log N} \sum_{\substack{(\frac{N}{b})^{\frac{4.08631}{14}} \leqslant p<(\frac{N}{b})^{\frac{1}{3.1}} \\ (p, N)=1}} 
\sum_{\substack{\underline{p}^{\frac{1}{3.675}} \leqslant p_1<p_2<p_3<\underline{p}^{\frac{1}{2.5}} \\ (p_1 p_2 p_3, N)=1}}\sum_{\substack{ m\leqslant \frac{N}{bp p_1 p_2 p_3}\\\left(m, p_{1}^{-1} abN P\left(p_{2}\right)\right)=1 }}\frac{\varphi(a)}{\varphi(a^2)}\\
\nonumber &\leqslant (1+o(1))\frac{8C(abN) N^\theta}{1.763(2\theta-1) ab \log N}\sum_{\substack{(\frac{N}{b})^{\frac{4.08631}{14}} \leqslant p<(\frac{N}{b})^{\frac{1}{3.1}} \\ (p, N)=1}} \frac{1}{p \log \underline{p}} \int_{\frac{1}{3.675}}^{\frac{1}{2.5}} \int_{t_{1}}^{\frac{1}{2.5}} \int_{t_{2}}^{\frac{1}{2.5}} \frac{d t_{1} d t_{2} d t_{3}}{t_{1} t_{2}^{2} t_{3}} \\
&\leqslant(1+o(1)) \frac{16C(abN) N^\theta}{1.763ab(2\theta-1)(\log N)^{2}}\left(\int_{\frac{4.08631}{14}}^{\frac{1}{3.1}} \frac{d t}{t(\theta-2 t)}\right)\left(6.175 \log \frac{3.675}{2.5}-2.35\right).
\end{align}
By (85)--(88) we have
$$
S_{51}^{\prime}=\sum_{\substack{(\frac{N}{b})^{\frac{4.08631}{14}} \leqslant p<(\frac{N}{b})^{\frac{1}{3.1}} \\ (p, N)=1}} S\left(\mathcal{A}_{p};\mathcal{P},\left(\frac{N}{b}\right)^{\frac{1}{14}}\right)
$$
$$
\begin{gathered}
\leqslant(1+o(1)) \frac{8C(abN) N^\theta}{ab\theta(\log N)^{2}}\left(\int_{\frac{4.08631}{14}}^{\frac{1}{3.1}} \frac{d t}{t(\theta-2 t)}\right)\times\\ 
\left(1+\int_{2}^{2.675} \frac{\log (t-1)}{t} d t-\frac{1}{2} \int_{1.5}^{2.675} \frac{\log \left(2.675-\frac{3.675}{t+1}\right)}{t} d t
+\frac{\theta}{1.763(2\theta-1)}\left(6.175 \log \frac{3.675}{2.5}-2.35\right)\right).
\end{gathered}
$$
Similarly, we have
$$
S_{52}^{\prime}=\sum_{\substack{(\frac{N}{b})^{\frac{3.5863}{14}} \leqslant p<(\frac{N}{b})^{\frac{1}{3.7}} \\ (p, N)=1} }S\left(\mathcal{A}_{p};\mathcal{P},\left(\frac{N}{b}\right)^{\frac{1}{8.8}}\right)
$$
$$
\begin{gathered}
\leqslant(1+o(1)) \frac{8C(abN) N^\theta}{ab\theta(\log N)^{2}}\left(\int_{\frac{3.5863}{14}}^{\frac{1}{3.7}} \frac{d t}{t(\theta-2 t)}\right)\times\\
\left(1+\int_{2}^{2.675} \frac{\log (t-1)}{t} d t-\frac{1}{2} \int_{1.5}^{2.675} \frac{\log \left(2.675-\frac{3.675}{t+1}\right)}{t} d t
+\frac{\theta}{1.763(2\theta-1)}\left(6.175 \log \frac{3.675}{2.5}-2.35\right)\right)
\end{gathered}
$$
$$
S_{5}^{\prime}=S_{51}^{\prime}+S_{52}^{\prime}
$$
$$
\begin{gathered}
\leqslant(1+o(1)) \frac{8C(abN) N^\theta}{ab\theta(\log N)^{2}}\left(\int_{\frac{4.08631}{14}}^{\frac{1}{3.1}} \frac{d t}{t(\theta-2 t)}+\int_{\frac{3.5863}{14}}^{\frac{1}{3.7}} \frac{d t}{t(\theta-2 t)}\right)\times\\ 
\left(1+\int_{2}^{2.675} \frac{\log (t-1)}{t} d t-\frac{1}{2} \int_{1.5}^{2.675} \frac{\log \left(2.675-\frac{3.675}{t+1}\right)}{t} d t
+\frac{\theta}{1.763(2\theta-1)}\left(6.175 \log \frac{3.675}{2.5}-2.35\right)\right)
\end{gathered}
$$
\begin{equation}
\leqslant 3.669999 \frac{C(abN) N^\theta}{ab(\log N)^{2}}.
\end{equation}

\subsection{Proof of theorem 1.2}
By (66)--(68), (75)--(76), (84) and (89) we get
$$
S_{1}^{\prime}+S_{2}^{\prime} \geqslant 66.37909 \frac{C(abN) N^\theta}{ab(\log N)^{2}},
$$
$$
S_{3}^{\prime}+S_{4}^{\prime}+S_{5}^{\prime}+S_{6}^{\prime}+2S_{7}^{\prime} \leqslant 66.378638 \frac{C(abN) N^\theta}{ab(\log N)^{2}},
$$
$$
4R_{a,b}^{\theta}(N) \geqslant (S_{1}^{\prime}+S_{2}^{\prime})-(S_{3}^{\prime}+S_{4}^{\prime}+S_{5}^{\prime}+S_{6}^{\prime}+2S_{7}^{\prime}) \geqslant 0.000452 \frac{C(abN) N^\theta}{ab(\log N)^{2}},
$$
$$
R_{a,b}^{\theta}(N) \geqslant 0.000113 \frac{C(abN) N^\theta}{ab(\log N)^{2}}.
$$
Theorem~\ref{t2} is proved.

\section{An outline of the proof of Theorems 1.3--1.8}
The proof of Theorems \ref{t3}--\ref{t5} is similar and even simpler than the proof of Theorems \ref{t1}--\ref{t2}. 

For Theorem~\ref{t3}, we only need Lemma~\ref{BVshort} and Remark~\ref{remark2} to deal with the sieve error terms involved instead of Lemma~\ref{newmeanvalue} (i.e. $\frac{5*0.97-3}{2}=0.925>\frac{12.2}{13.2}$). For example, let $D_{\mathcal{A}_{3}}=\left(\frac{N}{b}\right)^{0.97-1 / 2}\left(\log \left(\frac{N}{b}\right)\right)^{-B}$ and by Huxley's prime number theorem in short intervals, we can take  
\begin{align}
\nonumber X_{\mathcal{A}_3}=&\sum_{\substack{0 \leqslant k \leqslant b-1 \\(k, b)=1}}\left(\pi\left(\frac{N/2+N^{0.97}}{a} ; b^{2}, N a_{b^{2}}^{-1}+k b\right)-\pi\left(\frac{N/2-N^{0.97}}{a} ; b^{2}, N a_{b^{2}}^{-1}+k b\right)\right) \\
\sim & \frac{\varphi(b)\left(\pi\left(\frac{N/2+N^{0.97}}{a}\right)-\pi\left(\frac{N/2-N^{0.97}}{a}\right)\right)}{ \varphi\left(b^{2}\right)} \sim \frac{2N^{0.97}}{ab \log N}
\end{align}
and we can construct the sets $\mathcal{B}$, $\mathcal{C}$, $\mathcal{E}$ and $\mathcal{F}$ for Theorem~\ref{t3} similar to those of Theorem~\ref{t1} and \cite{CL1999}.

The proof of Theorems~\ref{t4}--\ref{t41} is very similar to that of Theorem~\ref{t1}. For example, let $D_{\mathcal{A}_{4}}=\left(\frac{N}{b}\right)^{1 / 2}\left(\log \left(\frac{N}{b}\right)\right)^{-B}$, we can take
\begin{equation}
X_{\mathcal{A}_4} \sim \frac{1}{\varphi(c)}X_{\mathcal{A}_1} \sim \frac{N}{\varphi(c) a b \log N}.
\end{equation}
We can construct the sets $\mathcal{B}$, $\mathcal{C}$, $\mathcal{E}$ and $\mathcal{F}$ for Theorems~\ref{t4}--\ref{t41} similar to those of Theorem~\ref{t1}. The infinite set of primes used in the proof of Theorems~\ref{t4}--\ref{t43} is $\mathcal{P}^{\prime}=\{p : (p, Nc)=1\}$, so by using the similar arguments to those of Lemma~\ref{Wfunction}, for $j=4,5,6$ we have
\begin{equation}
W^{\prime}(z_{\mathcal{A}_{j}})=\prod_{\substack{p<z \\ (p,Nc)=1}}\left(1-\frac{\omega(p)}{p}\right)=\prod_{\substack{p \mid c \\ p \nmid N\\ p>2}} \left(\frac{p-1}{p-2}\right) \frac{2\alpha e^{-\gamma} C(abN)(1+o(1))}{\log N}.
\end{equation}
To deal with the error terms involved, we need to modify our Lemmas~\ref{l3}--\ref{remark1}. We can do that by using the similar arguments to those of Kan and Shan's paper \cite{KanShan} and we refer the interested readers to check it.
For Theorem~\ref{t41}, we need Lemma~\ref{almostBV} to control the sieve error terms with "large" $c$.

The proof of Theorems~\ref{t42}--\ref{t43} is like a combination of the proof of Theorems~\ref{t2}--\ref{t3} and Theorem~\ref{t4}. For example, let $D_{\mathcal{A}_{5}}=\left(\frac{N}{b}\right)^{\theta / 2}\left(\log \left(\frac{N}{b}\right)\right)^{-B}$ and $D_{\mathcal{A}_{6}}=\left(\frac{N}{b}\right)^{0.97- 1/ 2}\left(\log \left(\frac{N}{b}\right)\right)^{-B}$, we can take
\begin{equation}
X_{\mathcal{A}_5} \sim \frac{1}{\varphi(c)}X_{\mathcal{A}_2} \sim \frac{N^\theta}{\varphi(c) a b \theta \log N} \quad \text{and} \quad X_{\mathcal{A}_6} \sim \frac{1}{\varphi(c)}X_{\mathcal{A}_3} \sim \frac{2N^{0.97}}{\varphi(c) a b \log N}.
\end{equation}
We can construct the sets $\mathcal{B}$, $\mathcal{C}$, $\mathcal{E}$ and $\mathcal{F}$ for Theorems~\ref{t42}--\ref{t43} similar to those of Theorem~\ref{t2} and \cite{CL1999}. To deal with the sieve error terms involved, we also need to modify our Lemmas~\ref{BVshort}--\ref{newmeanvalue} by using the similar arguments to those of \cite{KanShan}. 
Our Lemmas~\ref{almostBV}--\ref{almostnewmean} will help us if we want to combine Theorems~\ref{t2}--\ref{t3} with Theorem~\ref{t41} and get similar results to Theorems~\ref{t42}--\ref{t43} with "large" $c$.

Finally, in order to prove Theorem~\ref{t5}, we need Lemma~\ref{upperboundsieve} to give an upper bound. Then we can treat $\Upsilon_1$ and $\Upsilon_2$ by arguments involved in evaluation of $S_{1}, S_{2}, S_{3}$, and $\Upsilon_3$ by similar arguments involved in evaluation of $S_{6}$.

\section*{Acknowledgements} 
The author would like to thank Huixi Li and Guang--Liang Zhou for providing information about the papers \cite{LiHuixi2019} \cite{LIHUIXI} \cite{RossPhD} and for some helpful discussions.

\bibliographystyle{plain}
\bibliography{bib}
\end{document}